\documentclass[a4paper,11pt,reqno]{amsart}

\usepackage[T1]{fontenc}
\usepackage{lmodern}
\usepackage{microtype}
\usepackage[a4paper,left=24mm,right=24mm,top=22mm,bottom=24mm]{geometry}
\usepackage{mathtools,amssymb,amsthm}
\usepackage{xcolor}
\colorlet{leanblue}{blue!55!black}
\usepackage{tikz}
\usepackage{quiver}
\usepackage[shortlabels]{enumitem}
\usepackage{needspace}
\usepackage{listings}
\lstdefinelanguage{GAP}{
  morekeywords={for,in,do,od,if,then,fi,and,or,not,rec},
  sensitive=true,
  morecomment=[l]{\#},
  morestring=[b]"
}
\lstdefinestyle{gapcode}{
  language=GAP,
  basicstyle=\ttfamily\footnotesize,
  keywordstyle=\bfseries\color{leanblue},
  commentstyle=\itshape\color{black!55},
  stringstyle=\color{black!75},
  numbers=left,
  numberstyle=\tiny\color{black!45},
  numbersep=8pt,
  frame=single,
  rulecolor=\color{black!25},
  framesep=5pt,
  xleftmargin=18pt,
  xrightmargin=5pt,
  columns=fullflexible,
  keepspaces=true,
  showstringspaces=false,
  breaklines=true,
  captionpos=b,
  aboveskip=.5\baselineskip,
  belowskip=.5\baselineskip
}
\usepackage{hyperref}

\newcommand{\Z}{\mathbb{Z}}
\newcommand{\U}{\operatorname{U}}
\newcommand{\Sym}{\operatorname{Sym}}
\newcommand{\gr}{\operatorname{gr}}
\newcommand{\ab}{\operatorname{ab}}

\newcommand{\presentationlangle}{%
  \mathopen{\vcenter{\hbox{%
    \begin{tikzpicture}[x=1ex,y=1ex,baseline=(current bounding box.center)]
      \draw[line width=.45pt] (2,17.5) -- (0,0) -- (2,-17.5);
    \end{tikzpicture}%
  }}}%
}
\newcommand{\presentationrangle}{%
  \mathclose{\vcenter{\hbox{%
    \begin{tikzpicture}[x=1ex,y=1ex,baseline=(current bounding box.center)]
      \draw[line width=.45pt] (0,17.5) -- (2,0) -- (0,-17.5);
    \end{tikzpicture}%
  }}}%
}

\newtheorem{maintheorem}{Theorem}

\newtheorem{maincorollary}[maintheorem]{Corollary}

\numberwithin{equation}{section}
\newtheorem{Theorem}[equation]{Theorem}
\newtheorem{Corollary}[equation]{Corollary}
\newtheorem{Lemma}[equation]{Lemma}
\newtheorem{Proposition}[equation]{Proposition}

\theoremstyle{definition}
\newtheorem{Example}[equation]{Example}

\newcommand{\leanrepository}{%
  https://github.com/TimmPeterson/lie-algebras/blob/main}
\newcommand{\leanproof}[1]{%
  \leavevmode\unskip\nobreak\hfill
  \mbox{\href{#1}{\normalfont\sffamily\bfseries\footnotesize
    \textcolor{leanblue}{Lean 4}}}%
  \par\nobreak\noindent\ignorespaces
}

\title{Dimension subrings of Lie rings}
\author{Vasily Ionin}
\address{Saint Petersburg Department of Steklov Mathematical Institute\\
27 Fontanka, St. Petersburg, 191023, Russia}
\email{ionin.code@gmail.com}
\author{Roman Mikhailov}
\address{Saint Petersburg State University\\
7--9 Universitetskaya Embankment, St. Petersburg, 199034, Russia}
\author{Tima Petrov}
\address{Saint Petersburg State University\\
7--9 Universitetskaya Embankment, St. Petersburg, 199034, Russia}
\email{timapetrov9@gmail.com}
\date{}

\hypersetup{
  colorlinks=true,
  allcolors=blue!55!black,
  pdftitle={Dimension subrings of Lie rings},
  pdfauthor={Vasily Ionin, Roman Mikhailov, and Tima Petrov}
}

\subjclass[2020]{17B35, 17B30, 20C05}
\keywords{Lie ring, dimension subring, lower central series, universal
enveloping algebra, metabelian Lie ring}

\begin{document}

\begin{abstract}
We show that, unlike the lower central series, the dimension series of a Lie
ring need not stabilize when two consecutive terms coincide; in fact,
arbitrarily long finite plateaux occur.  We prove that
$D_n(L)/\gamma_{n+1}(L)$ is central in $L/\gamma_{n+1}(L)$ for every
$n\geq1$, in contrast to the group case.  If $c\geq2$ and $L$ is metabelian and nilpotent of class at
most $c$, we prove that $D_{2c-1}(L)=0$.  In particular,
our results imply that $D_5(L)\subseteq\gamma_4(L)$ for every Lie ring $L$.
\end{abstract}

\maketitle

\section{Introduction}

Let $G$ be a group.  Its lower central series is defined by
\[
 \gamma_1(G)=G,
 \qquad
 \gamma_{n+1}(G)=[\gamma_n(G),G].
\]
Let $\Delta(G)$ denote the augmentation ideal of the integral group ring
$\Z[G]$.  The dimension subgroups of $G$ are
\[
 D_n(G)=G\cap\bigl(1+\Delta(G)^n\bigr)
       =\{g\in G\mid g-1\in\Delta(G)^n\}.
\]
Induction gives
$\gamma_n(G)\subseteq D_n(G)$ for every $n$.  Equality holds for
$n\leq3$, but not in general: Rips constructed a group with
$D_4(G)\neq\gamma_4(G)$~\cite{Rips1972}.  For free groups, equality in all
degrees goes back to Magnus~\cite{Magnus1937}; Witt established the
corresponding result for free Lie rings~\cite{Witt1937}.

Describing dimension subgroups, and understanding how they differ from the
lower central series, is a difficult and fundamental problem in group theory.
The possible discrepancy is remarkably rich: every abelian group of bounded
exponent embeds in a dimension quotient $D_n(G)/\gamma_n(G)$ of a suitable
group.  The construction realizes this torsion through homotopy groups of
spheres~\cite{BartholdiMikhailov2023}.  We refer
to~\cite{Passi1979,MikhailovPassi2009} for the classical dimension-subgroup
problem and its subsequent development.

There is a parallel theory for Lie rings.  For a Lie ring $L$, let $\U(L)$
be its universal enveloping algebra and let
$\varpi(L)$ be the kernel of the
augmentation $\U(L)\to\Z$.  The lower central series and the dimension
series of $L$ are
\[
 \gamma_1(L)=L,
 \qquad
 \gamma_{n+1}(L)=[\gamma_n(L),L],
 \qquad
 D_n(L)=L\cap\varpi(L)^n.
\]
Lie dimension quotients can also contain elements of every prime order:
for each prime $p$, there exist a Lie ring $L$ and an index $n$ for
which $D_n(L)/\gamma_n(L)$ contains a subgroup isomorphic to $\Z/p\Z$.
The topological input has a striking parallel form:
just as homotopy groups of spheres are essential ingredients in constructing
higher torsion in group dimension quotients, the initial pages of unstable
spectral sequences for spheres play the same role for Lie
rings~\cite{BartholdiMikhailov2023}.

For Lie rings over $\Z$, we identify $L$ with its image in $\U(L)$ by
the integral Birkhoff--Witt theorem.  As in the group case,
\[
                     \gamma_n(L)\subseteq D_n(L).
\]
Equality $D_n(L)=\gamma_n(L)$ holds for $n\leq3$, while a Lie ring analogue
of the Rips example shows that it can fail in degree
four~\cite{BartholdiPassi2015}.

The subject also has important homological and categorical aspects.
Dimension quotients are closely connected with derived functors of
non-additive functors~\cite{HartlMikhailovPassi2007}.  More recently, they
have been described as boundary limits over categories of free
presentations, both for groups and for Lie rings~\cite{Mikhailov2026}.

\subsection{Failure of stabilization}

Once it became clear that the dimension series need not coincide with the
lower central series, a broader circle of questions emerged: which structural
features do the two filtrations share, and where do they fundamentally
diverge?  One difference is already visible at the level of universal
algebra.  For each $n$, the groups satisfying $\gamma_n(G)=1$ form a variety
defined by the single identity
\[
                         [x_1,\ldots,x_n]=1.
\]
By contrast, the groups satisfying $D_4(G)=1$ form a quasivariety that is not
finitely based; equivalently, this class cannot be axiomatized by finitely many
quasi-identities~\cite{MikhailovPassi2006}.

We exhibit another fundamental difference.  The lower central series has a
rigid stabilization property: once
$\gamma_m(L)=\gamma_{m+1}(L)$, all subsequent terms equal $\gamma_m(L)$.
The dimension series behaves differently.  An arbitrarily long string of
equal consecutive terms need not mark the point of stabilization.

\Needspace{9\baselineskip}
\begin{maintheorem}\label{thm:A}
\leanproof{\leanrepository/LieRings/Publication/Results.lean\#L32-L42}
For every integer $N\geq1$, there exist an integer $m=m(N)\geq1$ and a Lie
ring $L_N$ such that
\[
 D_m(L_N)=D_{m+1}(L_N)=\cdots=D_{m+N}(L_N)
 \neq D_{m+N+1}(L_N).
\]
\end{maintheorem}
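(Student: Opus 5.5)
The plan is to build $L_N$ from a Lie ring with nontrivial dimension quotient, the Lie analogue of the Rips example \cite{BartholdiPassi2015}, in which some $x\in D_4(L)\setminus\gamma_4(L)$. The key observation is that a Lie ring annihilated by a prime $p$ in low degree and high degree separately can have dimension terms that "jump". A cleaner route is a direct sum. Since $\U(A\oplus B)\cong\U(A)\otimes\U(B)$ and $\varpi(A\oplus B)^n=\sum_{i+j=n}\varpi(A)^i\otimes\varpi(B)^j$ (with $\varpi^0=\U$), intersecting with $A\oplus B$ gives
\[
D_n(A\oplus B)=D_n(A)\oplus D_n(B).
\]
This identity needs a short check using the Birkhoff--Witt splitting, which is available because the underlying groups are free or at least the PBW filtration splits. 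So plateaux cannot be produced by summing; they must come from a single ring.

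I will therefore take $L$ nilpotent of class $c$ with $D_{c+1}(L)\ne0$, and compare with the lower central series. For $k> c$ we have $\gamma_k(L)=0$, while $D_k(L)$ is a decreasing chain of subgroups of the finite torsion subgroup detected by the Rips-type construction. The plan is to grade the construction: choose $L$ with generators in weights and an element $x$ of weight $w$ lying in $\varpi^{m+N}$ but not $\varpi^{m+N+1}$. Concretely, I would start from $L_0$ with $x\in D_4(L_0)\setminus D_5(L_0)$ and $x$ central of order $p$. Then I replace each generator $a$ of $L_0$ by a product structure: I adjoin new elements so that $a$ becomes a commutator of length $k$ in a free nilpotent extension. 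Formally, let $L_N$ be obtained by substituting for each generator a basic commutator $[y_1,\dots,y_k]$ in new free generators, taking the Lie ring presented by the relations of $L_0$ evaluated on these commutators. In the associated graded picture, every element then has weight multiplied by $k$. So $x$ lies in $\varpi^{4k}$ but not in $\varpi^{5k}$ once we verify the filtration behaves like a weight grading, and the elements below $x$ in the filtration are all in $\gamma$-terms that are zero or constant in the window.

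The key steps, in order, are as follows. First, establish the direct-sum and grading lemmas for $\varpi^n$. Second, construct $L_N$ as the weight-$k$ substitution with $k\ge N+1$. Third, show that $D_n(L_N)=\langle x\rangle\oplus\gamma_n(L_N)$ for $n$ in the window $[4k-N',5k]$, using that $\gamma_n(L_N)$ is constant there; this constancy holds because there are no commutators of weight strictly between multiples of $k$ surviving the relations. I plan to force this by also quotienting by all brackets of intermediate weight that are not multiples of $k$. Fourth, show that $x\notin D_{5k}$, or at least fails at some point, giving the strict inclusion at the end of the plateau, with $m$ chosen as the first index of the window.

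The main obstacle is the lower bound: proving that $x\notin\varpi^{M}$ for the exact $M=m+N+1$. For this I would try to construct an explicit graded quotient, namely a homomorphism from $\U(L_N)$ to a truncated graded algebra (for example $\U(L_0)$ with degrees rescaled, reduced mod $p$), in which $x$ has a nonzero image of known degree. This would also need the Lean-checkable computation of $\varpi(L_0)^5$ not containing $x$, which I expect to be where a specific small example is used.
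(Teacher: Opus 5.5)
There is a genuine gap. Your plan rests on the claim that after the substitution $a_i\mapsto X_i=[y_{i,1},\dots,y_{i,k}]$ the $\varpi$-adic filtration behaves like a weight grading rescaled by $k$, and this is exactly what fails. If a Lie ring is graded, generated in degree one, with homogeneous relations, then $\varpi^n=\bigoplus_{d\geq n}\U_d$, and hence $D_n=\gamma_n$ for every $n$. Dimension phenomena exist only because relators such as $4x_1+2c_3+c_2$ are inhomogeneous, and then no grading controls $\varpi(L_N)^n$. Functoriality of $L_0\to L_N$ gives you only the lower bound that the image of $x$ lies in $\varpi(L_N)^{4k}$. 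Even then you must check that this image is nonzero, since $L_0\to L_N$ need not be injective.

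Your Step 3 also cannot be carried out. Quotients are by ideals, so killing all brackets of a weight $w$ kills all of $\gamma_w(L_N)$; for $w=2<k$ it kills the $X_i$ themselves. Weights cannot be skipped, since $\gamma_j=\gamma_{j+1}$ forces $\gamma_n=\gamma_j$ for all $n\geq j$. Your window is moreover inconsistent: you ask for $D_n=\langle x\rangle\oplus\gamma_n$ up to $n=5k$ and simultaneously $x\notin D_{5k}$. Finally, since $x\in D_2(L_0)=\gamma_2(L_0)$ and every $X_i$ lies in $L_N'$, the image of $x$ lies in $L_N''$. So $L_N$ is not metabelian, and imposing metabelianity would kill $x$.

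What your plan lacks are the two estimates that make up the proof. One is an upper bound $D_m(L_N)\subseteq\langle x\rangle$ at an index $m$ above the nilpotency class. The other is the membership $x\in\varpi^{m+N}$. The paper gets both by lengthening only the part of the relators that produces the dimension element: $t=[x_4,x_5,\dots,x_5]$ and $c_i=[t,x_i]\in\gamma_{N+2}$, while $x_1,x_2,x_3$ keep weight one. Thus $L_N$ stays metabelian of class $N+3$.

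The identity $a=32b_{12}+64b_{13}+128b_{23}=c_1c_3-c_2^2$ in $\U(L_N)$ gives $a\in\varpi^{2N+4}$. The Passi--Sicking reduction $D_n\subseteq\gamma_n+\langle e_j[x_j,x_i]\rangle$, together with $2D_n\subseteq\gamma_n$, gives $D_{N+4}(L_N)=\langle a\rangle\cong\Z/2\Z$, where $a=32u_3$. Hence all terms from $N+4$ to $2N+4$ equal $\langle a\rangle$. The drop $a\notin\varpi^{2N+5}$ is proved with an explicit additive functional $\lambda\colon\U(T)\to\Z/256\Z$ on a PBW basis of a central extension $T$. Alternatively, Theorem~\ref{thm:C} gives $D_{2N+5}(L_N)=0$ directly.

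Your proposed detector for the drop also points the wrong way. The natural map goes $L_0\to L_N$, so there is no evident homomorphism $\U(L_N)\to\U(L_0)$ to use for it.
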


An explicit construction of $L_N$ is given in
\S~\ref{sec:plateaux}.  In this construction, $m(N)=N+4$, and $L_N$
is metabelian and nilpotent of class $N+3$.

In \S~\ref{subsec:group-plateau}, we also construct a group $G$ satisfying
$D_5(G)=\cdots=D_{5+N}(G)\neq D_{5+N+1}(G)$ for some integer $N\geq1$.

\subsection{Centrality of dimension subrings}

The commutator identities also differ in the two settings.
For every group $G$ and every integer $n\geq1$, the quotient
$D_n(G)/\gamma_{n+1}(G)$ is abelian~\cite{GuptaKuzmin1992}; equivalently,
\[
                    [D_n(G),D_n(G)]\subseteq\gamma_{n+1}(G).
\]
The corresponding abelianity statement holds for every Lie ring
$L$~\cite{BartholdiPassi2015}.

Since $D_3(G)=\gamma_3(G)$, one has
\[
                         [D_3(G),G]=\gamma_4(G).
\]
In degree four, one has the further identity
\[
                         [D_4(G),G]=\gamma_5(G)
\]
for every group $G$~\cite{GuptaKuzmin1992}.  This pattern does not continue:
in general, $D_n(G)/\gamma_{n+1}(G)$ need not lie in the center of
$G/\gamma_{n+1}(G)$~\cite{GuptaKuzmin1995}.  For metabelian Lie rings,
the identity
\[
                         [D_n(L),L]=\gamma_{n+1}(L)
\]
holds in every degree~\cite{PassiSicking2017}.  The following theorem removes
the metabelian hypothesis.

\begin{maintheorem}\label{thm:B}
\leanproof{\leanrepository/LieRings/Publication/Results.lean\#L44-L55}
Let $L$ be a Lie ring.  For all integers $m,n\geq1$,
\[
                 [D_n(L),D_m(L)]\subseteq\gamma_{n+m}(L).
\]
In particular,
\([D_n(L),L]=\gamma_{n+1}(L)\).
\end{maintheorem}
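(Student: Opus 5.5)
The plan is to reduce modulo $\gamma_{n+m}(L)$ and compare the augmentation filtration with the filtration of $\U(L)$ by \emph{weight} coming from the lower central series. Since $D_k$ is functorial and $D_k(L)$ maps into $D_k(L/\gamma_{n+m}(L))$, it suffices to treat the case $\gamma_{n+m}(L)=0$ and show $[x,y]=0$ for $x\in D_n(L)$, $y\in D_m(L)$. For each $k\geq0$, let $\U_k\subseteq \U(L)$ be the additive span of all products $c_1\cdots c_r$ with $c_j\in\gamma_{i_j}(L)$ and $i_1+\cdots+i_r\geq k$. This is a two-sided ideal filtration with $\U_k\U_l\subseteq \U_{k+l}$ and $\varpi(L)^k\subseteq \U_k$. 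It also satisfies the Leibniz rule: if $z\in\gamma_i(L)$, then $[z,\U_k]\subseteq\U_{k+i}$. This holds because commuting $z$ with a factor $c_j\in\gamma_{i_j}$ produces $[z,c_j]\in\gamma_{i_j+i}$.

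The first step is $[D_n(L),L]\subseteq \U_{n+1}$. Write $x\in\varpi^n$ as a sum of products $a_1\cdots a_n$ with $a_j\in L$; this is possible since $\varpi(L)$ is generated as an ideal by $L$. For $b\in L$, the derivation rule gives $[a_1\cdots a_n,b]=\sum_j a_1\cdots[a_j,b]\cdots a_n$. Each term has weight $\geq (n-1)+2=n+1$. The second step uses the same Leibniz expansion on the other side. Write $y=\sum b_1\cdots b_m$ with $b_j\in L$; then $[x,b_1\cdots b_m]=\sum_j b_1\cdots[x,b_j]\cdots b_m$. Each $[x,b_j]$ lies in $\U_{n+1}$ by step one, and the remaining $m-1$ factors contribute weight $\geq m-1$. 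Hence $[x,y]\in L\cap\U_{n+m}$, and this part is purely formal.

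The third step, which I expect to be the main obstacle, is the identification
\[
  L\cap \U_{k}=\gamma_{k}(L)\qquad\text{for all }k .
\]
Under the reduction $\gamma_{n+m}(L)=0$ this gives $[x,y]=0$ for $k=n+m$. My approach is to compare $\gr\U(L)=\bigoplus_k\U_k/\U_{k+1}$ with $\U(\gr L)$, where $\gr L=\bigoplus_i\gamma_i(L)/\gamma_{i+1}(L)$ is the associated graded Lie ring. There is an evident surjection $\U(\gr L)\to\gr\U(L)$ of graded rings, sending a class in $\gamma_i/\gamma_{i+1}$ to the class of a representative in $\U_i/\U_{i+1}$. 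I would try to build an inverse filtered map by constructing, over $\Z$, a PBW-type description of $\U(L)$ adapted to the filtration. The idea is to choose the PBW filtration by weights and use the integral Birkhoff--Witt theorem, assumed earlier, to identify both sides with the weighted symmetric algebra on $\gr L$. One then reads off that $\gamma_k/\gamma_{k+1}\to\U_k/\U_{k+1}$ is injective.

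Given that injectivity, an element of $L\cap\U_k$ lying in $\gamma_i\setminus\gamma_{i+1}$ with $i<k$ would have nonzero image in $\U_i/\U_{i+1}$. This contradicts membership in $\U_k\subseteq\U_{i+1}$, so $L\cap\U_k=\gamma_k(L)$. The delicate point is torsion in $\gr L$, where integral PBW arguments can break. If the direct comparison fails there, I would fall back on an explicit faithful filtered module. One option is to let $L$ act on $\U(\gr L)$, or on a suitable weighted symmetric algebra, so that $\U_k$ acts by raising degree by $k$ while elements of $\gamma_i\setminus\gamma_{i+1}$ act nontrivially in degree exactly $i$.

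Finally, the ``in particular'' follows by taking $m=1$, which gives $[D_n(L),L]\subseteq\gamma_{n+1}(L)$. The reverse inclusion holds because $\gamma_n(L)\subseteq D_n(L)$.
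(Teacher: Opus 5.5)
Steps one and two are correct but too weak, and step three is false, so the argument cannot be completed.

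\textbf{Why step three fails.} Since $\varpi(L)^k\subseteq\U_k$, you have
\[
D_k(L)=L\cap\varpi(L)^k\subseteq L\cap\U_k .
\]
The identity $L\cap\U_k=\gamma_k(L)$ would therefore give $D_k(L)=\gamma_k(L)$ for every Lie ring and every $k$. That contradicts the Bartholdi--Passi example in degree four.

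Your reduction to $\gamma_{n+m}(L)=0$ does not help. The paper's ring $L_N$ has $\gamma_{N+4}(L_N)=0$, yet $0\neq a\in D_{N+4}(L_N)\subseteq L_N\cap\U_{N+4}$. Since $a=32u_3\in\gamma_{N+3}(L_N)$, this element has nonzero class in $\gamma_{N+3}/\gamma_{N+4}$ but zero image in $\U_{N+3}/\U_{N+4}$. So the map $\gamma_k/\gamma_{k+1}\to\U_k/\U_{k+1}$ is genuinely non-injective over $\Z$. No integral identification of $\gr\U(L)$ with $\U(\gr L)$ or a weighted symmetric algebra exists, and no faithful filtered module can rescue a false identity.

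Steps one and two only place $[x,y]$ in $L\cap\U_{n+m}$, which can strictly contain $\gamma_{n+m}(L)$. The information is lost as soon as you measure the bracket by its weight inside $\U(L)$. Your derivation of the ``in particular'' part is the case $m=1$ of the same argument, so it inherits the gap.

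\textbf{The missing idea.} Evaluate the bracket through the adjoint action, not through membership in a filtration of $\U(L)$. For an ideal $A$ of $L$, the Jacobi identity shows that $a\cdot(x_1\cdots x_r)=[a,x_1,\ldots,x_r]$ defines a right $\U(L)$-module structure on $A$, with $A\cdot\varpi(L)^r=[A,{}_rL]$. Then for $a\in A$ and $y\in D_r(L)$ you get directly $[a,y]=a\cdot y\in[A,{}_rL]$.

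\begin{itemize}
\item Taking $A=L$ gives $[D_r(L),L]\subseteq\gamma_{r+1}(L)$. With $\gamma_r(L)\subseteq D_r(L)$, this proves the ``in particular'' part independently.
\item $D_n(L)$ is an ideal, being the preimage of a two-sided ideal. Taking $A=D_n(L)$ and $r=m$ gives
\[
[D_n(L),D_m(L)]\subseteq[D_n(L),{}_mL]=[\gamma_{n+1}(L),{}_{m-1}L]=\gamma_{n+m}(L).
\]
\end{itemize}

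This is the paper's proof. It never needs to decide whether an element of $L$ of high $\U(L)$-weight lies deep in the lower central series, which is exactly the question on which dimension subrings and $\gamma_k$ diverge.
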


\subsection{Plotkin's problems for metabelian Lie rings}

In 1973, Plotkin formulated two problems that remain
open~\cite{Plotkin1973}.  For a group $G$, put
\[
 D_\omega(G)=\bigcap_{n\geq1}D_n(G),
 \qquad
 \gamma_\omega(G)=\bigcap_{n\geq1}\gamma_n(G).
\]
The problems ask:
\begin{enumerate}[(i)]
 \item Is $D_\omega(G)=\gamma_\omega(G)$ for every group $G$?
 \item For every positive integer $c$, is there an integer $m=m(c)$ such that
 $D_m(G)=1$ for every nilpotent group $G$ of class at most $c$?
\end{enumerate}
The analogous questions for Lie rings remain open in general.  We
resolve both Lie-ring analogues for metabelian Lie rings.  The key result is
the following uniform vanishing bound.

\begin{maintheorem}\label{thm:C}
\leanproof{\leanrepository/LieRings/Publication/Results.lean\#L57-L65}
Let $c\geq2$ be an integer, and let $L$ be a metabelian Lie ring of nilpotency class at most $c$. Then
\(D_{2c-1}(L)=0\).
\end{maintheorem}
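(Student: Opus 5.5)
We aim to show that every $x\in L\cap\varpi(L)^{2c-1}$ vanishes, arguing by induction on $c$ and using Theorem~\ref{thm:B} to absorb the inductive information.

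\emph{Setup.} Put $A=[L,L]$, which is abelian since $L$ is metabelian, and $Q=L/A$. Then $\U(L)$ is filtered by powers of $\varpi(L)$. The structure we exploit is that $A$ is a module over the commutative ring $\Sym(Q)$. Indeed, $\operatorname{ad}$ of elements of $L$ acting on $A$ commute modulo the action of $A$, and $A$ acts trivially on $A$. Nilpotency of class at most $c$ says that $A\cdot I^{c-1}=0$, where $I$ is the augmentation ideal of $\Sym(Q)$.

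\emph{Induction on $c$.} For $c=2$ the claim is $D_3(L)=0$. This follows from $D_3=\gamma_3$ (quoted in the introduction) together with $\gamma_3(L)=0$.

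For the inductive step, let $L$ have class $c$ and pass to $\bar L=L/\gamma_c(L)$, which has class $c-1$. By induction $D_{2c-3}(\bar L)=0$. Since $D_n$ is functorial, this gives $D_{2c-1}(L)\subseteq D_{2c-3}(L)\subseteq\gamma_c(L)$. The element $x$ therefore lies in $\gamma_c(L)$, which is central because $\gamma_{c+1}(L)=0$.

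\emph{The key step.} It remains to show that a central element $x\in\gamma_c(L)$ lying in $\varpi(L)^{2c-1}$ is zero. I would try to construct a homomorphism of abelian groups $\varphi\colon\varpi(L)^{2c-1}\to M$ that is injective (or at least nonzero on every nonzero element) on $\gamma_c(L)$. I would do this in two steps.
\begin{enumerate}[(a)]
\item First write $x$ as a sum of products of $\ge 2c-1$ elements of $L$. Using the PBW basis and the metabelian relations, normalize each such product to have the form (elements of $A$)$\cdot$(elements of a lift of $Q$).
\item Then count weight. Each factor from $A$ carries weight at least $2$ in the lower central filtration. A product with $k$ factors from $A$ and $j$ factors from $Q$ has $k+j\ge 2c-1$. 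So either $k\ge 2$, which produces a genuinely nonlinear term, or $k\le1$, in which case the term has $Q$-degree $j\ge 2c-2\ge c$.
\end{enumerate}
The bound $2c-1$ is presumably exactly what forces every surviving linear term to have $Q$-degree at least $c-1$ past the weight of its $A$-factor. Such terms act trivially on $A$ via $A\cdot I^{c-1}=0$, so their linear component in $L$ vanishes. Concretely, I would take $M$ to be the degree-one part of the symmetrization $\U(L)\to \Sym$-type associated graded, and project onto $L\subset\U(L)$ using the integral PBW splitting of the linear part.

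\emph{Main obstacle.} The hard part is the integral bookkeeping. Symmetrizing products over $\Z$ introduces denominators, and rewriting $q_1\cdots q_j$ with $j\ge 2c-2$ in normal form produces commutator terms $[q_i,q_{i'}]\in A$ with binomial coefficients. One must check that all resulting linear contributions land in $A\cdot I^{c-1}=0$ rather than in torsion that survives. I would first verify this through a direct formula for the linear component of products $a\,q_1\cdots q_j$ in $\U(L)$ via iterated $\operatorname{ad}$, using the commutativity of the $\operatorname{ad}_q$ on $A$ to write these as divided-power-free expressions. If this fails, I would instead use Theorem~\ref{thm:B} iteratively, in the form $[D_{c-1}(L),D_c(L)]\subseteq\gamma_{2c-1}(L)=0$, to split $x$ into products of lower dimension-subring elements.
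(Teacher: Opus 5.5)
\textbf{Verdict: the proposal has a genuine gap in the key step; it never deals with torsion in $L_{\ab}$, which is where all the difficulty lies.}

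Your reduction to $x\in\gamma_c(L)\subseteq Z(L)$ is correct, and centrality alone also follows from Theorem~\ref{thm:B}. But the reduction leaves the whole difficulty in place. The Lie ring $L_N$ of Theorem~\ref{thm:A} has class $c=N+3$, and its nonzero subring $D_{2c-2}(L_N)=\langle a\rangle$ already lies in $\gamma_c(L_N)$.

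The concrete failure is as follows. When $L_{\ab}$ has torsion, $L\to Q$ has no additive section. So the products ``(elements of $A$)$\cdot$(lifts of $Q$)'' do not form a PBW basis, and no projection onto $L$ kills every product with $k\geq2$. In fact such products can equal nonzero elements of $L$. By \eqref{eq:a-factorization}, $a=c_1c_3-c_2^2$ in $\U(L_N)$, with each $c_i\in\gamma_{N+2}(L_N)\subseteq A$.

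Relatedly, step (b) does not use the exponent $2c-1$ in any essential way. Count weight correctly: the rewriting $xy=yx+[x,y]$ lowers the number of factors, so the invariant is total weight, not $k+j$. Then the same reasoning would put the linear component of every element of $\varpi(L)^n$ in $\gamma_n(L)$. That would prove $D_n(L)=\gamma_n(L)$ for all metabelian $L$, contradicting $D_{N+4}(L_N)\neq0=\gamma_{N+4}(L_N)$.

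Three further problems:
\begin{itemize}
\item The appeal to $A\cdot I^{c-1}=0$ confuses the adjoint action with multiplication in $\U(L)$. The vanishing of $[a,q_1,\ldots,q_{c-1}]$ does not make the monomial $aq_1\cdots q_j$ vanish.
\item The denominators you worry about never appear, since ordered-monomial rewriting is integral. The real obstruction is the torsion of $L_{\ab}$. If $L_{\ab}$ is free, listing generators of $A$ first in a PBW basis already makes your weight count rigorous.
\item The fallback via Theorem~\ref{thm:B} bounds brackets of dimension subrings but gives no way to factor an element of $D_{2c-1}(L)$, so it does not help.
\end{itemize}

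What is missing is control of the quadratic layer, which is exactly where $2c-1$ enters: each product in $\gamma_i\cdot\gamma_j$ with $i+j\geq 2c-1$ has a factor in $\gamma_c\subseteq Z$. The paper proceeds in four steps.
\begin{enumerate}
\item It removes the torsion by passing to the central extension $T=L\times_{L_{\ab}}P$ with $P$ free abelian. Then $T\cong T'\oplus P$ additively and $\gamma_i(T)\cong\gamma_i(L)$ for $i\geq2$.
\item With a PBW basis listing generators of $T'$ before $p_1,\ldots,p_m$, the projection $p\colon\U(T)\to P_{\leq2}\U(T)$ satisfies $p(\varpi(T)^{2c-1})\subseteq W_{2c-1}(T)\subseteq Z(T)\cdot T$.
\item The torsion of $L_{\ab}$ survives only in $K=\langle d_ip_i+b_i\rangle$. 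One has $p(K\cdot\U(T))\subseteq K+Z(T)\cdot T$ because $d_i[p_i,x]\in Z(T)$ for $x<p_i$.
\item An additive retraction $r\colon P_{\leq2}\U(T)\to T$ vanishing on $Z(T)\cdot T$ is built from a biadditive $H$ on $T/Z(T)$ with $H(\bar x,\bar y)-H(\bar y,\bar x)=[x,y]$. It gives $\widetilde x=r(p(\widetilde x))\in K$ for any lift $\widetilde x$ of $x$, so $x=0$.
\end{enumerate}
Projecting onto the linear PBW component alone, as you suggest, would not suffice even in $T$. That projection leaves the terms $d_i[p_i,x]$ coming from $K\cdot T$, which are generally nonzero and not in $K$. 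The order-independent quadratic retraction is what absorbs them. Neither the splitting device nor such a retraction appears in your plan, and without them step (b) cannot be carried out.
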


This answers the second of Plotkin's questions in the metabelian Lie setting.
It also shows that the construction in Theorem~\ref{thm:A} is sharp with
respect to the nilpotency class: $L_N$ has class $N+3$ and
$D_{2N+4}(L_N)\neq0$, whereas every metabelian Lie ring $L$ of class at
most $N+3$ satisfies \(D_{2N+5}(L)=0\).

\begin{maincorollary}\label{cor:D}
\leanproof{\leanrepository/LieRings/Publication/Results.lean\#L67-L77}
Let $c \ge 2$ be an integer, and let $L$ be a Lie ring. Then
\(D_{2c-1}(L)\subseteq\gamma_{c+1}(L)+L''\).
In particular, $D_5(L)\subseteq\gamma_4(L)$.
\end{maincorollary}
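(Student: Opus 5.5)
The plan is to deduce Corollary~\ref{cor:D} from Theorem~\ref{thm:C} by passing to a quotient, using functoriality of the dimension series. Put $I=\gamma_{c+1}(L)+L''$. This is an ideal of $L$, since both summands are ideals. Let $\bar L=L/I$ and let $\pi\colon L\to\bar L$ be the projection. The quotient $\bar L$ is metabelian, because $\bar L''=\pi(L'')=0$. It is nilpotent of class at most $c$, because $\gamma_{c+1}(\bar L)=\pi(\gamma_{c+1}(L))=0$. Theorem~\ref{thm:C} therefore gives $D_{2c-1}(\bar L)=0$.

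Next I use naturality. The surjection $\pi$ induces a ring homomorphism $\U(\pi)\colon\U(L)\to\U(\bar L)$ compatible with the augmentations. Hence it maps $\varpi(L)$ into $\varpi(\bar L)$, and so maps $\varpi(L)^n$ into $\varpi(\bar L)^n$. Because it restricts to $\pi$ on $L$, it sends $D_n(L)=L\cap\varpi(L)^n$ into $\bar L\cap\varpi(\bar L)^n=D_n(\bar L)$. The only point needing care is the identification of $L$ and $\bar L$ with their images in the enveloping algebras. This is the convention fixed in the introduction via the integral Birkhoff--Witt theorem, and it is compatible with $\U(\pi)$ because $\U(\pi)$ restricted to $L$ is $\pi$. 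Thus $\pi(D_{2c-1}(L))\subseteq D_{2c-1}(\bar L)=0$, so $D_{2c-1}(L)\subseteq\ker\pi=I$. This proves the first claim.

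For the particular case, take $c=3$ to get $D_5(L)\subseteq\gamma_4(L)+L''$. It remains to absorb $L''$. Since $L'=\gamma_2(L)$, Theorem~\ref{thm:B} with $n=m=2$ (or simply the standard inclusion $[\gamma_i,\gamma_j]\subseteq\gamma_{i+j}$) gives $L''=[\gamma_2(L),\gamma_2(L)]\subseteq\gamma_4(L)$. Hence $D_5(L)\subseteq\gamma_4(L)$. No step is a real obstacle: the substance lies entirely in Theorem~\ref{thm:C}, and the only thing to check here is the functoriality of $D_n$ under surjections.
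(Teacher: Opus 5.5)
Your proposal is correct and is essentially the paper's own proof. The paper likewise passes to the quotient $L/(\gamma_{c+1}(L)+L'')$, which is metabelian of class at most $c$, applies Theorem~\ref{thm:C} together with functoriality of $D_n$, and for $c=3$ uses $L''\subseteq\gamma_4(L)$ to absorb $L''$.
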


The corresponding question for groups remains open: is
$D_5(G)\subseteq\gamma_4(G)$ for every group $G$?
See~\cite[Problem~20.84]{KourovkaNotebook21}.

The first of Plotkin's questions also has an affirmative answer in the
metabelian Lie setting.  For a Lie ring $L$, define
\[
 D_\omega(L)=\bigcap_{n\geq1}D_n(L),
 \qquad
 \gamma_\omega(L)=\bigcap_{n\geq1}\gamma_n(L).
\]

\begin{maincorollary}\label{cor:E}
\leanproof{\leanrepository/LieRings/Publication/Results.lean\#L79-L94}
Let $L$ be a metabelian Lie ring. Then
\[
              D_{2c-1}(L)\subseteq\gamma_{c+1}(L),
              \qquad c\geq2.
\]
In particular, $D_\omega(L)=\gamma_\omega(L)$.
\end{maincorollary}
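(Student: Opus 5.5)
We deduce Corollary~\ref{cor:E} from Theorem~\ref{thm:C} by passing to a quotient, together with the functoriality of dimension subrings. Fix $c\geq2$ and put $\bar L=L/\gamma_{c+1}(L)$. This quotient is again metabelian, since a quotient of a metabelian Lie ring is metabelian, and it is nilpotent of class at most $c$, because $\gamma_{c+1}(\bar L)$ is the image of $\gamma_{c+1}(L)$ and hence vanishes. Theorem~\ref{thm:C} therefore gives $D_{2c-1}(\bar L)=0$.

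Next we check that dimension subrings behave well under surjections. Let $\pi\colon L\to\bar L$ be the projection. It induces a surjective ring map $\U(L)\to\U(\bar L)$ compatible with the augmentations, so it sends $\varpi(L)$ into $\varpi(\bar L)$. Consequently it sends $\varpi(L)^n$ into $\varpi(\bar L)^n$, and hence $\pi(D_n(L))\subseteq D_n(\bar L)$. Here we use the embedding of $L$ and $\bar L$ into their enveloping algebras via the integral Birkhoff--Witt theorem, as the paper does. Taking $n=2c-1$ yields $\pi(D_{2c-1}(L))=0$, that is, $D_{2c-1}(L)\subseteq\ker\pi=\gamma_{c+1}(L)$.

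For the statement about $D_\omega$, one inclusion is the general fact $\gamma_n(L)\subseteq D_n(L)$, which gives $\gamma_\omega(L)\subseteq D_\omega(L)$. For the other, take any $k\geq3$ and set $c=k-1\geq2$. Then
\[
D_\omega(L)\subseteq D_{2c-1}(L)\subseteq\gamma_{c+1}(L)=\gamma_k(L).
\]
The cases $k=1,2$ are trivial, since $\gamma_1(L)\supseteq\gamma_2(L)\supseteq\gamma_3(L)$. Intersecting over all $k$ gives $D_\omega(L)\subseteq\gamma_\omega(L)$.

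No step here is a real obstacle: all the depth is in Theorem~\ref{thm:C}. The only point needing care is the functoriality $\pi(D_n(L))\subseteq D_n(\bar L)$. It follows directly from the fact that $\U$ is a functor from Lie rings to augmented rings, together with the identification of each Lie ring with its image in its enveloping algebra.
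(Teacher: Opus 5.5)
Your proposal is correct and follows essentially the paper's route. The paper first proves $D_{2c-1}(L)\subseteq\gamma_{c+1}(L)+L''$ for an arbitrary Lie ring by passing to the quotient $L/(\gamma_{c+1}(L)+L'')$ and applying Theorem~\ref{thm:C} with functoriality; it then specializes to $L''=0$ and derives $D_\omega(L)=\gamma_\omega(L)$ by the same chain of inclusions you use.
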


\subsection{Organization}

In \S~\ref{sec:preliminaries}, we record the integral PBW theorem and the
filtrations used below.  In \S\S~\ref{sec:plateaux} and~\ref{sec:centrality},
we prove Theorems~\ref{thm:A} and~\ref{thm:B}, respectively.
In \S~\ref{sec:relative-vanishing}, we establish a vanishing criterion for
$D_n(L)$, which we apply in \S~\ref{sec:metabelian_lie_rings} to prove
Theorem~\ref{thm:C}.  In \S~\ref{sec:formalization}, we present the Lean
formalization and the GAP computation.

\medskip
The authors thank L.~Bartholdi for helpful discussions of questions related
to this paper.

\subsection{AI use statement}

We used ChatGPT-6 Astra to simplify proofs, generate the Lean~4 formalization,
and assist with text editing.  The authors are responsible for the final
manuscript.

\section{Preliminaries}\label{sec:preliminaries}

\subsection{Notation}

A Lie ring is a Lie algebra over~\(\Z\).
All commutators are left-normed:
\[
 [x_1,x_2,\ldots,x_r]=[[\cdots[[x_1,x_2],x_3],\ldots],x_r].
\]
Let $L$ be a Lie ring. For additive subgroups $B_1,\ldots,B_r$ of $L$, the notation
$[B_1,\ldots,B_r]$ denotes the subgroup generated by the corresponding
commutators.  For an ideal $A$ of $L$, set
\[
 [A,{}_0L]=A,
 \qquad
 [A,{}_{r+1}L]=[[A,{}_rL],L].
\]
We write $L'=[L,L]$ and $L''=[L',L']$, and denote the center of $L$ by
$Z(L)$. The Lie ring $L$ is metabelian if $L''=0$.

All tensor products are taken over $\Z$. Let $L$ be a Lie ring. Denote by
$T(L)=\bigoplus_{n\geq 0}L^{\otimes n}$ the tensor algebra of the additive
group of $L$. The universal enveloping algebra of $L$ is the quotient
\[
\U(L)=T(L)\big/\left( x\otimes y-y\otimes x-[x,y]
  \mathrel{\big|} x,y\in L\right).
\]
Let $\iota\colon L\to \U(L)$ denote the canonical additive homomorphism.
The augmentation homomorphism
$\varepsilon\colon \U(L)\to\Z$ is induced by the projection of
$T(L)$ onto $L^{\otimes 0}=\Z$. We write
\(
\varpi(L)=\ker(\varepsilon)
\)
for the augmentation ideal of $\U(L)$.

\subsection{Dimension subrings}

The lower central series of $L$ is defined inductively by
\[
\gamma_1(L)=L,
\qquad
\gamma_{n+1}(L)=[\gamma_n(L),L],\qquad n\geq 1.
\]
The dimension series of $L$ is defined by
\[D_n(L)=\iota^{-1}\left(\varpi(L)^n\right), \quad n \ge 1.\]

For every $n\geq1$,
\begin{equation}\label{eq:lower-in-dimension}
                         \gamma_n(L)\subseteq D_n(L).
\end{equation}
Indeed, $\iota(L)\subseteq\varpi(L)$ and
$[\varpi(L)^r,\varpi(L)]\subseteq\varpi(L)^{r+1}$, so
\eqref{eq:lower-in-dimension} follows by induction.  Both series are
functorial: a homomorphism $f\colon L\to M$ satisfies
\[
 f(\gamma_n(L))\subseteq\gamma_n(M),
 \qquad
 f(D_n(L))\subseteq D_n(M).
\]

We also use the following result of Passi and Sicking.
\begin{Theorem}[{\cite[Theorem~1.1]{PassiSicking2017}}]\label{thm:Passi-Sicking}
Let $L$ be a metabelian Lie ring. Then
\[
             2 \cdot D_n(L)\subseteq\gamma_n(L), \quad n \ge 1.
\]
\end{Theorem}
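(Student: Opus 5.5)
The plan is to reduce to a nilpotent quotient and then separate $x$ from $\gamma_n(L)$ by letting $\U(L)$ act on a module built from $L$ itself. By functoriality of both series, the projection $L\to L/\gamma_n(L)$ sends $D_n(L)$ into $D_n(L/\gamma_n(L))$, and $L/\gamma_n(L)$ is again metabelian. It therefore suffices to prove the following: if $L$ is metabelian with $\gamma_n(L)=0$, then $2x=0$ for every $x\in L\cap\varpi(L)^n$. For such $x$, I would look for a $\U(L)$-module $V$, a vector $v_0\in V$, and an additive map $\pi\colon V\to L$ satisfying three conditions:
\begin{enumerate}[(a)]
 \item $\pi(x\cdot v_0)=2x$ for all $x\in L$;
 \item $\varpi(L)^k\cdot v_0$ is carried by $\pi$ into $\gamma_k(L)$ for every $k$;
 \item $V$ is a genuine $\U(L)$-module, that is, $y\cdot(z\cdot v)-z\cdot(y\cdot v)=[y,z]\cdot v$.
\end{enumerate}
Then $x\in\varpi(L)^n$ gives $2x=\pi(x\cdot v_0)\in\gamma_n(L)=0$. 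Since $\varpi(L)^k$ is spanned by products $y_1\cdots y_k$ with $y_i\in L$, condition (b) reduces to checking that each $y_i$ raises a suitable filtration of $V$ by one step.

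The naive attempt shows where the factor $2$ comes from. Take $V=\Z\oplus L$, $v_0=1$, with $y\cdot1=y$ and $y\cdot z=[y,z]$. Then
\[
 \bigl(y\cdot(z\cdot1)-z\cdot(y\cdot1)\bigr)-[y,z]\cdot1=[y,z]-[z,y]-[y,z]=[y,z],
\]
so the module relation fails on the degree-zero part by exactly one copy of $[y,z]$; on $L$ itself the Jacobi identity makes $\operatorname{ad}$ a genuine representation. The key step is to repair this defect using the metabelian hypothesis. I would enlarge the module to $V=\Z\oplus L\oplus L$ and set $y\cdot1=(y,y)$, letting $y$ act on the two copies of $L$ by $\operatorname{ad}$ and $-\operatorname{ad}$ twisted appropriately, with $\pi$ the difference or sum of the two components. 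An alternative is a module of the form $\Z\oplus L\oplus(L\otimes L)/(\text{relations})$ in which the correction term lives in the second piece. In either version the aim is that the discrepancy computed above is absorbed, while the composite $\pi(y\cdot1)$ doubles $y$. Metabelianity should enter when checking (c) on the correction summand: all elements $[y,z]$ lie in the abelian ideal $L'$, so their brackets with other commutators vanish, and the troublesome terms of the form $[[y,z],[y',z']]$ drop out.

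The main obstacle is the explicit construction of $V$ in the previous paragraph: finding a correction that makes (c) hold exactly while still preserving the weight filtration in (b). I would first try the simplest candidate, namely the split extension of $\Z$ by the $L$-module $L'\oplus L/L'$. Here $L'$ is a module over $L/L'$, because $L'$ is abelian, and a $2$-cocycle correction on $\Z\to L/L'$ can be arranged to be a coboundary precisely after multiplication by $2$. I would then verify (b) by noting that each generator maps $\gamma_k$-components into $\gamma_{k+1}$-components.
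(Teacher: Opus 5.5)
The paper does not prove this statement; it imports it from Passi--Sicking, whose method (recalled in the proof of Lemma~\ref{lem:critical-normal-form}) uses a preabelian presentation $x_1<\dots<x_m$ with exponents $e_i$, the reduction $D_n(L)\subseteq\gamma_n(L)+\langle e_j[x_j,x_i]\mid i<j\rangle$, and a direct analysis of these elements in the metabelian case. Judged on its own terms, your proposal has a genuine gap. The reduction to $\gamma_n(L)=0$ is correct, and so is the deduction from (a)--(c). But the entire content of the theorem lies in the existence of $(V,v_0,\pi)$, and you leave that open. Note that $\Phi(u)=\pi(u\cdot v_0)$ is just an additive map $\Phi\colon\U(L)\to L$ with $\Phi|_L=2\,\mathrm{id}$ and $\Phi(\varpi(L)^k)\subseteq\gamma_k(L)$. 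Conversely, every such $\Phi$ arises from $V=\U(L)$, $v_0=1$. So the module language is only a reformulation, and an a priori stronger one than the theorem: it asks you to extend $2\cdot\mathrm{id}$ from $L/D_n(L)$ to all of $\U(L)/\varpi(L)^n$.

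Moreover, the candidates you sketch cannot be completed. They are built functorially from $L$, with $\pi$ a fixed combination of components, so the resulting $\Phi$ would be natural in $L$, and no natural $\Phi$ exists. Naturality and bilinearity force $\Phi(yz)=\beta[y,z]$, and $yz-zy=[y,z]$ then forces $\beta=1$. Likewise $\Phi(xyz)=\alpha[[x,y],z]+\gamma[[y,z],x]$, and the identities $xyz-yxz=[x,y]z$ and $xyz-xzy=x[y,z]$ in $\U(L)$ give
\[
 2\alpha-\gamma=1,\qquad 2\gamma-\alpha=-1,
\]
so $3\alpha=1$. In the free metabelian Lie ring of nilpotency class $3$ on $x,y,z$, the two commutators are $\Z$-independent, so this system has no integral solution. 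This already happens for $n=4$, the first degree in which the theorem has content.

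Concretely, if the first copy of $L$ carries $\operatorname{ad}$ and $y\cdot1=(y,y)$, the first component of your degree-zero defect is still $[y,z]$. Also, $b\mapsto-[y,b]$ is a left module structure on $L$ only when $2[L',L]=0$. In the third candidate, the defect lies in $L'$, not in $L/L'$, and $L/L'$ has no canonical map back to $L$.

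Rationally, the ``repair'' is the symmetrization projection $\U(L)\otimes\mathbb{Q}\to L\otimes\mathbb{Q}$, whose coefficients have denominators $2,6,\dots$; the factor $2$ cannot be obtained canonically. A proof must use non-canonical data adapted to $L$: an ordered cyclic decomposition coming from a preabelian presentation, with $e_ix_i\in L'$ and divisibility among the $e_i$. This is what Passi--Sicking do, and what the paper itself does for its certificate in \S~\ref{sec:metabelian_lie_rings}.
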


\subsection{The Poincar\'e--Birkhoff--Witt theorem}

The filtration of $T(L)$ by tensor length induces an increasing
filtration
\[
0=P_{\leq -1}\U(L)\subseteq P_{\leq 0}\U(L)
\subseteq P_{\leq 1}\U(L)\subseteq P_{\leq 2}\U(L)\subseteq\cdots
\]
on $\U(L)$, where $P_{\leq r}\U(L)$ is the additive subgroup of $\U(L)$
generated by products of at most $r$ elements of $L$. In particular,
\[
P_{\leq 0}\U(L)=\Z\cdot 1,
\qquad
P_{\leq 1}\U(L)=\langle\{1\}\cup\iota(L)\rangle.
\]

Consider the associated graded algebra
\[
\gr\U(L)=\bigoplus_{r\geq 0}\gr_r\U(L),
\qquad
\gr_r\U(L)
=\frac{P_{\leq r}\U(L)}{P_{\leq r-1}\U(L)},\qquad r\geq 0.
\]
It is a commutative connected $\Z_{\geq 0}$-graded algebra.

\begin{Theorem}[Poincar\'e--Birkhoff--Witt]
\label{thm:pbw}
Let $L$ be a Lie ring. Then the
canonical additive homomorphism $\iota\colon L\to \U(L)$ is injective, and
the canonical homomorphism of commutative rings
\[
\iota_*\colon \Sym_{\Z}(L)
\longrightarrow \gr\U(L)
\]
is an isomorphism of graded algebras.
\end{Theorem}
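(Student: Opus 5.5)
The surjectivity of $\iota_*$ is formal, and the substance of the statement is injectivity of $\iota_*$, which also gives injectivity of $\iota$ since $\gr_1\U(L)\cong L$ forces $L\cap P_{\leq0}\U(L)=0$. For surjectivity, $\gr\U(L)$ is generated in degree one by the images of $\iota(L)$. Any two such images commute in $\gr_2$, because $xy-yx=[x,y]\in P_{\leq1}$. Hence the universal property of $\Sym_\Z(L)$ gives a graded surjection $\iota_*$.

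For injectivity I would follow the classical Poincar\'e--Birkhoff--Witt strategy. The aim is to make $L$ act on $S=\Sym_\Z(L)$ by operators $\rho(x)$ with two properties: $\rho(x)\rho(y)-\rho(y)\rho(x)=\rho([x,y])$, and $\rho(x)s\equiv x\cdot s$ modulo $\Sym_{\leq r}$ for $s\in\Sym_{\leq r}$. The universal property then makes $S$ a $\U(L)$-module. Applying $u\in P_{\leq r}\U(L)$ to $1\in S$ recovers the top symbol of $u$, so an element of $\Sym^r(L)$ killed by $\iota_*$ must be zero.

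\begin{enumerate}[(1)]
 \item \emph{Reduction to a reasonable additive group.} I would first reduce to the case where the additive group of $L$ is as close to free as possible. When $L$ is additively free, I would choose a totally ordered $\Z$-basis of $L$ and define $\rho$ on ordered monomials $x_{i_1}\cdots x_{i_r}$ with $i_1\leq\cdots\leq i_r$ by the usual inductive recipe. The verification of the bracket identity is the standard induction on degree, which is characteristic-free once a basis exists. Equivalently, one can run Bergman's diamond lemma on the presentation of $\U(L)$ with rewriting rules $x_jx_i\to x_ix_j+[x_j,x_i]$ for $j>i$. The only overlap ambiguities, $x_kx_jx_i$ with $k>j>i$, resolve precisely by the Jacobi identity.
 \item \emph{General abelian groups via filtered colimits.} Both $\Sym_\Z(-)$ and $\gr\U(-)$ commute with filtered colimits. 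Every Lie ring is a filtered colimit of Lie rings of the form $F/R$, with $F$ free as a Lie ring and hence additively free by Witt's theorem. So it suffices to compare $\gr\U(F/R)$ with $\Sym(F/R)$. Here I would write $\U(F/R)=\U(F)/\U(F)R\U(F)$ and control the associated graded of the two-sided ideal generated by $R$ in $\gr\U(F)\cong\Sym(F)$. Concretely, the claim is that this associated graded is exactly the ideal generated by the image of $R$ in $\Sym^1(F)$, which gives $\gr\U(F/R)\cong\Sym(F)/(R)=\Sym(F/R)$.
 \item \emph{Main obstacle.} The hard step is the claim at the end of (2). The leading terms of $uRv$ could in principle produce torsion phenomena, as in Cohn's counterexamples over rings of positive characteristic. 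I would attack it by localizing at each prime $p$ and arguing separately. After tensoring with $\mathbb{Q}$ the statement is classical. For the $p$-primary part, I would filter $R$ and use the fact that over $\Z_{(p)}$ the sequences $0\to R\to F\to F/R\to0$ can be analysed through a well-ordered generating set of $F/R$ with relations of the form $p^{k}x=\sum(\text{earlier})$.
\end{enumerate}

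For step (3), I would try first to extend the diamond-lemma argument to include these torsion relations as additional rewriting rules. This produces new overlap ambiguities between the relation rules and the commutation rules, and showing that these resolve is where I expect the real work to lie. If the direct resolution becomes unmanageable, I would fall back on citing Higgins's integral Birkhoff--Witt theorem for Lie rings.
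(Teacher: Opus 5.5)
\paragraph{Comparison with the paper.} The paper does not prove this statement. It quotes it as the Poincar\'e--Birkhoff--Witt theorem over the Dedekind domain $\Z$ and refers to Higgins~\cite{Higgins1969}. Your final fallback is therefore exactly the paper's route.

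The parts you actually carry out are correct: the surjectivity of $\iota_*$, the injectivity of $\iota$ deduced from injectivity of $\iota_*$ in degree one, and the additively free case (via the representation on $\Sym_{\Z}(L)$ or the diamond lemma). Your rational remark in fact gives a little more than you use. Since $-\otimes\mathbb{Q}$ is exact, $\iota_*\otimes\mathbb{Q}$ is the classical isomorphism for the $\mathbb{Q}$-Lie algebra $L\otimes\mathbb{Q}$, so $\ker\iota_*$ is always a torsion group. This settles every $L$ whose additive group is torsion-free, because then $\Sym_{\Z}(L)$ is a filtered colimit of free groups and hence torsion-free.

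\paragraph{The gap.} As a self-contained proof, steps (2)--(3) leave a genuine gap, and it is exactly the part that separates the integral theorem from the classical one.
\begin{enumerate}[(a)]
 \item The claim at the end of (2) is not a reduction. Since $R$ is an ideal, $\U(F)R\U(F)=\U(F)R$. By induction on $r$, the equality $\gr(\U(F)R)=R\cdot\Sym_{\Z}(F)$ is equivalent to $\U(F)R\cap P_{\leq r}\U(F)=R\cdot P_{\leq r-1}\U(F)$ for all $r$. That is just injectivity of $\iota_*$ for $F/R$ itself. The filtered colimit is also idle, since every Lie ring already has the form $F/R$.
 \item Step (3) leaves open precisely the torsion in $\Sym_{\Z}(L)$. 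This is also the case the paper needs: Corollary~\ref{cor:pbw-basis}, with its $\gcd$ orders, is applied to Lie rings with additive torsion, for example the ring $T$ in the proof of Lemma~\ref{lem:last-initial-form}.
 \item Localizing at $p$ does not touch this, since over $\Z_{(p)}$ the torsion (possibly of infinite height) is still there.
 \item Extending the diamond lemma is not routine. Bergman's lemma needs rewriting rules whose leading word has coefficient $1$, and $p^kx\to\sum(\text{earlier})$ does not have this form. You would need a Gr\"obner--Shirshov theory over $\Z_{(p)}$ with extra compositions, for instance two torsion rules acting on the same quadratic word. Resolving these is where properties of the base ring must enter: Cohn's examples are genuine Lie algebras over non-Dedekind bases for which PBW fails, so the Jacobi identity alone cannot carry the argument.
\end{enumerate}

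Without the citation, your proposal proves the theorem only for additively torsion-free $L$. With it, it is the paper's proof.
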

This is the Poincar\'e--Birkhoff--Witt theorem over the Dedekind domain $\Z$; see
Higgins~\cite{Higgins1969}.
Henceforth we identify every Lie ring with its image in its universal
enveloping algebra.

If the additive group of $L$ is a direct sum of cyclic groups, as is the
case, for example, when the additive group of $L$ is finitely generated, Theorem~\ref{thm:pbw} admits the
following explicit form.

\begin{Corollary}
\label{cor:pbw-basis}
Let $L$ be a Lie ring, and let $\mathcal X\subseteq L$ be a totally
ordered subset such that
\(
L=\bigoplus_{x\in\mathcal X}\langle x\rangle
\)
as an abelian group. Set $M_0=\Z\cdot 1\subseteq \U(L)$. For
$r\geq 1$, let $M_r\subseteq \U(L)$ be the additive subgroup
\[M_r = \langle x_1\cdots x_r \mid x_i\in\mathcal X, \; x_1\leq x_2\leq\cdots\leq x_r\rangle.\]
Then
\[\U(L)=\bigoplus_{r\geq 0}M_r,
\qquad
P_{\leq r}\U(L)=\bigoplus_{s=0}^r M_s,
\quad r\geq 0.
\]
In particular,
\(
\gr_r\U(L)\cong M_r
\) for each \(r \geq 0\).
Moreover, for $r\geq1$,
\[
M_r=\bigoplus_{\substack{x_1,\ldots,x_r\in\mathcal X\\
x_1\leq\cdots\leq x_r}}\langle x_1\cdots x_r\rangle,
\]
and each generator $x_1\cdots x_r$ has additive order
\[
\operatorname{ord}(x_1\cdots x_r)
=\gcd\left(\operatorname{ord}(x_1),\ldots,\operatorname{ord}(x_r)\right).
\]
Here we use the convention that $\operatorname{ord}(x)=0$ when $x$ has
infinite order.
\end{Corollary}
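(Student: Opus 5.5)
We derive Corollary~\ref{cor:pbw-basis} from Theorem~\ref{thm:pbw} together with the standard structure of symmetric powers of a direct sum of cyclic groups.

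\textbf{Step 1: symmetric powers.} Since $L=\bigoplus_{x\in\mathcal X}\langle x\rangle$ and $\Sym$ sends direct sums to tensor products,
\[\Sym_{\Z}(L)\cong\bigotimes_{x\in\mathcal X}\Sym_{\Z}(\langle x\rangle).\]
We then compute each factor. If $\operatorname{ord}(x)=0$, then $\Sym(\langle x\rangle)=\Z[x]$. If $\operatorname{ord}(x)=d>0$, then $\Sym(\Z/d)=\Z\oplus\bigoplus_{k\geq1}(\Z/d)\,x^k$, since $\Sym^k(\Z/d)=(\Z/d)^{\otimes k}\cong\Z/d$ for $k\geq1$. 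Expanding the tensor product, $\Sym^r(L)$ is the direct sum over monomials $x_1\cdots x_r$ with $x_1\leq\cdots\leq x_r$. Each summand is a tensor product of cyclic groups $\Z/\operatorname{ord}(x)$, one for each distinct $x$ occurring, and such a tensor product is cyclic of order $\gcd(\operatorname{ord}(x_1),\dots,\operatorname{ord}(x_r))$ under the convention $\gcd(0,d)=d$.

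\textbf{Step 2: lift to $\U(L)$.} By Theorem~\ref{thm:pbw}, $\iota_*\colon\Sym^r(L)\to\gr_r\U(L)$ is an isomorphism, and it sends the symmetric monomial $x_1\cdots x_r$ to the class of the ordered product $x_1\cdots x_r\in P_{\leq r}\U(L)$. We show by induction on $r$ that $P_{\leq r}\U(L)=\bigoplus_{s\leq r}M_s$ and that $M_r\cap P_{\leq r-1}\U(L)=0$.
\begin{itemize}[leftmargin=*]
\item \emph{Spanning:} $P_{\leq r}$ is generated by products of at most $r$ elements of $\mathcal X$, because $\mathcal X$ generates $L$ additively. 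Reordering a product changes it only by terms of lower length, since $xy-yx=[x,y]$. So $P_{\leq r}=M_r+P_{\leq r-1}$.
\item \emph{Directness:} the composite $M_r\to P_{\leq r}\to\gr_r\U(L)\cong\Sym^r(L)$ maps the generators $x_1\cdots x_r$ onto the standard generators of the direct sum decomposition from Step~1. Hence any relation $\sum n_i\,m_i\in P_{\leq r-1}$ among ordered monomials forces each $n_i$ to be divisible by the order of the corresponding cyclic summand of $\Sym^r(L)$.
\end{itemize}

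\textbf{Step 3: the main obstacle.} The delicate point is the converse: we must show that $\operatorname{ord}(x_1\cdots x_r)$ in $\U(L)$ actually divides $g=\gcd(\operatorname{ord}(x_i))$. Only then is $M_r\to\gr_r$ injective, which gives $M_r\cap P_{\leq r-1}=0$, the internal direct sum, and the stated orders.

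We argue as follows. Since $\operatorname{ord}(x_i)\,x_i=0$ in $L$, we have $\operatorname{ord}(x_i)\cdot(x_1\cdots x_r)=x_1\cdots(\operatorname{ord}(x_i)x_i)\cdots x_r=0$ by multilinearity of the product. Hence the order of $x_1\cdots x_r$ divides each $\operatorname{ord}(x_i)$, and therefore divides $g$. Comparing with the image in $\gr_r$, whose summand is cyclic of order exactly $g$, shows that each generator has order exactly $g$ and that the map $M_r\to\gr_r\U(L)$ is an isomorphism.

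This gives $\U(L)=\bigoplus_r M_r$, the formula for $P_{\leq r}\U(L)$, the isomorphism $\gr_r\U(L)\cong M_r$, and the decomposition of $M_r$ with the stated orders. The only non-formal input is the integral PBW theorem; the remaining care is the bookkeeping for finite-order generators in Step~1.
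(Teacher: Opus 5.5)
Your proposal is correct and follows essentially the paper's argument: PBW identifies $\gr_r\U(L)$ with $\Sym^r_{\Z}(L)$, a direct sum of cyclic groups of the stated orders indexed by ordered monomials, and the fact that each finite $\operatorname{ord}(x_i)$, hence by B\'ezout their gcd, annihilates $x_1\cdots x_r$ in $\U(L)$ makes the ordered products a splitting of $P_{\leq r}\U(L)\to\gr_r\U(L)$ with image $M_r$, after which induction on $r$ gives the decompositions. Your Step~1 computation of $\Sym^r$ and your explicit spanning argument only supply details that the paper leaves implicit.
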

\begin{proof}
By Theorem~\ref{thm:pbw},
$P_{\leq r}\U(L)/P_{\leq r-1}\U(L)\cong\Sym^r_{\Z}(L)$, whose ordered monomials
generate a direct sum of cyclic groups with the stated orders.
Each finite order of a factor annihilates the corresponding product in
$\U(L)$; B\'ezout's identity therefore shows that their greatest common
divisor also annihilates it when positive.  Thus sending each monomial
class to the same product in $\U(L)$ defines a section of
$P_{\leq r}\U(L)\to P_{\leq r}\U(L)/P_{\leq r-1}\U(L)$ with image $M_r$.
This proves the assertion about $M_r$ and gives
$P_{\leq r}\U(L)=P_{\leq r-1}\U(L)\oplus M_r$.
Induction on $r$ yields the displayed decompositions.
\end{proof}

\section{Finite plateaux}\label{sec:plateaux}

\begin{flushright}
\begin{minipage}{0.62\textwidth}
\itshape
\guillemotleft\,Un plateau est toujours au milieu, ni début ni
fin.\,\guillemotright

\smallskip
\hfill---Gilles Deleuze and Félix Guattari
\end{minipage}
\end{flushright}

Fix an integer $N\geq1$.  We construct a metabelian Lie ring $L_N$ and
an element $a$ of order two, and prove
\[
 D_{N+4}(L_N)=\langle a\rangle,
 \qquad
 a\in D_{2N+4}(L_N)\setminus D_{2N+5}(L_N).
\]
Since the dimension series is descending, these assertions determine the
plateau and the first zero term.

\subsection{Construction of the Lie rings}

Let $F_{\mathrm{met}}$ be the free metabelian Lie ring on $x_1,\ldots,x_5$.
Order the generators by
\[
                         x_5<x_1<x_2<x_3<x_4.
\]
The standard metabelian Hall basis consists of the generators and the
commutators
\[
 [x_{i_0},x_{i_1},\ldots,x_{i_r}],
 \qquad r\geq1,\qquad
 x_{i_0}>x_{i_1}\leq x_{i_2}\leq\cdots\leq x_{i_r}.
\]
Put
\[
 t=[x_4,\underbrace{x_5,\ldots,x_5}_{N}],
 \qquad
 c_i=[t,x_i],
 \qquad
 u_i=[c_i,x_i],
 \quad 1\leq i\leq3.
\]
Thus $t$, $c_i$, and $u_i$ have weights $N+1$, $N+2$, and $N+3$,
respectively.  With the chosen order, the three $u_i$ are distinct Hall
basis elements.

Define the relators
\begin{equation}\label{eq:shifted-relators}
 \begin{aligned}
 r_1&=4x_1+2c_3+c_2,\\
 r_2&=16x_2+4c_3-c_1,\\
 r_3&=64x_3-4c_2-2c_1.
 \end{aligned}
\end{equation}
Define $L_N$ by imposing the following relations on $F_{\mathrm{met}}$:
\begin{enumerate}[(i)]
 \item $r_1=r_2=r_3=0$;
 \item $\gamma_{N+4}(F_{\mathrm{met}})=0$;
 \item all Hall basis elements of weight $N+3$ other than
 $u_1,u_2,u_3$ are zero;
 \item
 \[
                    u_1=16u_3,
                    \qquad u_2=4u_3,
                    \qquad 64u_3=0.
 \]
\end{enumerate}

We use the same symbols for elements of $F_{\mathrm{met}}$ and their images in $L_N$.

\Needspace{19\baselineskip}
\begin{Example}\label{ex:first-plateau}
Let $\mathcal H_q$ denote the weight-$q$ part of the metabelian Hall basis
fixed above.  A subscript $\mathrm{met}$ indicates a presentation in the
variety of metabelian Lie rings.  Substituting $N=1$ gives
\[
\begin{aligned}
L_1={}&
\presentationlangle
\left.
x_1,x_2,x_3,x_4,x_5
\ \middle|\
\begin{aligned}
\text{(i)}\;&\quad
 \begin{gathered}
 4x_1+2[x_4,x_5,x_3]+[x_4,x_5,x_2]=0,\\
 16x_2+4[x_4,x_5,x_3]-[x_4,x_5,x_1]=0,\\
 64x_3-4[x_4,x_5,x_2]-2[x_4,x_5,x_1]=0;
 \end{gathered}\\[5pt]
\text{(ii)}\;&\quad \gamma_5=0;\\[5pt]
\text{(iii)}\;&\quad
 \begin{gathered}
 h=0\quad\text{for }h\in\mathcal H_4\setminus
 \{[x_4,x_5,x_1,x_1],\\
 \hfill [x_4,x_5,x_2,x_2],[x_4,x_5,x_3,x_3]\};
 \end{gathered}\\[5pt]
\text{(iv)}\;&\quad
 \begin{gathered}
 [x_4,x_5,x_1,x_1]=16[x_4,x_5,x_3,x_3],\\
 [x_4,x_5,x_2,x_2]=4[x_4,x_5,x_3,x_3],\\
 64[x_4,x_5,x_3,x_3]=0.
\end{gathered}
\end{aligned}
\right.
\presentationrangle_{\mathrm{met}}.
\end{aligned}
\]
\end{Example}

\begin{Lemma}\label{lem:top-layer}
The Lie ring $L_N$ is metabelian and nilpotent of class $N+3$, and
\[
             \gamma_{N+3}(L_N)=\langle u_3\rangle\cong\Z/64\Z.
\]
\end{Lemma}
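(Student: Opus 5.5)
Metabelianity is immediate, since $L_N$ is a quotient of $F_{\mathrm{met}}$. Relation (ii) gives $\gamma_{N+4}(L_N)=0$, so the class is at most $N+3$. Once $\gamma_{N+3}(L_N)=\langle u_3\rangle$ with $u_3$ of order $64$ is known, it follows that $\gamma_{N+3}(L_N)\neq0$, and the class is exactly $N+3$.

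\emph{Generation.} The key structural observation is that every relator in (i)--(iv) is compatible with the weight filtration. Each $r_i$ is homogeneous of weight $1$ plus a term of weight $N+2$, and relations (ii)--(iv) live in weights $\geq N+3$. So $\gamma_{N+3}(L_N)$ is the image of $\gamma_{N+3}(F_{\mathrm{met}})$. Modulo $\gamma_{N+4}$ this image is spanned by Hall basis elements of weight $N+3$, and by (iii) and (iv) it is spanned by $u_3$. Hence $\gamma_{N+3}(L_N)=\langle u_3\rangle$ and $64u_3=0$.

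\emph{Order exactly $64$.} The remaining issue is whether the relators $r_i$, through their consequences, force an extra relation on $u_3$. This is the main obstacle. I would build an explicit model. Let $Q$ be the quotient of $F_{\mathrm{met}}$ by (ii), (iii), (iv) only, but \emph{without} (i). By the metabelian Hall basis theorem, $\gamma_{N+3}(Q)$ is free abelian on the weight-$(N+3)$ Hall basis modulo (iii)--(iv), which gives $\langle u_3\rangle\cong\Z/64$ inside $Q$, a central ideal. Then $L_N=Q/I$, where $I$ is the ideal generated by $r_1,r_2,r_3$. We must show $I\cap\langle u_3\rangle=0$.

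To compute $I$, note first that brackets of $r_i$ with anything of weight $\geq2$ vanish or land in weight $\geq N+4$, since $[c_j,\cdot\,]$ has weight $\geq N+3$. So $I$ is spanned by $r_i$, $[r_i,x_k]$, and $[r_i,x_k,x_l]$, and so on. Now $[r_i,x_k]$ equals a multiple $a_ix_i\mapsto a_i[x_i,x_k]$ plus $[c_j,x_k]$ terms of weight $N+3$. Iterated brackets with at least two generators have weight-$(N+3)$ part zero unless they land there, so the only way to produce pure elements of $\langle u_3\rangle$ is via integer combinations in which the low-weight parts cancel. Since the low-weight parts $4[x_1,\ldots]$, $16[x_2,\ldots]$, $64[x_3,\ldots]$ are distinct Hall monomials of weight $\le N+2<N+3$ (they are free in $Q$), cancellation requires coefficients making them zero. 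I would verify the following. The $u$-component of $[r_1,x_1]$ is $[c_2,x_1]+2[c_3,x_1]$, which is zero by (iii). Similarly, $[r_i,x_k]$ has weight-$(N+3)$ components among non-$u$ Hall elements, or $u_i$ with coefficient $a$. For instance, $[r_2,x_1]$ contains $-u_1=-16u_3$, but also $16[x_2,x_1]$, which is nonzero in $Q$ and cannot be cancelled except by integer multiples of itself.

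The cleanest way to organize all of this is to define the Lie ring structure directly. Take $L_N$ additively as (the span of Hall basis elements of weights $2,\ldots,N+2$, modulo (iii)) $\oplus\,\Z/64\,u_3\,\oplus$ the degree-one part $\bigoplus\Z x_i$, with the torsion $x_i$ replaced via the $r_i$. Then check the Jacobi identity, which is easy because the ring is metabelian and the $u_3$ term is central. Alternatively, I would just trust the GAP/Lean computation mentioned in \S\ref{sec:formalization} for the verification that $u_3$ has order $64$.
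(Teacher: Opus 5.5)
Your treatment of metabelianity, of $\gamma_{N+4}(L_N)=0$, and of $\gamma_{N+3}(L_N)=\langle u_3\rangle$ with $64u_3=0$ is correct and matches the paper. The gap is in the only substantial step: showing that $u_3$ has order exactly $64$.

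Working in $Q$ and proving $I\cap\langle u_3\rangle=0$ is a sound framework. However, your decisive claim is false: you assert that the low-weight parts of the generators of $I$ are distinct Hall monomials, so any cancellation forces the coefficients to vanish. In $Q$ one has $[r_1,x_2]=-4[x_2,x_1]+u_2$ and $[r_2,x_1]=16[x_2,x_1]-u_1$, because the mixed terms $[c_j,x_k]$ with $j\neq k$ die by (iii). So weight-two parts do cancel, and
\begin{gather*}
4[r_1,x_2]+[r_2,x_1]=4u_2-u_1,\qquad
16[r_1,x_3]+[r_3,x_1]=32u_3-2u_1,\\
4[r_2,x_3]+[r_3,x_2]=16u_3-4u_2
\end{gather*}
are genuine elements of $I\cap\gamma_{N+3}(Q)$. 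Likewise, with $(e_1,e_2,e_3)=(4,16,64)$, the bracket $[r_i,x_{j_1},\ldots,x_{j_{N+2}}]=e_i[x_i,x_{j_1},\ldots,x_{j_{N+2}}]$ lies directly in $\gamma_{N+3}(Q)$. By a multidegree count it equals $e_iku_i$ for some $k\in\Z$.

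All these elements vanish only because of the specific values in (iv): $u_1=16u_3$, $u_2=4u_3$, $64u_3=0$. Your argument never uses these values. It would therefore equally ``prove'' the lemma if (iv) were replaced by $u_1=u_2=u_3$, $64u_3=0$. But then $4u_2-u_1=3u_3=0$, hence $u_3=0$. Separately, your remark that brackets of $r_i$ with elements of weight $\geq2$ vanish is wrong: $[r_i,y]=e_i[x_i,y]+[C_i,y]$, and only the second summand dies.

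Neither fallback closes the gap. The explicit model is only sketched, and a weight-by-weight direct sum cannot be right, since $L_N$ glues weights $2$ and $N+3$ (e.g.\ $4[x_2,x_1]=4u_3$ in $L_N$). Appealing to machine verification is not a proof; moreover, the GAP listing concerns the group $\Gamma$, not $L_N$.

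You can repair your route by finishing the graded analysis in $Q$:
\begin{enumerate}[(a)]
 \item The weight-one component forces the coefficients of the $r_i$ to be zero.
 \item The weight-two component kills the coefficients of $[r_i,x_4]$ and $[r_i,x_5]$. It also forces the brackets $[r_i,x_k]$, with $i\neq k$ in $\{1,2,3\}$, to occur only in the three combinations displayed above. The $[r_i,x_i]$ are already $0$ in $Q$.
 \item The weight-$(N+3)$ component is then a combination of those three elements and of the terms $e_iku_i$, all of which are $0$ in $Q$.
\end{enumerate}

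The paper packages exactly this bookkeeping into one additive map $\mu\colon F_{\mathrm{met}}/\gamma_{N+4}(F_{\mathrm{met}})\to\Z/256\Z$. It is given by $\mu([x_2,x_1])=4$, $\mu([x_3,x_1])=2$, $\mu([x_3,x_2])=1$, $\mu(u_1)=64$, $\mu(u_2)=16$, $\mu(u_3)=4$, and $\mu=0$ on the other Hall basis elements. One checks that $\mu$ kills $r_i$, $[r_i,x_j]$, and the longer brackets, which contribute multiples of $e_i\mu(u_i)=256$. So $\mu$ descends to $L_N$, and $\mu(u_3)=4$ has order $64$.
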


\begin{proof}
Metabelianity is inherited from $F_{\mathrm{met}}$, and relation~\textup{(ii)} gives
$\gamma_{N+4}(L_N)=0$.  By relation~\textup{(iii)},
$\gamma_{N+3}(L_N)$ is generated by $u_1,u_2,u_3$.  Relation~\textup{(iv)}
reduces it to $\langle u_3\rangle$, with $64u_3=0$.
We will show that $u_3$ has order $64$.  Define an additive homomorphism
$\mu\colon F_{\mathrm{met}}/\gamma_{N+4}(F_{\mathrm{met}})\to\Z/256\Z$ by
\[
 \begin{aligned}
 \mu([x_2,x_1])&=4,&\mu([x_3,x_1])&=2,&
 \mu([x_3,x_2])&=1,\\
 \mu(u_1)&=64,&\mu(u_2)&=16,&\mu(u_3)&=4,
 \end{aligned}
\]
and by zero on all other Hall basis elements.  We claim that $\mu$
annihilates the relations \textup{(i)--(iv)}, and hence descends to $L_N$.

The relators in \textup{(iii)} and \textup{(iv)} are central modulo
$\gamma_{N+4}(F_{\mathrm{met}})$ and have zero image under $\mu$.
It remains to check the $r_i$ and their iterated brackets with the generators.
Write $r_i=e_ix_i+C_i$, where $(e_1,e_2,e_3)=(4,16,64)$ and
$C_i\in\gamma_{N+2}(F_{\mathrm{met}})$, as in~\eqref{eq:shifted-relators}.
Substitution into the definition of $\mu$ gives $\mu(r_i)=0$ and
$\mu([r_i,x_j])=0$.

For $s\geq2$, the term $[C_i,x_{j_1},\ldots,x_{j_s}]$ lies in
$\gamma_{N+2+s}(F_{\mathrm{met}})\subseteq\gamma_{N+4}(F_{\mathrm{met}})$.  Hence, modulo
$\gamma_{N+4}(F_{\mathrm{met}})$,
\[
 [r_i,x_{j_1},\ldots,x_{j_s}]
       =e_i[x_i,x_{j_1},\ldots,x_{j_s}].
\]
The commutator on the right has weight $s+1\geq3$, so only $u_1,u_2,u_3$
can contribute to its image under $\mu$.
Expansion in the Hall basis preserves the number of occurrences of each
generator.  Since the commutator contains $x_i$, only $u_i$ among these
three elements can occur.  Thus the image is a multiple of
$e_i\mu(u_i)=256=0$ in $\Z/256\Z$.
This proves that $\mu$ annihilates the defining ideal.  The claim is proved.
Since $\mu(u_3)=4$ has order $64$, so does $u_3$.
\end{proof}

\subsection{Computation of the dimension series}

Set
\[
 b_{12}=[x_1,x_2],
 \qquad
 b_{13}=[x_1,x_3],
 \qquad
 b_{23}=[x_2,x_3]
\]
and define
\begin{equation}\label{eq:a-definition}
                 a=32b_{12}+64b_{13}+128b_{23}\in L_N.
\end{equation}

For distinct $i,j\in\{1,2,3\}$, metabelianity gives
$[c_i,x_j]=[t,x_i,x_j]=[t,x_j,x_i]$, and relation~\textup{(iii)} makes
this mixed commutator zero.  Bracketing
\eqref{eq:shifted-relators} with $x_1,x_2,x_3$ therefore gives
\begin{equation}\label{eq:six-relations}
 \begin{array}{rclrcl}
 4b_{12}+u_2&=&0,& 4b_{13}+2u_3&=&0,\\
 -16b_{12}-u_1&=&0,&16b_{23}+4u_3&=&0,\\
 -64b_{13}-2u_1&=&0,&-64b_{23}-4u_2&=&0.
 \end{array}
\end{equation}

\begin{Lemma}\label{lem:a-order}
In $L_N$ one has $a=32u_3$.  In particular,
\(\langle a\rangle\cong\Z/2\Z\).
\end{Lemma}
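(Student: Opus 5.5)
Use the six relations \eqref{eq:six-relations} to rewrite each summand of $a$ as a multiple of $u_3$, and then read off the order from Lemma~\ref{lem:top-layer}.

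\emph{First summand.} Multiply $4b_{12}+u_2=0$ by $8$ to get $32b_{12}=-8u_2$. By relation~(iv), $u_2=4u_3$, so
\[
32b_{12}=-32u_3.
\]

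\emph{Second summand.} Multiply $4b_{13}+2u_3=0$ by $16$ to get
\[
64b_{13}=-32u_3.
\]

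\emph{Third summand.} Multiply $16b_{23}+4u_3=0$ by $8$ to get $128b_{23}=-32u_3$.

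\emph{Conclusion.} Summing the three identities gives
\[
a=-96u_3.
\]
Since $64u_3=0$, we have $-96u_3=32u_3$, so $a=32u_3$.

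As a consistency check, the remaining relations agree with this. For example, $-16b_{12}=u_1=16u_3\cdot 4$ does not conflict, because $4b_{12}=-4u_3$ gives $16b_{12}=-16u_3$ and $u_1=16u_3$.

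For the order, Lemma~\ref{lem:top-layer} says $u_3$ has order exactly $64$. Hence $32u_3$ has order exactly $2$, and $\langle a\rangle\cong\Z/2\Z$.

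The only genuine input is the exact order of $u_3$, which is already established by the homomorphism $\mu$ in Lemma~\ref{lem:top-layer}. Otherwise the argument is bookkeeping: the main thing to get right is the sign and coefficient arithmetic modulo $64$, and the fact that the relations in \eqref{eq:six-relations} hold in $L_N$. The latter was justified above: the mixed terms $[c_i,x_j]$ vanish for $i\neq j$, and $[c_i,x_i]=u_i$.
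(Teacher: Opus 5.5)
Your proof is correct and is essentially the paper's: you rewrite each summand of $a$ as a multiple of $u_3$ using \eqref{eq:six-relations} and relation~(iv), obtain $a=-96u_3=32u_3$, and read off the order from Lemma~\ref{lem:top-layer}. The only difference is the choice of relations: you use $4b_{12}+u_2=0$, $4b_{13}+2u_3=0$, $16b_{23}+4u_3=0$, whereas the paper uses the three relations involving $16b_{12}$, $64b_{13}$, $64b_{23}$. Both choices give the same result. One small slip: the expression $u_1=16u_3\cdot 4$ in your side check would force $u_1=0$, but the check plays no role in the argument.
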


\begin{proof}
The relations involving $16b_{12}$, $64b_{13}$, and $64b_{23}$
in~\eqref{eq:six-relations} give
\[
 32b_{12}=-2u_1,
 \qquad
 64b_{13}=-2u_1,
 \qquad
 128b_{23}=-8u_2.
\]
Since $u_1=16u_3$, $u_2=4u_3$, and $64u_3=0$, it follows that
\[
 a=-4u_1-8u_2=-96u_3=32u_3.
\]
By Lemma~\ref{lem:top-layer}, the element $u_3$ has order $64$; hence
$32u_3$ has order $2$.
\end{proof}

\Needspace{6\baselineskip}
\begin{Lemma}\label{lem:a-height}
The element $a$ belongs to $D_{2N+4}(L_N)$.
\end{Lemma}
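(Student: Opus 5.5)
The plan is to produce $a$ explicitly as an element of $\varpi(L_N)^{2N+4}$ by multiplying together two elements of $\varpi(L_N)^{N+2}$ inside $\U(L_N)$ and then rewriting the product.

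\emph{Degree-$(N+2)$ inputs.} Since $t\in\gamma_{N+1}(L_N)$, \eqref{eq:lower-in-dimension} gives $c_i\in\varpi^{N+2}$ and $u_i\in\varpi^{N+3}$. Writing $r_i=e_ix_i+C_i$ as in the proof of Lemma~\ref{lem:top-layer}, relation~(i) says
\[
e_ix_i=-C_i\in\gamma_{N+2}(L_N)\subseteq\varpi^{N+2},\qquad (e_1,e_2,e_3)=(4,16,64).
\]
So each of $4x_1$, $16x_2$, $64x_3$ and each $c_j$ lies in $\varpi^{N+2}$. It follows that every product $A\cdot B$ with $A,B$ taken from these elements lies in $\varpi^{2N+4}$. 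Examples are $(4x_1)c_j$, $c_j(16x_2)$, $(4x_1)(16x_2)$ and $C_iC_j$.

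\emph{Collecting relations.} Each such product can be computed in two ways. The first uses $e_ix_i=-C_i$, giving a product of two $c$'s. The second keeps the $x_i$ and reorders with the PBW straightening rule $yz=zy+[y,z]$. Differences of these two expressions are elements of $\U(L_N)$ that lie in $\varpi^{2N+4}$. Several simplifications apply:
\begin{itemize}
\item The $c_j$ commute with each other, because $L_N''=0$.
\item The mixed brackets $[c_i,x_j]$, $i\neq j$, vanish by relation~(iii).
\item The brackets $[c_i,x_i]=u_i$ are governed by relation~(iv).
\item The brackets $[x_i,x_j]=b_{ij}$ are governed by \eqref{eq:six-relations}.
\end{itemize}
The goal is a $\Z$-linear combination $w$ of such products in which all terms of PBW degree $\geq2$ cancel, so that $w$ is an element of $L_N$. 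This element should be $32b_{12}+64b_{13}+128b_{23}$, or equivalently $32u_3$ by Lemma~\ref{lem:a-order}.

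\emph{Cancelling the quadratic terms.} For the quadratic parts to cancel, I would work in $\gr_2\U(L_N)\cong\Sym^2(L_N)$ and use Corollary~\ref{cor:pbw-basis} with a Hall-type additive basis of $L_N$. The natural candidates are commutator-type combinations. Examples are $(4x_1)(16x_2)-(16x_2)(4x_1)=64b_{12}$, and mixed terms such as $(4x_1)c_j-c_j(4x_1)$ and $(64x_3)c_3-c_3(64x_3)=64u_3$. I would then use the coefficients $2$ and $4$ appearing in the $C_i$ to obtain the factor $\tfrac12$ relative to the naive bound. Concretely, I would expand
\[
(4x_1)(4x_1)=C_1^2,\quad (16x_2)^2=C_2^2,\quad (4x_1)(16x_2)+(16x_2)(4x_1)=C_1C_2+C_2C_1,
\]
and the analogous squares and symmetric products. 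The symmetric parts $x_ix_j+x_jx_i$ and $c_ic_j$ of these identities carry even coefficients coming from the $2c_3$, $4c_3$, $4c_2$, $2c_1$ in $C_i$. Comparing these with the coefficients of $x_i^2$ and $x_ix_j$ should let one divide a relation by $2$ inside $\varpi^{2N+4}$ after subtracting suitable multiples of other products. The result should isolate $32b_{12}+64b_{13}+128b_{23}$ modulo elements already known to lie in $\varpi^{2N+4}$, such as $64b_{12}$, $256b_{13}$, $1024b_{23}$, and multiples of $u_i$ of the form $[c_i,e_ix_i]$.

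\emph{Main obstacle.} The crux is finding the exact integer combination. Every identity obtained purely from Lie brackets reproduces only relations already valid in $L_N$ with the "wrong" powers of $2$. The gain must come from genuinely associative, non-Lie products such as $(4x_1)^2=C_1^2$, whose expansions have even cross coefficients. I expect a small linear-algebra computation over $\Z$ in the span of the monomials $x_ix_j$, $x_ic_k$ and $c_kc_l$ to settle which combination works. This computation would be carried out in the PBW basis of Corollary~\ref{cor:pbw-basis}, guided by the relator coefficients in \eqref{eq:shifted-relators}.
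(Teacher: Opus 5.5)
\textbf{There is a genuine gap: your proposal never produces the identity that proves the lemma.} You correctly note that $4x_1,16x_2,64x_3$ and $c_1,c_2,c_3$ lie in $\varpi(L_N)^{N+2}$, so any sum of products of two of them lies in $\varpi(L_N)^{2N+4}$. But you never exhibit such a sum equal to $a$, and the lemma is exactly that identity.

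The mechanism you anticipate is also not the right one:
\begin{itemize}
\item Even cross coefficients in squares such as $(4x_1)^2=C_1^2$ play no role.
\item ``Dividing a relation by $2$ inside $\varpi^{2N+4}$'' is not a valid step. In this very ring, $2a=0$ lies in every power of $\varpi(L_N)$, yet $a\notin\varpi(L_N)^{2N+5}$ (Lemma~\ref{lem:last-initial-form}).
\item Computing one product ``in two ways'' only gives $0=0$. What carries information is that each product lies in $\varpi(L_N)^{2N+4}$.
\item A search ``in the PBW basis'' presupposes the additive structure of $L_N$. In particular, you never identify the torsion of the $c_i$, which is essential.
\end{itemize}

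Here is what is missing. First, bracket each relator $r_i$ with $t$. Since $t,c_j\in L_N'$ and $L_N$ is metabelian, this gives $4c_1=16c_2=64c_3=0$. Combined with relation (i), it yields
\[
32x_2+64x_3=4(c_1+c_2-2c_3),\qquad -32x_1+128x_3=16(c_2+c_3),\qquad -64x_1-128x_2=64c_3 .
\]
Second, expand $a$ in $\U(L_N)$ and group the terms by their left factor:
\[
a=x_1(32x_2+64x_3)+x_2(-32x_1+128x_3)+x_3(-64x_1-128x_2).
\]
Substitute the three identities and move the scalars onto the left factors. This gives
\[
a=(4x_1)(c_1+c_2-2c_3)+(16x_2)(c_2+c_3)+(64x_3)c_3 ,
\]
which is visibly a sum of products of two elements of $\varpi(L_N)^{N+2}$. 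Hence $a\in\varpi(L_N)^{2N+4}$. Substituting relation (i) once more even gives $a=c_1c_3-c_2^2$.

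So the extra factor of $2$ you were looking for does not come from halving anything. It comes from the torsion relations on the $c_i$ together with associativity in $\U(L_N)$, which lets the scalars $4,16,64$ migrate from one tensor factor to the other. Your search space of products $(e_ix_i)c_j$ was the right one; the combination and the torsion input are what the proposal lacks.
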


\begin{proof}
Since $t,c_i\in L_N'$ and $L_N$ is metabelian, $[t,c_i]=0$.
Bracketing~\eqref{eq:shifted-relators} with $t$ gives
\begin{equation}\label{eq:c-orders}
                    4c_1=0,\qquad16c_2=0,\qquad64c_3=0.
\end{equation}
The relations~\eqref{eq:shifted-relators}, together with~\eqref{eq:c-orders}, yield
\[
 \begin{aligned}
 32x_2+64x_3&=4(c_1+c_2-2c_3),\\
 -32x_1+128x_3&=16(c_2+c_3),\\
 -64x_1-128x_2&=64c_3.
 \end{aligned}
\]
Expanding~\eqref{eq:a-definition} in $\U(L_N)$ and substituting these
identities gives
\begin{equation}\label{eq:a-factorization}
 \begin{aligned}
 a={}&(4x_1)(c_1+c_2-2c_3)
      +(16x_2)(c_2+c_3)+(64x_3)c_3\\
   ={}&c_1c_3-c_2^2.
 \end{aligned}
\end{equation}
For the last equality, substitute for $4x_1,16x_2,64x_3$ from
\eqref{eq:shifted-relators} and use the commutativity of $c_1,c_2,c_3$
in $\U(L_N)$.  Since each $c_i$ belongs to
$\gamma_{N+2}(L_N)\subseteq\varpi(L_N)^{N+2}$, the result follows.
\end{proof}

\begin{Lemma}\label{lem:critical-normal-form}
One has $D_{N+4}(L_N)=\langle a\rangle$.
\end{Lemma}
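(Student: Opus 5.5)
The inclusion $\langle a\rangle\subseteq D_{N+4}(L_N)$ is immediate. Lemma~\ref{lem:a-height} gives $a\in D_{2N+4}(L_N)$, and $D_{2N+4}(L_N)\subseteq D_{N+4}(L_N)$ because the dimension series is descending. The work is in the reverse inclusion, which I would split into a torsion statement and a location statement.

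\emph{Torsion.} By Lemma~\ref{lem:top-layer}, $\gamma_{N+4}(L_N)=0$ and $L_N$ is metabelian. Theorem~\ref{thm:Passi-Sicking} therefore gives $2\cdot D_{N+4}(L_N)\subseteq\gamma_{N+4}(L_N)=0$, so $D_{N+4}(L_N)$ is an elementary abelian $2$-group.

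\emph{Location.} I want $D_{N+4}(L_N)\subseteq\gamma_{N+3}(L_N)=\langle u_3\rangle\cong\Z/64\Z$. Once this holds, the only elements of order at most $2$ in $\langle u_3\rangle$ are the multiples of $32u_3$. By Lemma~\ref{lem:a-order}, $32u_3=a$, so $D_{N+4}(L_N)\subseteq\langle a\rangle$.

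To prove the location statement, I would use functoriality with respect to the projection $\pi\colon L_N\to M:=L_N/\gamma_{N+3}(L_N)$. It suffices to show $D_{N+4}(M)=0$, since then $\pi(D_{N+4}(L_N))=0$. The ring $M$ is metabelian of class $N+2$, so Theorem~\ref{thm:Passi-Sicking} again shows $D_{N+4}(M)$ is $2$-torsion. In $M$ the relations~\eqref{eq:six-relations} become $4b_{12}=16b_{12}=\cdots=0$, and the orders~\eqref{eq:c-orders} persist.

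The plan for $D_{N+4}(M)=0$ is the following.
\begin{enumerate}[(a)]
\item Write out an explicit additive basis of $M$ from the metabelian Hall basis modulo the relations (i)--(iii) and the reduced six relations.
\item Choose a PBW basis of $\U(M)$ via Corollary~\ref{cor:pbw-basis}.
\item Build a filtration $\Phi_\bullet$ on $\U(M)$ with $\varpi(M)^n\subseteq\Phi_n$. Assign each Hall basis element its weight. Define $\Phi_n$ as the span of PBW monomials of total weight at least $n$, after correcting the torsion generators $x_1,x_2,x_3$, whose multiples $4x_1,16x_2,64x_3$ are forced into weight $N+2$ by the relators $r_i$.
\item Check that $\Phi_n$ is a two-sided ideal filtration compatible with the relations, and that $\Phi_{N+4}\cap M=0$.
\end{enumerate}
Because $M$ has class $N+2$, no nonzero Lie element has weight $\ge N+4$, so step (d) would give $D_{N+4}(M)=0$.

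An alternative for the location statement avoids the filtration on $\U(M)$. One exhibits, for each nonzero $2$-torsion element $y\in M$, a representation of $\U(M)$ on a finite abelian group, annihilated by $\varpi(M)^{N+4}$, on which $y$ acts nontrivially. Such representations can be built as truncated induced modules, in the spirit of the homomorphism $\mu$ in the proof of Lemma~\ref{lem:top-layer}.

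The main obstacle is step (d) of the filtration plan, namely controlling the interaction between torsion and weight. The relators $r_i$ identify $e_ix_i$, a weight-$1$ element times an integer, with elements of weight $N+2$. This is exactly the mechanism that pushed $a$ deep into the dimension series in~\eqref{eq:a-factorization}. So the filtration must assign the products $e_ix_i$ high degree while keeping the weight-$\le N+2$ part of $M$ out of $\Phi_{N+4}$. I would first try a $2$-adic weighting: give $2^kx_i$ degree $\min(1+\text{bonus},\,N+2)$ with bonuses dictated by $e_i$. Then I would verify closure under the commutator relations on the finitely many generators of the lowest weights. I expect the remaining checks to be routine, though lengthy, Hall-basis computations.
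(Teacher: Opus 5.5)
Your outer frame is correct and matches the paper. You get $\langle a\rangle\subseteq D_{N+4}(L_N)$ from Lemma~\ref{lem:a-height}, and $2D_{N+4}(L_N)\subseteq\gamma_{N+4}(L_N)=0$ from Theorem~\ref{thm:Passi-Sicking}. The $2$-torsion of $\langle u_3\rangle\cong\Z/64\Z$ is $\langle 32u_3\rangle=\langle a\rangle$. The gap is the location statement $D_{N+4}(L_N)\subseteq\gamma_{N+3}(L_N)$, which carries the whole content of the lemma and which you do not prove.

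Steps (a)--(d) are a program, not an argument. A multiplicative filtration $\Phi_\bullet$ with $\varpi(M)^n\subseteq\Phi_n$ and $\Phi_{N+4}\cap M=0$ exists if and only if $D_{N+4}(M)=0$: just take $\Phi_n=\varpi(M)^n$. So the plan says nothing until a specific $\Phi_\bullet$ is defined and step (d) is verified, and the ``$2$-adic weighting'' is never specified. The representation-theoretic alternative is likewise a restatement, via $\U(M)/\varpi(M)^{N+4}$.

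Nor is the deferred check routine. The relators identify $4x_1$, $16x_2$, $64x_3$ with the weight-$(N+2)$ elements $-(c_2+2c_3)$, $c_1-4c_3$, $2c_1+4c_2$. This is exactly the mechanism that pushes $a$ into $\varpi(L_N)^{2N+4}$ via~\eqref{eq:a-factorization}. Showing that a corrected weight stays multiplicative modulo the commutation relations needs the kind of bookkeeping done for $\lambda$ in Lemma~\ref{lem:last-initial-form}, and you supply none of it.

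The missing ingredient is the Passi--Sicking reduction for metabelian Lie rings with a preabelian presentation. For $L_N$, order the generators $x_1<\cdots<x_5$ and take abelianization exponents $(e_1,\ldots,e_5)=(4,16,64,0,0)$. The reduction gives
\[
D_n(L_N)\subseteq\gamma_n(L_N)+\langle e_j[x_j,x_i]\mid i<j\rangle .
\]
For $n=N+4$ the first summand vanishes, and~\eqref{eq:six-relations} yields
\[
D_{N+4}(L_N)\subseteq\langle 16b_{12},64b_{13},64b_{23}\rangle=\langle u_1,2u_1,4u_2\rangle=\langle 16u_3\rangle\subseteq\gamma_{N+3}(L_N).
\]
This is your location statement in sharper form, obtained without passing to $M$ or building any filtration on $\U(M)$. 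Your torsion step then finishes the proof as you intended.

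The same reduction applied to $M$ gives $D_{N+4}(M)=0$ at once, since $u_1$ and $u_2$ vanish there. So your reduction to $M$ is valid; it is just not proved by what you wrote.
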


\begin{proof}
Take a free Lie presentation $L_N=E/R$ and put $S=E'+R$.
Since $L_N$ is metabelian, $[E',S]=E''+[E',R]\subseteq R$.
Thus the reduction of Passi and Sicking
\cite[\S~2]{PassiSicking2017}, applied to the preabelian presentation
with generators ordered as $x_1<\cdots<x_5$, gives
\[
 D_n(L_N)\subseteq
 \gamma_n(L_N)+
 \left\langle e_j[x_j,x_i]\mid i<j\right\rangle,
 \qquad
 (e_1,e_2,e_3,e_4,e_5)=(4,16,64,0,0).
\]
Since $\gamma_{N+4}(L_N)=0$,
relations~\eqref{eq:six-relations} give
\[
 D_{N+4}(L_N)\subseteq
 \langle16b_{12},64b_{13},64b_{23}\rangle
 =\langle u_1,2u_1,4u_2\rangle
 =\langle16u_3\rangle.
\]
Theorem~\ref{thm:Passi-Sicking} gives $2D_{N+4}(L_N)=0$.
As $u_3$ has order $64$, it follows that
$D_{N+4}(L_N)\subseteq\langle32u_3\rangle=\langle a\rangle$.
The reverse inclusion follows from Lemma~\ref{lem:a-height}, since
$2N+4\geq N+4$.
\end{proof}

\begin{Lemma}\label{lem:last-initial-form}
The element $a$ does not belong to $\varpi(L_N)^{2N+5}$.
\end{Lemma}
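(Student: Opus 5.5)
The plan is to produce an additive invariant that detects $a$ modulo $\varpi(L_N)^{2N+5}$. Concretely, I would construct an abelian group $A$ and an additive map $\phi\colon\U(L_N)\to A$ with $\phi(\varpi(L_N)^{2N+5})=0$ and $\phi(a)\neq0$. Since $a=32u_3$ has order two by Lemma~\ref{lem:a-order}, the natural target is $A=\Z/2\Z$, or a small $2$-group such as $\Z/4\Z$ if needed. I would build $\phi$ from an explicit additive basis of $\U(L_N)$. Choose a totally ordered set $\mathcal X$ of additive generators of $L_N$ adapted to the cyclic decomposition: the generators $x_i$, the elements $t,c_i,u_i$, and the remaining surviving Hall elements. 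Then Corollary~\ref{cor:pbw-basis} gives $\U(L_N)$ as a direct sum of cyclic groups spanned by ordered monomials, with known orders. If the additive group of $L_N$ is awkward because the relators $r_i$ make the $x_i$ non-split, I would first pass along a surjection $L_N\to M$ onto a quotient that kills everything not involved in~\eqref{eq:a-factorization}. By functoriality $a\mapsto\bar a$, so it suffices to show $\bar a\notin\varpi(M)^{2N+5}$, provided the image of $u_3$ in $M$ still has order $64$.

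The key device is a weight function on PBW monomials that is compatible with $\varpi^n$. The obvious choice, giving an element of $\gamma_w$ weight $w$, fails because the relators~\eqref{eq:shifted-relators} are not homogeneous: $4x_1$, $16x_2$, $64x_3$ have weight $1$ but equal elements of weight $N+2$. I would therefore use a $2$-adically twisted weight. For a monomial $2^k y_1\cdots y_r$ with $y_j\in\mathcal X$, set
\[
\nu=\sum_j w(y_j)+\lambda k,
\]
where $w(x_i)=1$, $w(t)=N+1$, $w(c_i)=N+2$, $w(u_i)=N+3$, and $\lambda$ is chosen so that multiplying $x_1,x_2,x_3$ by $4,16,64$ raises the weight to at least $N+2$. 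The factors $4,16,64$ and the exponents of $u_1=16u_3$, $u_2=4u_3$ are set up exactly so that a consistent choice exists. Let $F_n\subseteq\U(M)$ be the subgroup spanned by monomials with $\nu\geq n$.

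The steps are then as follows.
\begin{enumerate}[(1)]
\item Show that every element of $M$ lies in $F_1$, and that $F_pF_q\subseteq F_{p+q}$. This amounts to checking that the commutator rewriting needed to reorder monomials never lowers $\nu$, which holds because $[\gamma_p,\gamma_q]\subseteq\gamma_{p+q}$ and the relators are $\nu$-homogeneous up to higher terms. It follows that $\varpi(M)^{2N+5}\subseteq F_{2N+5}$.
\item Show, using the factorization $a=c_1c_3-c_2^2$ from~\eqref{eq:a-factorization}, that $a$ has a nonzero component of exact weight $2N+4$ modulo $F_{2N+5}$. One expresses $c_2=-4x_1-2c_3$ and $c_1=16x_2+4c_3$ in the PBW basis and reads off the coefficient of a single basis monomial, say $c_3^2$ or $x_2c_3$, reduced modulo $2$.
\end{enumerate}

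The main obstacle is step~(1): proving that the twisted filtration is genuinely multiplicative, that is, that the reordering relations $yz-zy=[y,z]$ together with the relations (i)--(iv) do not produce monomials of smaller $\nu$. Equivalently, $F_n$ must be well defined independently of the chosen expression. My first attempt would be to realize $\U(M)$ modulo $F_{2N+5}$ concretely, as a quotient of a polynomial ring over $\Z/2^{k}\Z$ in the images of $x_i$ and $c_i$, with $x_4,x_5$ acting through $t$. I would then verify directly that the defining relations hold there, and that every product of $2N+5$ elements of $M$ maps to zero in it. If the hand verification becomes unwieldy, the finite computation in low weight could be checked mechanically, as the paper does with GAP and Lean.
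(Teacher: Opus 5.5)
Your overall strategy, an additive invariant of $\U(L_N)$ that kills $\varpi(L_N)^{2N+5}$ but not $a$, matches the paper's. However, the mechanism you propose cannot work, and the failure already shows in the constraints you impose. Your twist requires $1+2\lambda\ge N+2$, i.e.\ $\lambda\ge (N+1)/2$. By construction $2^5F_n\subseteq F_{n+5\lambda}$, and step~(1) puts $u_3\in\gamma_{N+3}(L_N)$ into $F_{N+3}$. Hence $a=32u_3\in F_{N+3+5\lambda}\subseteq F_{2N+5}$, because $5(N+1)/2\ge N+2$. So steps~(1) and~(2) contradict each other for every admissible $\lambda$. The reason is that doubling does not raise $\varpi$-adic degree by a fixed amount. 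The element $2x_1$ has degree $1$, while $4x_1=-(c_2+2c_3)$ has degree at least $N+2$. Yet $u_3,2u_3,\dots,16u_3=u_1$ all have degree exactly $N+3$, by Lemma~\ref{lem:critical-normal-form}. Relatedly, the target $\Z/2\Z$ or $\Z/4\Z$ is impossible: since $a=32u_3$, any additive map into a group of exponent dividing $32$ kills $a$. The paper uses $\Z/256\Z$ and obtains the value $128$.

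Step~(2) also has no content as stated. In any PBW basis of $\U(L_N)$, or of a quotient $M$, the element $a\in L_N$ lies in the linear summand $M_1$, so it has no coefficient on $c_3^2$ or $x_2c_3$. The identity $a=c_1c_3-c_2^2$ only records that this quadratic expression collapses under $r_i=0$. Your proposed basis does not even exist there, since $64x_1=-32c_3\neq0$. Passing to a quotient cannot help, because the $r_i$ remain zero in every quotient.

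The missing idea is to go up instead. Let $T$ be the central extension in which only $[r_i,x_j]=0$ is imposed. Then $K=\langle r_1,r_2,r_3\rangle$ is central, $T=T'\oplus\bigoplus_j\Z x_j$, and $T'=\langle c_1\rangle\oplus\langle c_2\rangle\oplus\langle c_3\rangle\oplus A$ with $\gamma_{N+3}(T)\subseteq A$. On the PBW basis of $\U(T)$, define $\lambda\colon\U(T)\to\Z/256\Z$ to be $\mu$ on $T$, to take explicit values on seven quadratic monomials, and to vanish in length at least three. One checks that $\lambda(K\U(T))=0$, so $\lambda$ descends to $\U(L_N)$. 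One also checks that $\lambda(\varpi(T)^{2N+5})=0$: after collection, the only surviving terms are products of two commutators of total weight at least $2N+5$, and one factor lies in $\gamma_{N+3}(T)$. No multiplicative filtration is involved.
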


\begin{proof}
We construct an additive homomorphism from $\U(L_N)$ to $\Z/256\Z$ which vanishes on
$\varpi(L_N)^{2N+5}$ and takes the value $128$ on $a$.

First, let $T$ be obtained from $F_{\mathrm{met}}$ by imposing \textup{(ii)--(iv)} and
$[r_i,x_j]=0$ for $1\leq i\leq3$, $1\leq j\leq5$.
The ideal $K=\langle r_1,r_2,r_3\rangle$ is central and $T/K=L_N$.
All defining relations of $T$ lie in $F_{\mathrm{met}}'$, so
\[
 T=T'\oplus\bigoplus_{j=1}^5\Z x_j
 \quad\text{as abelian groups}.
\]
The images of the $r_i$ in $T/T'\cong\Z^5$ are the independent elements
$4x_1,16x_2,64x_3$.  Hence $K\cap T'=0$ and $T'\cong L_N'$.

Put $(e_1,e_2,e_3)=(4,16,64)$.  The Hall-coordinate maps $\rho_i$ which send
$c_i$ to $1\bmod e_i$ and all other Hall commutators to zero descend to $T'$:
relations \textup{(iii)--(iv)} have weight $N+3$, and a bracket of $r_j$
can contribute to the $c_i$-coordinate only by a multiple of $e_i$.
Also, \eqref{eq:c-orders} holds in $T$.  Setting $A=\bigcap_i\ker\rho_i$,
we obtain
\[
 T'=\langle c_1\rangle\oplus\langle c_2\rangle\oplus\langle c_3\rangle
       \oplus A,\qquad
 \operatorname{ord}(c_i)=e_i,\qquad \gamma_{N+3}(T)\subseteq A.
\]
Choose generators for a decomposition of $A$ as a direct sum of cyclic
groups.  Order these and $c_1<c_2<c_3$
before $x_1<\cdots<x_5$.

Use the PBW decomposition from Corollary~\ref{cor:pbw-basis}
to define an additive map $\lambda\colon\U(T)\to\Z/256\Z$.
On $T\subseteq\U(T)$, set
\[
 \lambda|_T\colon T\twoheadrightarrow L_N\xrightarrow{\mu}\Z/256\Z,
\]
where $\mu$ is defined as in the proof of Lemma~\ref{lem:top-layer}.
On ordered monomials of length two, prescribe the following nonzero values:
\[
 \begin{array}{c|rrrrrrr}
 M&x_1^2&x_1x_2&c_2x_1&c_2x_2&c_2^2&c_2c_3&c_3^2\\ \hline
 \lambda(M)&-8&-4&32&16&128&128&192.
 \end{array}
\]
Set $\lambda(1)=0$ and let $\lambda$ vanish on all remaining PBW monomials
of length at least two.
These values respect the additive orders of the monomials, so $\lambda$
is well defined.  In particular,
$\lambda(\gamma_{N+3}(T)\,T)=0$.

We check that $\lambda$ annihilates $K\U(T)$.
Write $r_i=e_ix_i+C_i$, with
\[
 C_1=c_2+2c_3,\qquad C_2=-c_1+4c_3,\qquad C_3=-2c_1-4c_2.
\]
The table and the values of $\mu$ give
$\lambda(r_i)=\lambda(r_ix_j)=\lambda(r_ic_j)=0$.
They also give $\lambda(r_iq)=0$ for every $q\in A$.
For a PBW monomial of length at least two, collect its product with $r_i$
by moving $e_ix_i$ to its ordered position.
Since $r_i$ is central and $T'$ is abelian,
$e_i[x_i,T']=0$ and
$e_i[x_i,x_j]=-[C_i,x_j]\in\gamma_{N+3}(T)$.
Thus collection gives monomials of length at least three and products
of an element of $\gamma_{N+3}(T)$ with a nonempty monomial.
All have zero image under $\lambda$.
Consequently, $\lambda$ descends to $\U(T)/K\U(T)=\U(L_N)$.

Finally, we prove that $\lambda(\varpi(T)^{2N+5})=0$.
The ideal $\varpi(T)^{2N+5}$ is generated additively by words of length
at least $2N+5$ in the $x_i$.
Collect such a word by placing all factors from $T'$ before the ordered
generators $x_i$.  Each resulting product consists of commutators whose
total weight is at least $2N+5$.
Terms with at least three factors have zero image under $\lambda$.
Terms with one factor vanish because $\gamma_{N+4}(T)=0$.
In a term with two factors, each factor has weight at most $N+3$,
so both have weight at least $N+2$ and one has weight $N+3$.
Its image under $\lambda$ is therefore also zero.
Hence $\lambda$ vanishes on $\varpi(L_N)^{2N+5}$, whereas
$\lambda(a)=\mu(32u_3)=128\neq0$.
\end{proof}

\begin{proof}[Proof of Theorem~\ref{thm:A}]
Take $m(N)=N+4$.  For $N+4\leq q\leq2N+4$,
Lemmas~\ref{lem:a-height} and~\ref{lem:critical-normal-form} give
\[
 \langle a\rangle\subseteq D_{2N+4}(L_N)
 \subseteq D_q(L_N)\subseteq D_{N+4}(L_N)=\langle a\rangle.
\]
Thus all these terms equal $\langle a\rangle\cong\Z/2\Z$ by
Lemma~\ref{lem:a-order}.
The next term is a subgroup of $\langle a\rangle$ and does not contain
$a$, by Lemma~\ref{lem:last-initial-form}; hence $D_{2N+5}(L_N)=0$.
\end{proof}

\subsection{A group-theoretic plateau}\label{subsec:group-plateau}

We now construct a group with two consecutive equal nontrivial dimension
subgroups followed by a trivial term, using
\cite[Example~2.16]{MikhailovPassi2009}.  We use the convention
$[x,y]=x^{-1}y^{-1}xy$.  Put
\[
                  d_i=[x_4,x_6,x_i],\qquad i\in\{1,2,3,5\},
\]
and let $\Gamma$ be the group generated by $x_1,\ldots,x_6$ with defining
relators
\begin{equation}\label{eq:group-plateau-presentation}
\begin{aligned}
 &x_1^8d_3^4d_2,\qquad
 x_2^{64}d_3^{-16}d_1^{-1}d_5^{16},\qquad
 x_3^{16384}d_2^{16}d_1^{-4},\qquad
 x_5^{1024}d_2^{16},\\
 &d_5^{2048},\qquad d_1^{16},\qquad d_2^{128},\qquad
 [d_1,x_1][d_2,x_2]^{-8},\qquad
 [d_2,x_2]^{-8}[d_3,x_3]^{2048}.
\end{aligned}
\end{equation}
(Here we have replaced $x_7$ by $x_6$ in the third
relator of the printed presentation, so that all $d_i$ have the form
$[c,x_i]$ for the same element $c=[x_4,x_6]$.
The calculation below uses
this corrected presentation.)
Set
\[
                  M=\Gamma/\bigl(\gamma_5(\Gamma)\Gamma''\bigr)
\]
and
\[
 w=[x_1,x_2^{256}]
   [x_1,x_3^{16384}]
   [x_2,x_3^{32768}].
\]
A computation with the \textsc{GAP} package \textsf{NQ}, reproduced in
\S~\ref{subsec:gap-verification}, gives
that the element $a=[x_4,x_6,x_3,x_3]\in\gamma_4(M)$ has order $2^{14}$ and
$w=a^{2^{13}}$.  Write
$\gamma_4(M)=\langle b_1\rangle\times\cdots\times\langle b_s\rangle$
in Smith normal form and $a=\prod_i b_i^{\alpha_i}$.  The computation
finds an index $j$ such that $b_j$ has order $2^{14}$ and $\alpha_j$ is odd.
For $C_{2^{14}}=\langle z\rangle$, define the epimorphism
\[
 \varphi\colon\gamma_4(M)\longrightarrow C_{2^{14}},\qquad
 \varphi(b_i)=
 \begin{cases}
 z^{\alpha_j^{-1}},&i=j,\\
 1,&i\neq j,
 \end{cases}
\]
where $\alpha_j^{-1}$ is taken modulo $2^{14}$.  Then $\varphi(a)=z$.
Since $\gamma_5(M)=1$, the subgroup $K=\ker\varphi\subseteq\gamma_4(M)$
is central in $M$.  Set $G=M/K$.  Then
\begin{equation}\label{eq:group-plateau-gap}
 \gamma_4(G)=\langle a\rangle\cong C_{2^{14}},\qquad
 a=[x_4,x_6,x_3,x_3],\qquad
 w=a^{2^{13}}\neq1.
\end{equation}
The identities $G''=1$ and $\gamma_5(G)=1$ follow from the definition of $M$.

To verify the dimension-subgroup assertion, put
\[
 \zeta_1=d_3^4d_5^{-4}d_2^{-1}[d_2,x_2]^2,
 \qquad
 \zeta_2=d_3[d_5,x_5]^{-4}.
\]
Expanding the relators~\eqref{eq:group-plateau-presentation} in the group
ring, as in \cite[Theorems~2.14--2.15 and Example~2.16]{MikhailovPassi2009},
gives
\begin{equation}\label{eq:group-augmentation-calculation}
 1-w\equiv
 (1-\zeta_1)(1-x_1^{16})+(1-\zeta_2)(1-x_2^{128})
 \pmod{\Delta(\Gamma)^6}.
\end{equation}
The elements $\zeta_1,\zeta_2$ lie in $\gamma_3(\Gamma)$ because each
$d_i$ does.  The first two defining relators also give
$x_1^8,x_2^{64}\in\gamma_3(\Gamma)$, and hence
$x_1^{16},x_2^{128}\in\gamma_3(\Gamma)$.  Thus both
products in~\eqref{eq:group-augmentation-calculation} lie in
$\Delta(\Gamma)^6$.  Hence $w\in D_6(\Gamma)$, and functoriality gives
$w\in D_6(G)$.

To bound $D_5(G)$ from above, first put $Q=G/\gamma_4(G)$.
The first four relators in
\eqref{eq:group-plateau-presentation} express powers of $x_1,x_2,x_3,x_5$
as elements of $\gamma_3(Q)\subseteq Z(Q)$.  Order the generators as
$(y_1,\ldots,y_6)=(x_3,x_5,x_2,x_1,x_4,x_6)$ and set
$(e_1,e_2,e_3,e_4)=(16384,1024,64,8)$.  The standard description of
$D_4(Q)$ in \cite[Theorem~2.22]{MikhailovPassi2009} expresses its elements
as products of commutators $[y_i^{e_i},y_j]$, with $1\leq i<j\leq4$,
subject to conditions on their exponents.  Each of these commutators is
trivial, since $y_i^{e_i}\in Z(Q)$.  Therefore $D_4(Q)=1$.
Functoriality gives $D_4(G)\subseteq\gamma_4(G)$, and the reverse
inclusion holds for every group.  Hence $D_4(G)=\gamma_4(G)$.
Moreover, the fifth-dimension-subgroup theorem
\cite[Theorem~2.28]{MikhailovPassi2009} gives
\[
                         D_5(G)^2\subseteq G''\gamma_5(G)=1.
\]
Thus $D_5(G)$ is contained in the unique subgroup of order two in the cyclic
group~\eqref{eq:group-plateau-gap}.  Since
$1\neq w\in D_6(G)\subseteq D_5(G)$, we obtain
\begin{equation}\label{eq:group-D5-D6}
                 D_5(G)=D_6(G)=\langle w\rangle\cong C_2.
\end{equation}

For finitely generated metabelian groups the dimension and lower central
series eventually coincide~\cite{GuptaHalesPassi1984}.
Since $G$ is finitely generated and $\gamma_5(G)=1$, there is
therefore a least integer $r$ with $D_r(G)=1$.
By~\eqref{eq:group-D5-D6}, $r\geq7$.
For $5\leq j<r$, the group $D_j(G)$ is a nontrivial subgroup of
$\langle w\rangle\cong C_2$, so it equals $\langle w\rangle$.
Taking $n=r-2$ gives
\[
                  D_n(G)=D_{n+1}(G)\cong C_2,
                  \qquad D_{n+2}(G)=1.
\]
\section{Centrality of dimension subrings}\label{sec:centrality}

\begin{Lemma}\label{lem:adjoint-enveloping-action}
Let $L$ be a Lie ring and $A$ be an ideal of $L$.  The adjoint action extends uniquely to a right
$\U(L)$-module structure on $A$, and
\[
                         A\cdot\varpi(L)^n=[A,{}_nL],\qquad n\geq0.
\]
\end{Lemma}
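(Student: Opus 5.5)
We construct the module structure from the universal property of $\U(L)$ and then prove the identity by induction on $n$. The adjoint action gives a Lie-ring homomorphism $\rho\colon L\to\operatorname{End}_{\Z}(A)^{\mathrm{op}}$, namely $\rho(x)(a)=[a,x]$. This is well defined because $A$ is an ideal. It is a Lie homomorphism into the opposite ring because, by the Jacobi identity,
\[
[a,[x,y]]=[[a,x],y]-[[a,y],x].
\]
The right-hand side is $(\rho(x)\rho(y)-\rho(y)\rho(x))$ computed in the opposite ring, where products are read left to right. By the universal property of $\U(L)$ (equivalently, of $T(L)$ modulo the defining relations), $\rho$ extends uniquely to a unital ring homomorphism $\U(L)\to\operatorname{End}_{\Z}(A)^{\mathrm{op}}$. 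Such a homomorphism is exactly a right $\U(L)$-module structure on $A$ with $a\cdot x=[a,x]$. Uniqueness holds because $\U(L)$ is generated as a ring by $1$ and $\iota(L)$, so any such extension is determined on generators.

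For the identity, the case $n=0$ is $A\cdot\U(L)^0=A\cdot\U(L)$ with the convention $\varpi(L)^0=\U(L)$. Since $1$ acts trivially and $\U(L)=\Z\cdot1+\varpi(L)$, we need $A\cdot\varpi(L)\subseteq A$, which is clear, so $A\cdot\U(L)=A=[A,{}_0L]$.

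The one substantive point is that $\varpi(L)$ is generated as a right (or left) ideal by $\iota(L)$, and hence
\[
\varpi(L)^n=\sum \iota(L)\cdots\iota(L)\,\U(L)\quad(n\text{ factors}),
\]
or more simply that $\varpi(L)^n$ is additively spanned by products $y_1\cdots y_k$ with $k\geq n$ and $y_i\in L$. This follows because $\varpi(L)$ is additively spanned by nonempty monomials in elements of $L$: the image of $\bigoplus_{k\geq1}L^{\otimes k}$ is a complement to $\Z\cdot1$ inside $\ker\varepsilon$.

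Granting this, the inclusion $A\cdot\varpi(L)^n\subseteq[A,{}_nL]$ holds because
\[
a\cdot y_1\cdots y_k=[a,y_1,\ldots,y_k]\in[A,{}_kL]\subseteq[A,{}_nL]
\]
for $k\geq n$, using that the series $[A,{}_kL]$ is descending since $A$ is an ideal. The reverse inclusion holds because the generator $[a,y_1,\ldots,y_n]$ equals $a\cdot(y_1\cdots y_n)$ with $y_1\cdots y_n\in\varpi(L)^n$, and $A\cdot\varpi(L)^n$ is an additive subgroup.

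We expect no real obstacle. The only care needed is the left/right convention: because brackets are left-normed, $L$ must act on the right, and the Jacobi identity must be checked to give a homomorphism into the opposite endomorphism ring. Beyond that, one only needs the observation that $\varpi(L)$ is additively spanned by nonempty monomials in $L$.
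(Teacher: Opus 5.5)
Your proof is correct and follows essentially the same route as the paper. You extend the adjoint action using the Jacobi identity: you phrase this through the universal property of $\U(L)$ into $\operatorname{End}_{\Z}(A)^{\mathrm{op}}$, while the paper checks that the defining relations of $\U(L)$ annihilate $A$ on the tensor algebra. You then prove both inclusions using the fact that $\varpi(L)^n$ is additively spanned by products of at least $n$ elements of $L$, as the paper does.
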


\begin{proof}
On the tensor algebra of $L$, define
\[
 a\cdot1=a,
 \qquad
 a\cdot(x_1\cdots x_r)=[a,x_1,\ldots,x_r].
\]
For $a\in A$ and $x,y\in L$, the Jacobi identity gives
\[
 [[a,x],y]-[[a,y],x]-[a,[x,y]]=0.
\]
Hence the defining relations of $\U(L)$ annihilate $A$, and the action
factors through $\U(L)$.  Uniqueness follows because $\U(L)$ is generated
as an algebra by the image of $L$.  The ideal $\varpi(L)^n$ is generated
additively by products of images of at least $n$ elements of $L$.  A product of length $r\geq n$
acts into $[A,{}_rL]\subseteq[A,{}_nL]$.  Conversely, every generator
$[a,x_1,\ldots,x_n]$ of $[A,{}_nL]$ is the action of
$x_1 \ldots x_n\in\varpi(L)^n$ on $a$.
\end{proof}

\begin{proof}[Proof of Theorem~\ref{thm:B}]
Let $A$ be an ideal of $L$, let $a\in A$, and let $x\in D_r(L)$.
Lemma~\ref{lem:adjoint-enveloping-action} gives
\[
 [a,x]=a\cdot x
       \in A\cdot\varpi(L)^r
       =[A,{}_rL].
\]
Thus $[A,D_r(L)]\subseteq[A,{}_rL]$.  Taking $A=L$ and using
skew-symmetry gives
$[D_r(L),L]\subseteq\gamma_{r+1}(L)$.  The inclusion
$\gamma_r(L)\subseteq D_r(L)$ gives the reverse containment:
\[
 \gamma_{r+1}(L)=[\gamma_r(L),L]
                  \subseteq[D_r(L),L].
\]
Hence $[D_r(L),L]=\gamma_{r+1}(L)$.  Since $D_n(L)$ is the
inverse image of the two-sided ideal $\varpi(L)^n$, it is an ideal of
$L$.  Applying $[A,D_r(L)]\subseteq[A,{}_rL]$ with $A=D_n(L)$ and $r=m$
gives
\[
 \begin{aligned}
 [D_n(L),D_m(L)]
 &\subseteq[D_n(L),{}_mL]\\
 &=[\gamma_{n+1}(L),{}_{m-1}L]
  =\gamma_{n+m}(L).
 \end{aligned}\qedhere
\]
\end{proof}

\section{A vanishing criterion for \texorpdfstring{$D_n(L)$}{D\_n(L)}}
\label{sec:relative-vanishing}

In this section, we describe a method for proving that $D_n(L)=0$ for a
Lie ring $L$ whose additive group is finitely generated. We will apply
this method to prove Theorem~\ref{thm:C} in~\S~\ref{sec:metabelian_lie_rings}.

Our method is to find an auxiliary object, which we call a
\textit{certificate}, whose properties imply that $D_n(L)=0$. Such a
certificate is a pair $(T,p)$, where $T$ is a central extension of $L$
and
\[
p\colon \U(T)\twoheadrightarrow P_{\leq 2}\U(T)
\]
is an additive retraction satisfying an additional condition. Before
stating this condition, observe that a homomorphism of Lie rings
$\pi\colon T\to L$ induces a homomorphism of filtered algebras
\[
\pi_*\colon \U(T)\longrightarrow \U(L).
\]
For additive subgroups $A,B\subseteq\U(T)$, we write
\(
A\cdot B=\langle ab\mid a\in A,\ b\in B\rangle
\)
for the additive subgroup generated by their products.
We can now state the main criterion.

\begin{Theorem}
\label{thm:vanishing-criterion}
Let $n\geq 1$ be an integer, and let
\[
0\longrightarrow K\longrightarrow T\xrightarrow{\pi}L
\longrightarrow 0
\]
be a central extension of Lie rings such that the additive group of $T$
is finitely generated. Let
\[
p\colon \U(T)\twoheadrightarrow P_{\leq 2}\U(T)
\]
be an additive retraction of the inclusion
\(P_{\leq 2}\U(T)\hookrightarrow\U(T)\) such that
\begin{equation}
\label{eq:vanishing-criterion-condition}
p\left(\pi_*^{-1}(\varpi(L)^n)\right)
\subseteq K+Z(T)\cdot T.
\end{equation}
Then \(D_n(L)=0\).
\end{Theorem}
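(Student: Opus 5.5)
The plan is to lift an arbitrary element of $D_n(L)$ to $T$, apply the retraction $p$, and then show that condition~\eqref{eq:vanishing-criterion-condition} forces the lift into $K$. Take $x\in D_n(L)$ and choose $y\in T$ with $\pi(y)=x$; this is possible since $\pi$ is surjective. Because $\pi_*$ restricts to $\pi$ on $T\subseteq\U(T)$, we have $\pi_*(y)=x\in\varpi(L)^n$, so $y\in\pi_*^{-1}(\varpi(L)^n)$. Moreover $y\in T\subseteq P_{\leq1}\U(T)\subseteq P_{\leq2}\U(T)$, and $p$ is a retraction onto $P_{\leq2}\U(T)$, so $p(y)=y$. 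Condition~\eqref{eq:vanishing-criterion-condition} therefore gives
\[
 y=k+\sum_i z_it_i,\qquad k\in K,\ z_i\in Z(T),\ t_i\in T .
\]
It then suffices to prove that $T\cap\bigl(K+Z(T)\cdot T\bigr)\subseteq K$ inside $\U(T)$. Indeed, then $y\in K$ and hence $x=\pi(y)=0$.

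The key step, and the expected main obstacle, is to show that an element $s=\sum_i z_it_i$ with $z_i$ central which happens to lie in $P_{\leq1}\U(T)$ must vanish. Here $s=y-k$ has augmentation $0$, so it lies in $\iota(T)$. The first observation is that the image of $s$ in $\gr_2\U(T)\cong\Sym^2_\Z(T)$ (Theorem~\ref{thm:pbw}) is $\sum_i z_i\cdot t_i$, and this must be $0$. Over $\Z$ this alone does not show that $s=0$, because lower-order corrections could appear. I would first try to exploit the finite generation of the additive group of $T$. Using Corollary~\ref{cor:pbw-basis}, I would choose a decomposition of $T$ into cyclic summands and write each $z_i$ and $t_i$ in the ordered basis $\mathcal X$. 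The point is that multiplying by a central $z_i$ needs no reordering correction: $z_it=tz_i$ exactly. Hence $z_it_i$ expands as $\sum_{u,v}\alpha_u\beta_v\,uv$, where $u$ runs over the basis summands of $z_i$ and $v$ over those of $t_i$. The plan is to show that the commutator corrections $[u,v]$ produced when ordering the products $uv$ cancel against each other, using $[z_i,t_i]=0$ and bilinearity. Then $s$ would equal an explicit element of $M_2$, and the direct sum decomposition $\U(T)=\bigoplus_r M_r$ would force $s\in M_1\cap M_2=0$.

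If this bookkeeping becomes unwieldy, a cleaner fallback is available. Since $z_i$ is central, the symmetric tensor $\tfrac12(z_it_i+t_iz_i)$ coincides with $z_it_i$, but the factor $\tfrac12$ is not available over $\Z$. I would therefore instead construct the additive map $Z(T)\otimes T\to\U(T)$, $z\otimes t\mapsto zt$, and compare it with the symmetrization map $T\otimes T\to\U(T)$ on ordered PBW monomials. The aim is to show that both induce the same map out of the image of $Z(T)\otimes T$ in $\Sym^2(T)$, which embeds into $M_2$ and meets $P_{\leq1}\U(T)$ trivially. Once $s=0$ is established, we get $y=k\in K$ and conclude $D_n(L)=0$.
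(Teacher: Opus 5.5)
\paragraph{Verdict: gap in the key step.} Your reduction coincides with the paper's. You lift $x$ to $y\in T$, use $p(y)=y$, and reduce to showing $T\cap\bigl(K+Z(T)\cdot T\bigr)\subseteq K$, i.e.\ $T\cap Z(T)\cdot T=0$ in $\U(T)$. The gap is in how you propose to prove this last fact.

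The cancellation planned in your first approach is false for an arbitrary cyclic decomposition of $T$. The identity $[z,t]=0$ says that $\sum_{u,v}\alpha_u\beta_v[u,v]$ vanishes when summed over \emph{all} pairs. The reordering correction, however, is the sum over the inverted pairs $u>v$ only. For instance, take the Heisenberg ring $[a,b]=c$ with ordered basis $u_1=a<u_2=b<u_3=a+c$. The element $c=u_3-u_1$ is central, yet
\[
 c\,b=u_3u_2-u_1u_2=(u_2u_3-u_1u_2)+c.
\]
So $c\,b$ has nonzero $M_1$-component, and $Z(T)\cdot T\not\subseteq M_2$.

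The fallback does not repair this. If ``both induce the same map'' means that $zt$ equals its ordered-monomial representative, it fails on the same example. If it only means that $z\otimes t\mapsto zt$ factors through the image of $Z(T)\otimes T$ in $\Sym^2(T)$, it is a restatement of the claim to be proved.

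\paragraph{What is true, and the missing idea.} What holds is only that the $M_1$-component of an element of $Z(T)\cdot T$ is a fixed additive function of its $M_2$-component. Producing that function is the content of Lemma~\ref{lem:quadratic-retraction}. The missing ingredient is an integral substitute for $\tfrac12[\,\cdot\,,\cdot\,]$ that factors through $T/Z(T)$: a biadditive map $H\colon T/Z(T)\times T/Z(T)\to T$ with $H(\bar x,\bar y)-H(\bar y,\bar x)=[x,y]$.

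The paper builds $H$ as follows.
\begin{enumerate}
\item Take a cyclic decomposition $T/Z(T)=\bigoplus\langle\bar v_i\rangle$. This must be a decomposition of $T/Z(T)$, not of $T$, since $Z(T)$ need not be a direct summand.
\item Set $H(\bar v_i,\bar v_j)=[v_i,v_j]$ for $i>j$ and $0$ otherwise. This is well defined because brackets with central elements vanish, which handles both the choice of lifts and the relations $e_iv_i\in Z(T)$.
\item Define $r(1)=0$, $r|_T=\mathrm{id}$, and $r(x_ix_j)=H(\bar x_i,\bar x_j)$ on ordered monomials. This gives an additive map $r\colon P_{\leq2}\U(T)\to T$ with $r(xy)=H(\bar x,\bar y)$ for all $x,y\in T$.
\end{enumerate}
Hence $r(Z(T)\cdot T)=0$, and so $y=r(y)=r(k)+r(s)=k$.

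In the language of your fallback, the correct quadratic section is $x\cdot y\mapsto xy-H(\bar x,\bar y)$, not the ordered-monomial section. It is well defined on $\Sym^2(T)$, sends $z\cdot t$ to $zt$ for central $z$, and its image meets $P_{\leq1}\U(T)$ trivially.
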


We next state a technical lemma, derive
Theorem~\ref{thm:vanishing-criterion} from it, and then prove the lemma.

\begin{Lemma}
\label{lem:quadratic-retraction}
Let $T$ be a Lie ring whose additive group is finitely generated. Then the
inclusion $T\hookrightarrow P_{\leq 2}\U(T)$ admits an additive
retraction
\(
r\colon P_{\leq 2}\U(T)\twoheadrightarrow T
\)
such that $r|_{Z(T)\cdot T}=0$.
\end{Lemma}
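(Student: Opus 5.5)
We use the explicit form of the PBW theorem, Corollary~\ref{cor:pbw-basis}, with a basis adapted to $Z(T)$.

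\emph{Adapted basis.} Since the additive group of $T$ is finitely generated, the subgroup $Z(T)$ is finitely generated as well. It is not a direct summand in general, since $T/Z(T)$ may have torsion. So we proceed as follows.
\begin{enumerate}[1.]
 \item Choose a cyclic decomposition $T=\bigoplus_{x\in\mathcal X}\langle x\rangle$ for which $Z(T)$ is spanned by multiples of basis elements. Concretely, by the elementary divisor theorem for the pair $Z(T)\subseteq T$ in the torsion-free part, together with a compatible treatment of the torsion, there is a decomposition with $Z(T)=\bigoplus_x\langle d_x x\rangle$ for suitable integers $d_x$. If this fails because of torsion, we fall back on the argument below, which does not need it.
 \item By Corollary~\ref{cor:pbw-basis}, $P_{\leq2}\U(T)=\Z\cdot1\oplus T\oplus M_2$.
 \item Define $r$ to be zero on $\Z\cdot 1$, the identity on $T$, and a carefully chosen additive map $\rho\colon M_2\to T$.
\end{enumerate}

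\emph{What $\rho$ must satisfy.} Take $z\in Z(T)$ and $y\in T$, and expand $zy$ in the PBW basis. Since $[z,y]=0$, the product $zy$ is symmetric and has no linear term. Writing $z=\sum\alpha_x x$ and $y=\sum\beta_{x'}x'$, one gets $zy=\sum\alpha_x\beta_{x'}\,xx'$. Reordering $xx'$ into PBW order produces the linear correction $[x,x']$, and these corrections sum to $[z,y]=0$ only after collecting. The requirement is therefore
\[
r(zy)=\textstyle\sum\alpha_x\beta_{x'}\,r(xx')=0 .
\]

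\emph{Simplest choice.} Take $\rho=0$, so that $r$ is the projection onto the $T$-summand in the decomposition $\Z\oplus T\oplus M_2$. The linear component of $zy$ is the sum of the correction terms $[x,x']$ over the pairs with $x>x'$. This is not obviously zero, because it equals only the "antisymmetric half" of $[z,y]$.

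\emph{Better choice.} Define $\rho$ on an ordered monomial by
\[
\rho(xx')=\tfrac12[x',x]\quad\text{for } x\le x',
\]
which would make $r(xx')=\tfrac12[x,x']$ for all $x,x'$. Then
\[
r(zy)=\tfrac12[z,y]=0 .
\]
The obstruction is the factor $\tfrac12$, and this is the main obstacle.

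\emph{Handling the factor $\tfrac12$.} We instead use the decomposition of $Z(T)\cdot T$ directly. First note that $Z(T)\cdot T$ is spanned by products $zy$ with $z\in Z(T)$, and that each such $zy$ lies in $T\oplus M_2$. The key claim is
\[
(Z(T)\cdot T)\cap T=0
\]
inside $P_{\leq2}\U(T)/\Z$.

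To prove the claim, suppose $\sum z_iy_i=t\in T$. Pass to $\gr_2\U(T)\cong\Sym^2(T)$: there we have $\sum z_i\odot y_i=0$. Using the adapted basis from step~1, we show that this forces $t=\sum\pm[z_i,y_i]=0$. The argument is: lift the relation in $\Sym^2$ to a combination of symmetric relations $x\otimes x'-x'\otimes x$ and torsion relations, each of which contributes linear terms that are brackets of central elements.

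Granting the claim, $T\oplus Z(T)\cdot T$ is a subgroup of $P_{\leq2}\U(T)$ that contains $T$ as a complement-free summand. It remains to extend the projection $T\oplus Z(T)T\to T$ to all of $P_{\leq2}\U(T)$. For this we need $T+Z(T)T$ to be a direct summand of $P_{\leq2}\U(T)$ modulo $T$, i.e.\ the image of $Z(T)T$ in $M_2$ should be a summand. Choosing the basis in step~1 compatibly with $Z(T)$, that image is spanned by the monomials $d_x\,xx'$, and we verify splitting for these. We expect this torsion bookkeeping to be the hardest part of the argument.
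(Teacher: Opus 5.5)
Your proposal has genuine gaps. First, the adapted decomposition of step~1 does not exist in general, and centres can fail to be adapted. For example, take $T=(\Z/2)a\oplus(\Z/8)b\oplus(\Z/4)e\oplus(\Z/4)g$ with $[a,e]=2g$, $[b,e]=g$ and all other brackets of generators zero. Then $Z(T)=\langle a+2b\rangle\oplus\langle g\rangle$, and one checks $Z(T)\cap 2T=2Z(T)$ but $Z(T)\cap 4T=\langle 4b\rangle\neq0=4Z(T)$. For an adapted $Z(T)$ the first equality would make it a direct summand, contradicting the second. You say the later argument does not need step~1, but its last paragraph relies on it.

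Second, your justification of $(Z(T)\cdot T)\cap T=0$ is wrong. A relation $\sum z_i\odot y_i=0$ in $\Sym^2(T)$ lifts to $\sum z_i\otimes y_i=\sum_k(a_k\otimes b_k-b_k\otimes a_k)$ in $T\otimes T$ with \emph{arbitrary} $a_k,b_k$. So the linear term is $t=\sum_k[a_k,b_k]$, a sum of brackets of non-central elements, and nothing in your sketch makes it vanish. To show $t=0$ you would apply a biadditive $\phi\colon T\times T\to T$ that vanishes when an argument is central and satisfies $\phi(a,b)-\phi(b,a)=[a,b]$. Constructing such a $\phi$ is exactly the missing step.

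Third, your extension criterion fails: the image of $Z(T)\cdot T$ in $M_2$ is usually not a direct summand. For $T=\Z x\oplus\Z y\oplus(\Z/2)w$ with $[x,y]=w$, the centre $Z(T)=\langle 2x,2y,w\rangle$ is even adapted. Yet the image contains $2x^2$ but not $x^2$, and it contains all torsion of $\Sym^2(T)$, so it has no complement. The lemma nevertheless holds for this $T$, so ``verifying splitting'' in your sense cannot work.

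The missing idea is an integral substitute for your $\tfrac12[\cdot,\cdot]$. The paper chooses a cyclic decomposition $T/Z(T)=\bigoplus_i\langle\bar v_i\rangle$ with lifts $v_i$. It defines a biadditive $H\colon T/Z(T)\times T/Z(T)\to T$ by $H(\bar v_i,\bar v_j)=[v_i,v_j]$ for $i>j$ and $0$ for $i\le j$. This is independent of the lifts and respects the orders, because $e_i\bar v_i=0$ forces $e_iv_i\in Z(T)$, hence $e_i[v_i,v_j]=0$. It also satisfies $H(\bar x,\bar y)-H(\bar y,\bar x)=[x,y]$. Then, on an \emph{arbitrary} ordered cyclic basis of $T$, set $r(x_ix_j)=H(\bar x_i,\bar x_j)$ for $x_i\le x_j$. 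This respects the additive order of $x_ix_j$, the gcd of the orders, by B\'ezout. The rule $x_jx_i=x_ix_j+[x_j,x_i]$ then gives $r(xy)=H(\bar x,\bar y)$ for all $x,y\in T$, which is $0$ whenever $x\in Z(T)$.

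Your ``simplest choice'' $\rho=0$ is exactly this triangular half of the bracket, but taken with respect to a basis of $T$ rather than of $T/Z(T)$; that is why it fails. When an adapted basis does exist, $\rho=0$ in fact works: $(d_xx)x'$ is either an ordered monomial or equals $d_x\,x'x+[d_xx,x']=d_x\,x'x$, so it has no linear part. The only real difficulty is therefore the non-existence of adapted bases, which passing to $T/Z(T)$ sidesteps.
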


\begin{proof}[Proof of Theorem~\ref{thm:vanishing-criterion}]
Let $x\in D_n(L)$, and choose a lift $\widetilde{x}\in T$ along $\pi$.
Then
\(
\widetilde{x}\in\pi_*^{-1}\left(\varpi(L)^n\right)\).
Let $r$ be the retraction from Lemma~\ref{lem:quadratic-retraction}.
Since $p$ and $r$ restrict to the identity on $T$,
condition~\eqref{eq:vanishing-criterion-condition} gives
\[
\widetilde{x}=r\bigl(p(\widetilde{x})\bigr)
\in r\bigl(K+Z(T)\cdot T\bigr)=K.
\]
Thus $x=\pi(\widetilde{x})=0$.
\end{proof}

\begin{proof}[Proof of Lemma~\ref{lem:quadratic-retraction}]
We write bars for classes modulo $Z(T)$. We will
construct a biadditive map
\[
H\colon T/Z(T)\times T/Z(T)\longrightarrow T
\]
such that
\begin{equation}
\label{eq:condition_on_H}
H(\bar{x},\bar{y})-H(\bar{y},\bar{x})
=[x,y],\qquad x,y\in T.
\end{equation}

We first show that any such map $H$ determines the required retraction
$r$. Since the additive group of $T$ is finitely generated, there is
a decomposition \(
T=\bigoplus_{i=1}^k\langle x_i\rangle
\) into a direct sum of cyclic subgroups.
Order the chosen generators by $x_1<\cdots<x_k$.
By Corollary~\ref{cor:pbw-basis}, there is a direct sum decomposition
\[
P_{\leq 2}\U(T)
=\Z\cdot 1
\oplus\underbrace{\langle x_i\mid 1\leq i\leq k\rangle}_{=T}
\oplus\langle x_i x_j\mid 1\leq i\leq j\leq k\rangle.
\]
Set
\begin{equation}
\label{eq:r_def}
r(1)=0,\qquad
r(x_i)=x_i,\qquad
r(x_i x_j)=H(\bar{x}_i,\bar{x}_j),\qquad i\leq j.
\end{equation}
Suppose that elements $x_i$ and $x_j$ of $T$ have orders $d_i$
and $d_j$, respectively. The element $x_i x_j$ has additive order
$d=\gcd(d_i,d_j)$ in $\U(T)$. Choose integers $a,b$ such that
$d=a d_i+b d_j$. By biadditivity,
\begin{align*}
dH(\bar{x}_i,\bar{x}_j)
&=(a d_i+b d_j)H(\bar{x}_i,\bar{x}_j)\\
&=aH(d_i\bar{x}_i,\bar{x}_j)
  +bH(\bar{x}_i,d_j\bar{x}_j)=0.
\end{align*}
Thus
\eqref{eq:r_def} defines an additive homomorphism
\(
r\colon P_{\leq 2}\U(T)\longrightarrow T
\)
whose restriction to $T$ is the identity.

For $i\leq j$, relation \eqref{eq:condition_on_H} gives
\begin{align*}
r(x_jx_i)
&=r(x_ix_j+[x_j,x_i])\\
&=H(\bar{x}_i,\bar{x}_j)+[x_j,x_i]\\
&=H(\bar{x}_i,\bar{x}_j)
  +\left(H(\bar{x}_j,\bar{x}_i)-H(\bar{x}_i,\bar{x}_j)\right)\\
&=H(\bar{x}_j,\bar{x}_i).
\end{align*}
It follows by biadditivity that
\[
r(xy)=H(\bar{x},\bar{y}),\qquad x,y\in T.
\]
If $z\in Z(T)$, then $\bar z=0$, and hence
$r(zt)=H(\bar z,\bar t)=0$ for every $t\in T$. Therefore
$r$ vanishes on $Z(T)\cdot T$ as required.

It remains to construct $H$. Choose a decomposition \(
T/Z(T)=\bigoplus_{i=1}^s\langle\bar v_i\rangle
\) of the additive group
of $T/Z(T)$
into cyclic groups, and choose a lift $v_i\in T$ of each $\bar v_i$.
Set
\begin{equation}\label{eq:H_definition}
H(\bar v_i,\bar v_j)=
\begin{cases}
[v_i,v_j],& i>j,\\
0,& i\leq j.
\end{cases}
\end{equation}
Since the Lie bracket vanishes whenever one of its arguments is central,
this formula is independent of the chosen lifts. It also respects the
cyclic relations: if $e_i\bar v_i=e_j\bar v_j=0$ in $T/Z(T)$, then
$e_i v_i,e_j v_j\in Z(T)$, and consequently
\begin{align*}
e_i[v_i,v_j]&=[e_i v_i,v_j]=0,\\
e_j[v_i,v_j]&=[v_i,e_j v_j]=0.
\end{align*}
Thus the formula~\eqref{eq:H_definition} defines a biadditive
map $H$. Condition \eqref{eq:condition_on_H} holds by construction.
\end{proof}

\subsection{A convenient condition on certificates for nilpotent Lie rings}

We record a convenient condition on a pair $(T,p)$ that guarantees that
Theorem~\ref{thm:vanishing-criterion} applies. For $r\geq 1$, set
\[
W_r(T)=\gamma_r(T)
+\sum_{i=1}^{r-1}\gamma_i(T)\cdot\gamma_{r-i}(T)
\subseteq P_{\leq 2}\U(T).
\]
Note that
\begin{equation}
\label{eq:W_q_inclusion}
W_q(T)\subseteq W_r(T), \quad q \geq r\geq2.
\end{equation}
Indeed,
$\gamma_q(T)\subseteq\gamma_r(T)$, and for $i+(q-i)=q$ one can choose
$a,b\geq 1$ such that
\[
a+b=r,
\qquad a\leq i,
\qquad b\leq q-i,
\]
so that
\[
\gamma_i(T)\cdot\gamma_{q-i}(T)
\subseteq\gamma_a(T)\cdot\gamma_b(T)
\subseteq W_r(T).
\]

\begin{Corollary}
\label{cor:nilpotent-certificate}
Let $c\geq 2$ be an integer, and let
\[
0\longrightarrow K\longrightarrow T\xrightarrow{\pi}L
\longrightarrow 0
\]
be a central extension of Lie rings such that the additive group of $T$
is finitely generated and $\gamma_{c+1}(T)=0$. Let
\[
p\colon \U(T)\twoheadrightarrow P_{\leq 2}\U(T)
\]
be an additive retraction such that
\begin{align}
p\left(\varpi(T)^{2c-1}\right)
&\subseteq W_{2c-1}(T),
\label{eq:p-augmentation-condition}\\
p\left(K\cdot \U(T)\right)
&\subseteq K+Z(T)\cdot T.
\label{eq:p-kernel-condition}
\end{align}
Then
\(D_{2c-1}(L)=0\).
\end{Corollary}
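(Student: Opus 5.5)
We reduce Corollary~\ref{cor:nilpotent-certificate} to Theorem~\ref{thm:vanishing-criterion} with $n=2c-1$. It suffices to verify condition~\eqref{eq:vanishing-criterion-condition}:
\[
p\bigl(\pi_*^{-1}(\varpi(L)^{2c-1})\bigr)\subseteq K+Z(T)\cdot T.
\]

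The first step is to describe $\pi_*^{-1}(\varpi(L)^{2c-1})$. Since $\pi$ is surjective with kernel $K$, the induced map $\pi_*\colon\U(T)\to\U(L)$ is surjective. Its kernel is the two-sided ideal generated by $K$, which equals $K\cdot\U(T)$ because $K$ is central. Also $\pi_*(\varpi(T)^{2c-1})=\varpi(L)^{2c-1}$, since $\varpi(L)$ is generated as an ideal by $L=\pi(T)$. Hence
\[
\pi_*^{-1}(\varpi(L)^{2c-1})=\varpi(T)^{2c-1}+K\cdot\U(T).
\]
Because $p$ is additive, hypotheses \eqref{eq:p-augmentation-condition} and \eqref{eq:p-kernel-condition} reduce the problem to proving
\[
W_{2c-1}(T)\subseteq K+Z(T)\cdot T.
\]
In fact we will show $W_{2c-1}(T)\subseteq Z(T)\cdot T$.

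The second step is the main point, and it is a weight count. By definition,
\[
W_{2c-1}(T)=\gamma_{2c-1}(T)+\sum_{i=1}^{2c-2}\gamma_i(T)\cdot\gamma_{2c-1-i}(T).
\]
Since $c\geq2$, we have $2c-1\geq c+1$, so $\gamma_{2c-1}(T)=0$.

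For each product term, $i+(2c-1-i)=2c-1$, so one of the two indices is at least $c$. If both were at most $c-1$, their sum would be at most $2c-2$. Because $\gamma_{c+1}(T)=0$, we have $\gamma_c(T)\subseteq Z(T)$. If $i\geq c$, then $\gamma_i(T)\cdot\gamma_{2c-1-i}(T)\subseteq Z(T)\cdot T$ directly. If instead $2c-1-i\geq c$, the central factor stands on the right. A central element commutes with everything in $\U(T)$, since $zt-tz=[z,t]=0$ there. So $t z=z t\in Z(T)\cdot T$, and again the term lies in $Z(T)\cdot T$.

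Combining the two steps,
\[
p\bigl(\pi_*^{-1}(\varpi(L)^{2c-1})\bigr)\subseteq W_{2c-1}(T)+K+Z(T)\cdot T=K+Z(T)\cdot T.
\]
Theorem~\ref{thm:vanishing-criterion} then gives $D_{2c-1}(L)=0$.

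The only point needing care is the identification $\ker\pi_*=K\cdot\U(T)$ and the equality $\pi_*(\varpi(T)^{2c-1})=\varpi(L)^{2c-1}$. Both follow from the standard fact that $\U$ takes a surjection to a surjection whose kernel is the ideal generated by the Lie kernel, together with centrality of $K$. The rest is bookkeeping.
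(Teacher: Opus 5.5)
Your proof is correct and follows essentially the paper's argument: you identify $\pi_*^{-1}(\varpi(L)^{2c-1})$ with $\varpi(T)^{2c-1}+K\cdot\U(T)$ using centrality of $K$, show $W_{2c-1}(T)\subseteq Z(T)\cdot T$ from $\max\{i,j\}\geq c$ and $\gamma_c(T)\subseteq Z(T)$, and then apply Theorem~\ref{thm:vanishing-criterion}. Your extra remarks, that a central factor on the right can be moved to the left and that $\pi_*(\varpi(T)^{n})=\varpi(L)^{n}$, only make explicit details the paper leaves implicit.
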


\begin{proof}
The kernel of the induced epimorphism
\(
\pi_*\colon \U(T)\twoheadrightarrow \U(L)
\)
is $K\cdot \U(T)$, since $K\subseteq Z(T)$. Consequently,
\[
\pi_*^{-1}\left(\varpi(L)^n\right)
=\varpi(T)^n+K\cdot \U(T),\qquad n\geq 1.
\]

Condition \eqref{eq:p-augmentation-condition} gives
\begin{equation}
\label{eq:inclusion_in_wr_subgroup}
p\left(\varpi(T)^{2c-1}\right)
\subseteq\gamma_{2c-1}(T)
+\sum_{\substack{i,j\geq 1\\i+j=2c-1}}
\gamma_i(T)\cdot\gamma_j(T).
\end{equation}
Since $2c-1\geq c+1$, we have $\gamma_{2c-1}(T)=0$. For every
quadratic term $\gamma_i(T)\cdot\gamma_j(T)$ in the sum in
\eqref{eq:inclusion_in_wr_subgroup}, either $i\geq c$ or $j\geq c$.
The corresponding factor is therefore contained in
$\gamma_c(T)\subseteq Z(T)$. Hence
\[
W_{2c-1}(T)\subseteq Z(T)\cdot T.
\]
Conditions \eqref{eq:p-augmentation-condition} and
\eqref{eq:p-kernel-condition} now imply
\eqref{eq:vanishing-criterion-condition} for $n=2c-1$.
Theorem~\ref{thm:vanishing-criterion} completes the proof.
\end{proof}

\section{Metabelian nilpotent Lie rings}
\label{sec:metabelian_lie_rings}
The purpose of this section is to prove the following theorem.

\begin{Theorem}
\label{thm:main_thm_for_finitely_generated}
Let $c\geq 2$ be an integer, and let $L$ be a metabelian Lie ring such that the
additive group of $L$ is finitely generated and
$\gamma_{c+1}(L)=0$. Then
\(D_{2c-1}(L)=0\).
\end{Theorem}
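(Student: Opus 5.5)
The plan is to apply Corollary~\ref{cor:nilpotent-certificate}. This means building a central extension $T\to L$ with $\gamma_{c+1}(T)=0$ and finitely generated additive group, together with a retraction $p\colon\U(T)\to P_{\leq2}\U(T)$ satisfying \eqref{eq:p-augmentation-condition} and \eqref{eq:p-kernel-condition}.

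For $T$ I would mimic the construction in the proof of Lemma~\ref{lem:last-initial-form}. Choose generators $x_1,\dots,x_k$ of $L$ modulo $L'$, adapted to a cyclic decomposition of $L/L'$ with orders $e_j$. Write the relators $e_jx_j=C_j$ with $C_j\in L'$. Let $F$ be the free metabelian Lie ring of class $\le c$ on the $x_j$, and let $T$ be the quotient of $F$ by all relations of $L$ lying in $F'$, together with the brackets $[e_jx_j-C_j,y]$ for $y\in F$. Then $K=\langle e_jx_j-C_j\rangle$ is central, $T/K=L$, and $T$ is metabelian of class $\le c$. Moreover $T=T'\oplus\bigoplus\Z x_j$ additively with $T'\cong L'$, so $T$ is finitely generated. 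The key gain is that $K\cap T'=0$ and $K$ is free abelian on the images of the $e_jx_j-C_j$.

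Next I would define $p$ through a PBW basis (Corollary~\ref{cor:pbw-basis}). Take a basis of $T'$ first, adapted to the filtration $\gamma_i(T)\cap T'$, and order it before $x_1<\dots<x_k$. Every element of $\U(T)$ is uniquely a combination of ordered monomials $q_1\cdots q_s\,x^{\alpha}$ with $q_i\in T'$. I would send monomials of length $\le2$ to themselves and kill all monomials of length $\ge3$. Condition \eqref{eq:p-augmentation-condition} then follows as in the final paragraph of the proof of Lemma~\ref{lem:last-initial-form}. Collecting a word of length $\ge 2c-1$ by moving commutators to the left produces products of commutators of total weight $\ge2c-1$. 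Since $T'$ is abelian and $[T',x]\subseteq T'$, the collection only creates brackets of a commutator with generators, which raises weight. Hence the terms of length $\le2$ land in $\gamma_{2c-1}(T)+\sum\gamma_i(T)\gamma_{2c-1-i}(T)=W_{2c-1}(T)$. One has to check that the monomials $x_ix_j$ themselves only appear with total weight $2$, which is below $2c-1$, so they do not occur.

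The main obstacle is \eqref{eq:p-kernel-condition}: we need $p(k\,m)\in K+Z(T)\cdot T$ for $k=e_jx_j-C_j$ and each PBW monomial $m$. For $m=1$ this is clear. For $m$ a single basis element $y$, we have $k y=e_jx_jy-C_jy$. When $y\in T'$, reordering gives $e_jx_jy=e_j\,yx_j+e_j[x_j,y]$, and $e_j[x_j,y]=[C_j,y]=0$ since $T'$ is abelian. So we need to show that the quadratic element $e_j yx_j-C_jy$ lies in $Z(T)\cdot T$; this is the hard part. If it fails, I would modify $p$ on length-two monomials by a correction, as the table defining $\lambda$ did. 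That is, choose $p$ on monomials $yx_j$ and $x_ix_j$ so that these specific combinations map into $K+Z(T)T$, using that $e_j y x_j$ is related to $e_j[x_j,y]$-type terms. For $m$ of length $\ge2$, $p(km)$ only involves terms where reordering produced commutators of weight at least $2$ times a generator, and metabelianity plus centrality of $K$ should force these into $\gamma_c\cdot T\subseteq Z(T)\cdot T$. If that fails, I would try to reduce to $L$ cyclic-by-metabelian pieces via Theorem~\ref{thm:Passi-Sicking}, which already gives $2D_{2c-1}(L)\subseteq\gamma_{2c-1}(L)=0$. This reduces the problem to the $2$-torsion part, where a direct computation with $\lambda$-type functionals may suffice.
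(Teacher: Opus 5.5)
Your overall route is the paper's. Your $T=F/(R\cap F')$ is, up to isomorphism, the fibre product $L\times_{L_{\ab}}P$ of \S\ref{subsec:construction-of-T}. Your $p$ is the same truncation of the PBW decomposition with a basis of $T'$ placed first. Your argument for \eqref{eq:p-augmentation-condition} is the collection argument of Lemma~\ref{lem:ordered-products-with-commutators} and Proposition~\ref{prop:projection-properties}(1). (A basis of $T'$ adapted to the $\gamma_i(T)$ need not exist when $T'$ has torsion, but none is needed: each collected product $c_1\cdots c_hy_1\cdots y_v$ already lies in $M_{h+v}$ because $T'\cdot M_q\subseteq M_{q+1}$.)

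The gap is in \eqref{eq:p-kernel-condition}, which carries the real content, and you have put its difficulty in the wrong place. For a single basis element $y$ nothing needs proving: $k_jy\in P_{\le2}\U(T)$ is fixed by $p$ and lies in $K\cdot T\subseteq Z(T)\cdot T$, since $K$ is central. The work is for monomials of length $q\ge2$, and there your heuristic fails. Moving $e_jx_j$ past a smaller basis element $y$ creates the correction term $e_j[x_j,y]$, which has weight $2$ and need not lie in $\gamma_c(T)$. Unless it is central, bringing the correction terms into PBW form produces further terms such as $-e_j[x_j,y',y]$ of PBW length one, which $p$ keeps.

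The missing idea is Lemma~\ref{lem:central-multiple-commutator}: $e_j[x_j,y]\in Z(T)$ for every basis element $y<x_j$.
For $y\in T'$ this is your own computation $e_j[x_j,y]=[C_j,y]=0$.
For $y=x_i$, the Jacobi identity gives $e_j[[x_j,x_i],z]=-[[x_i,z],e_jx_j]-[[z,x_j],e_jx_i]$. The first term vanishes because $e_jx_j\in K+T'$ and $[T',K+T']=0$. The second term vanishes only when $e_jx_i\in K+T'$, which is guaranteed by $e_i\mid e_j$.

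So you must take an invariant factor decomposition of $L_{\ab}$ and order the $x_i$ so that the orders divide along the ordering. This is not cosmetic. Suppose $e_1=4$, $e_2=2$ and $x_1<x_2$. Then $p(k_2x_1^2)=4[x_2,x_1]x_1-2[x_2,x_1,x_1]$, with $4[x_2,x_1]$ central. The retraction $r$ of Lemma~\ref{lem:quadratic-retraction}, together with $K\cap T'=0$, shows that \eqref{eq:p-kernel-condition} then forces $2[x_2,x_1,x_1]=0$. This fails for the free metabelian Lie ring on $x_1,x_2,x_4$ modulo $\gamma_5$ and the ideal generated by $4x_1$ and $2x_2-[x_4,x_1]$. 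Indeed, the additive map to $\Z/4\Z$ sending $[x_2,x_1,x_1]\mapsto1$, $[x_4,x_1,x_1,x_1]\mapsto2$ and all other Hall basis elements to $0$ vanishes on the relations and detects $2[x_2,x_1,x_1]\neq0$.

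With the correct ordering, each correction term has the form $zu'$ with $z\in Z(T)\cap T'$ and $u'\in M_{q-1}$, so it lies in $M_q$. Also $C_j\cdot M_q\subseteq M_{q+1}$. Hence $p(k_jM_q)=0$ for $q\ge3$ and $p(k_jM_2)\subseteq Z(T)\cdot T$, as in Lemma~\ref{lem:multiplication-by-generators} and Proposition~\ref{prop:projection-properties}(2).

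Your fallbacks do not replace this. Theorem~\ref{thm:Passi-Sicking} only gives $2D_{2c-1}(L)=0$, and excluding elements of order two is exactly the hard part; compare the order-two elements in Theorem~\ref{thm:A}. Functionals like $\lambda$ certify non-vanishing, not vanishing.
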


Theorem~\ref{thm:main_thm_for_finitely_generated} differs from
Theorem~\ref{thm:C} only by the finite generation assumption. The
general case of Theorem~\ref{thm:C} follows from
Theorem~\ref{thm:main_thm_for_finitely_generated} by the standard
argument expressing an arbitrary Lie ring as a filtered colimit of
finitely generated Lie subrings. We carry out this passage at the end of
the section.

Our strategy for proving
Theorem~\ref{thm:main_thm_for_finitely_generated} consists of the
following two steps.

\textit{Step 1.}
We construct a specific central extension
\[
0\longrightarrow K\longrightarrow T\xrightarrow{\pi}L
\longrightarrow 0
\]
with the following properties:
\begin{enumerate}
\item for every $i\geq 2$, the projection $\pi$ induces an isomorphism
\(\gamma_i(T) \cong \gamma_i(L)\);
\item the derived subring $T'$, canonically isomorphic to $L'$, is an
additive direct summand of $T$.
\end{enumerate}

\textit{Step 2.}
We choose a suitable decomposition of the additive group of $T$ into a
direct sum of cyclic groups. Using this decomposition, we construct an
additive retraction
\(
p\colon \U(T)\twoheadrightarrow P_{\leq 2}\U(T)
\)
satisfying the hypotheses of
Corollary~\ref{cor:nilpotent-certificate}. It then follows immediately
that $D_{2c-1}(L)=0$.

The role of the index $2c-1$ is already visible in the proof of
Corollary~\ref{cor:nilpotent-certificate}. It is the least integer $n\geq 2$
such that, for every decomposition $n=i+j$ with $i,j\geq 1$, either
$i\geq c$ or $j\geq c$. We apply this observation to the quadratic part
of the decomposition in $\U(T)$ of an element of $\varpi(T)^n$. This part is
expressed as a sum of products belonging to
$\gamma_i(T)\cdot\gamma_j(T)$ with $i+j\geq n$. Since
$\max\{i,j\}\geq c$ and $\gamma_c(T)\subseteq Z(T)$, every quadratic
term belongs to $Z(T)\cdot T$ and can therefore be controlled by the
vanishing criterion.

\subsection{Step 1: construction of \texorpdfstring{$T$}{T}}
\label{subsec:construction-of-T}

Let $L$ be a metabelian Lie ring whose additive group is finitely
generated. Its abelianization $L_{\ab}=L/L'$ is a finitely
generated abelian group, which we regard as a Lie ring with trivial Lie
bracket. Choose an epimorphism
\(
\rho\colon P\twoheadrightarrow L_{\ab}
\)
from a finitely generated free abelian group $P$, also regarded as a Lie
ring with trivial Lie bracket. Set
\begin{equation}
\label{eq:T_definition}
T=L\times_{L_{\ab}}P
=\left\{(x,v)\in L\times P\mid x+L'=\rho(v)\right\}.
\end{equation}
The Lie bracket on $T$ is given by
\begin{equation}
\label{eq:T_lie_bracket}
[(x,v),(y,w)]=([x,y],0).
\end{equation}
Let $\pi\colon T\twoheadrightarrow L$ be the canonical projection, and
set $K=\ker(\pi)$.

\begin{Lemma}
\label{lem:properties-of-T}
The extension
\[
0\longrightarrow K\longrightarrow T\xrightarrow{\pi}L
\longrightarrow 0
\]
is central and has the following properties:
\begin{enumerate}
\item for every $i\geq 2$, the projection $\pi$ induces an isomorphism
\(
\gamma_i(T)\cong\gamma_i(L)\);
\item the canonical projection $T\twoheadrightarrow P$ fits into a
split short exact sequence of abelian groups
\[
0\longrightarrow T'\longrightarrow T\longrightarrow P
\longrightarrow 0.
\]
Consequently,
\[T\cong T'\oplus P \; (\cong L'\oplus P)\]
noncanonically as abelian groups.
\end{enumerate}
\end{Lemma}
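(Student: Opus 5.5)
We verify everything directly from the definitions \eqref{eq:T_definition} and \eqref{eq:T_lie_bracket}. First, $T$ is a Lie subring of $L\times P$: the bracket \eqref{eq:T_lie_bracket} is the restriction of the product bracket, since $P$ is abelian. The pair $([x,y],0)$ does lie in $T$, because $[x,y]\in L'$ and $\rho(0)=0$. Surjectivity of $\pi$ follows from surjectivity of $\rho$: given $x\in L$, choose $v\in P$ with $\rho(v)=x+L'$. The kernel is $K=\{(0,v)\mid \rho(v)=0\}$. For $(0,v)\in K$ and any $(y,w)\in T$ we get $[(0,v),(y,w)]=([0,y],0)=0$, so the extension is central.

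For property (1), we show by induction that $\gamma_i(T)=\gamma_i(L)\times\{0\}$ for $i\geq2$. The case $i=2$: the brackets $([x,y],0)$ with $x,y$ ranging over all of $L$ (every $x$ lifts to $T$) generate $L'\times\{0\}$. For the inductive step, $[(z,0),(y,w)]=([z,y],0)$, and again $y$ ranges over $L$, so $\gamma_{i+1}(T)=\gamma_{i+1}(L)\times\{0\}$. The map $\pi$ restricted to $\gamma_i(L)\times\{0\}$ is $(z,0)\mapsto z$, which is clearly injective. Hence it is an isomorphism onto $\gamma_i(L)$. In particular $T'=L'\times\{0\}\cong L'$.

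For property (2), consider the projection $q\colon T\to P$, $(x,v)\mapsto v$. It is surjective, since for any $v$ we can pick $x$ with $x+L'=\rho(v)$. Its kernel is $\{(x,0)\mid x\in L'\}=T'$ by the computation above. Since $P$ is free abelian, the sequence $0\to T'\to T\to P\to0$ splits additively, giving $T\cong T'\oplus P\cong L'\oplus P$.

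There is no real obstacle here. The only point needing care is the identification $\ker q=T'$, which relies on (1) at $i=2$, together with the observation that every element of $L$ lifts to $T$.
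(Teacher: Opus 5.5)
Your proof is correct and is essentially the paper's argument. The paper phrases centrality and the identification $\ker(T\twoheadrightarrow P)=T'$ through the pullback square (a pullback of a central extension is central, and kernels are preserved under pullback), which your elementwise computations verify directly; both proofs derive (1) from the bracket formula by induction and the splitting from the freeness of $P$. You also make explicit the surjectivity of $\pi$, which the inductive step needs and which the paper leaves implicit.
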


\begin{proof}
The extension $T\twoheadrightarrow L$ is obtained by pulling back the
central extension
\[
0\longrightarrow\ker(\rho)\longrightarrow P
\xrightarrow{\rho}L_{\ab}\longrightarrow 0
\]
along the canonical projection $L\twoheadrightarrow L_{\ab}$.
Hence it is central.

Formula \eqref{eq:T_lie_bracket} gives
\(
T'=L'\times\{0\}
\)
and, inductively,
\(
\gamma_i(T)=\gamma_i(L)\times\{0\}\) for each \(i\geq 2\).
This proves the first assertion.

For the second assertion, consider the following diagram, whose rows
and columns are exact:
\[
\begin{tikzcd}[cramped]
	&& {T'} & {L'} \\
	0 & K & T & L & 0 \\
	0 & {\ker(\rho)} & P & {L_{\ab}} & 0
	\arrow["\cong"{description}, draw=none, from=1-3, to=1-4]
	\arrow[hook, from=1-3, to=2-3]
	\arrow[hook, from=1-4, to=2-4]
	\arrow[from=2-1, to=2-2]
	\arrow[from=2-2, to=2-3]
	\arrow["\cong"{marking, allow upside down}, draw=none, from=2-2, to=3-2]
	\arrow[from=2-3, to=2-4]
	\arrow[two heads, from=2-3, to=3-3]
	\arrow["\lrcorner"{anchor=center, pos=0.125}, draw=none, from=2-3, to=3-4]
	\arrow[from=2-4, to=2-5]
	\arrow[two heads, from=2-4, to=3-4]
	\arrow[from=3-1, to=3-2]
	\arrow[from=3-2, to=3-3]
	\arrow[from=3-3, to=3-4]
	\arrow[from=3-4, to=3-5]
\end{tikzcd}
\]
Both isomorphisms `$\cong$' marked in the diagram follow from the fact that
kernels are preserved under pullback. In particular, the canonical
projection $T\twoheadrightarrow P$ has kernel $T'$. The resulting short
exact sequence of abelian groups
\[
0\longrightarrow T'\longrightarrow T\longrightarrow P
\longrightarrow 0
\]
splits because $P$ is free abelian.
\end{proof}

Since we will work explicitly with a PBW basis, it is convenient to fix
specific decompositions into direct sums of cyclic groups for the
abelian groups involved. Choose an invariant factor decomposition
\begin{equation}
\label{eq:L_ab_decomposition}
L_{\ab}\cong
\bigoplus_{i=1}^m\Z/d_i\Z,
\end{equation}
where
\[
d_i\geq 0,\qquad 1\leq i\leq m,
\qquad
d_1\mid\cdots\mid d_m.
\]
As usual, $\Z/0\Z=\Z$.

We now specify the choice of the free abelian group $P$. Set
\begin{equation}
\label{eq:P_decomposition}
P=\bigoplus_{i=1}^m\Z e_i,
\end{equation}
and define
\(
\rho\colon P\twoheadrightarrow L_{\ab}
\)
by sending $e_i$ to the standard generator of the $i$th cyclic summand
in \eqref{eq:L_ab_decomposition}. Then
\[
\ker(\rho)
=\langle d_i e_i\mid 1\leq i\leq m\rangle
\subseteq P.
\]

\begin{Corollary}
\label{cor:adapted-decomposition-of-T}
For the central extension
\[
0\longrightarrow K\longrightarrow T\xrightarrow{\pi}L
\longrightarrow 0
\]
constructed above, the Lie ring $T$ is metabelian, and there exist an
integer $k\geq 0$ and elements
\[
t_1,\ldots,t_k\in T',
\qquad
p_1,\ldots,p_m\in T,
\]
such that, additively,
\[
T'=\bigoplus_{i=1}^k\langle t_i\rangle,
\qquad
T=T'\oplus\bigoplus_{j=1}^m\Z p_j.
\]
Moreover, there exist elements $b_1,\ldots,b_m\in T'$ such that the
subgroup $K\subseteq T$ is generated by
\[
d_i p_i+b_i,\qquad 1\leq i\leq m.
\]
\end{Corollary}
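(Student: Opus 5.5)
The plan is to read off everything from Lemma~\ref{lem:properties-of-T} and the explicit shape of $\rho$. First, $T$ is metabelian. By \eqref{eq:T_lie_bracket} we have $T'=L'\times\{0\}$, and hence
\[
T''=[T',T']=L''\times\{0\}=0 .
\]
Next, $T'\cong L'$ is a subgroup of the finitely generated abelian group $L$, so it is finitely generated. We therefore choose any decomposition of $T'$ into a direct sum of cyclic subgroups $\langle t_1\rangle\oplus\cdots\oplus\langle t_k\rangle$; this fixes $k$ and the $t_i$.

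Second, we build the elements $p_j$ from a splitting. By Lemma~\ref{lem:properties-of-T}(2), the projection $q\colon T\to P$, $(x,v)\mapsto v$, is surjective with kernel $T'$. Since $P=\bigoplus\Z e_j$ is free, we may choose elements $p_j\in T$ with $q(p_j)=e_j$. Concretely, $p_j=(x_j,e_j)$, where $x_j\in L$ is any lift of the $j$th standard generator of $L_{\ab}$. The subgroup $\langle p_1,\ldots,p_m\rangle$ maps isomorphically onto $P$, so it is free on the $p_j$ and complements $\ker q=T'$. This gives $T=T'\oplus\bigoplus_j\Z p_j$.

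Third, we identify $K$. An element of $K=\ker\pi$ has the form $(0,v)$ with $\rho(v)=0$, that is, $v\in\ker\rho=\langle d_ie_i\rangle$. Hence $q$ restricts to an injection $K\hookrightarrow\ker\rho$, and this injection is onto: for $v\in\ker\rho$, the pair $(0,v)$ lies in $T$.

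It remains to express the preimage of $d_ie_i$ in terms of the decomposition. Consider $d_ip_i=(d_ix_i,d_ie_i)$. The element $d_ix_i$ lies in $L'$, because $d_i$ kills the $i$th generator of $L_{\ab}$. Therefore $(d_ix_i,0)\in L'\times\{0\}=T'$. Put
\[
b_i=-(d_ix_i,0)\in T'.
\]
Then $d_ip_i+b_i=(0,d_ie_i)$, which is exactly the element of $K$ lying over $d_ie_i$. Since $q|_K$ is an isomorphism onto $\ker\rho=\langle d_ie_i\rangle$, these elements generate $K$.

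The case $d_i=0$ is harmless: the generator is then $0$, which is allowed. There is no real obstacle here. The only point needing care is checking that $(0,v)\in T$ and that $d_ix_i\in L'$, both of which follow directly from the definition \eqref{eq:T_definition}.
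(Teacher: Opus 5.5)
Your proposal is correct and follows essentially the same route as the paper: you lift the generators to $p_i=(a_i,e_i)$ to split $T\to P$, and you write $(0,d_ie_i)=d_ip_i+(-d_ia_i,0)$ with $d_ia_i\in L'$. The only difference is cosmetic. You get finite generation of $T'$ from $T'\cong L'\subseteq L$, while the paper gets it from $T\subseteq L\times P$.
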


\begin{proof}
This follows directly from Lemma~\ref{lem:properties-of-T}. Since
$T'\cong L'$ and $L$ is metabelian, the Lie ring $T$ is metabelian.
Moreover, $T$ is an additive subgroup of $L\times P$. Since the additive
groups of $L$ and $P$ are finitely generated, the additive group of $T$
is finitely generated. Hence the additive group of $T'$ is finitely
generated, and we may choose a decomposition
\(
T'=\bigoplus_{i=1}^k\langle t_i\rangle
\)
into cyclic groups.

We now describe explicit choices of the elements $p_i\in T$ and
$b_i\in T'$. We use the decomposition \eqref{eq:P_decomposition} and
the realization $T\subseteq L\times P$ from \eqref{eq:T_definition}.
For each $i$, choose a lift $a_i\in L$ of
$\rho(e_i)\in L_{\ab}$, and set
\(p_i=(a_i,e_i)\in T\).
Since $P$ is free on $e_1,\ldots,e_m$, the assignment $e_i\mapsto p_i$
extends to an additive section of the projection $T\twoheadrightarrow P$. It follows
that
\[
T=T'\oplus\bigoplus_{i=1}^m\Z p_i.
\]
Under the identification $K\cong\ker(\rho)$, the subgroup $K$ is
generated by the elements $(0,d_i e_i)$. Since
$d_i\rho(e_i)=0$ in $L_{\ab}$, we have $d_i a_i\in L'$, and
therefore
\[
(0,d_i e_i)
=d_i p_i+\underbrace{(-d_i a_i,0)}_{=:b_i\in T'},
\qquad 1\leq i\leq m.
\]
This proves the assertion.
\end{proof}

\subsection{Step 2: the additive retraction
\texorpdfstring{$p\colon \U(T)\twoheadrightarrow P_{\leq 2}\U(T)$}
{the additive retraction onto filtration degree at most two}}
\label{subsec:additive-retraction}

By Corollary~\ref{cor:adapted-decomposition-of-T}, there is a subset
\[
\mathcal X=
\left\{
\underbrace{t_1,\ldots,t_k}_{\text{generators of}\;T'},
\underbrace{p_1,\ldots,p_m}_{\text{generators of}\;P}
\right\} \subseteq T,
\]
and there are elements $b_1,\ldots,b_m\in T'$ such that
\[
T=\bigoplus_{x\in\mathcal X}\langle x\rangle,
\qquad
K=\langle d_i p_i+b_i\mid 1\leq i\leq m\rangle.
\]
Equip $\mathcal X$ with the total order
\[
t_1<\cdots<t_k<p_1<\cdots<p_m.
\]
As before, set $M_0=\Z\cdot 1$ and, for $q\geq 1$, let
\[
M_q=
\langle x_1\cdots x_q
\mid x_i\in\mathcal X,\ x_1\leq\cdots\leq x_q\rangle
\subseteq \U(T).
\]
There is an additive projection
\[
p\colon
\U(T)=\bigoplus_{r\geq 0}M_r
\twoheadrightarrow
P_{\leq 2}\U(T)=M_0\oplus M_1\oplus M_2
\]
defined by
\[
\left.p\right|_{M_r}=
\begin{cases}
\operatorname{id}_{M_r},& r\leq 2,\\
0,& r>2.
\end{cases}
\]
We will prove that this projection satisfies the hypotheses of
Corollary~\ref{cor:nilpotent-certificate}. We first establish several
preliminary results.

\begin{Lemma}
\label{lem:central-multiple-commutator}
Let $1\leq i\leq m$, and let $x\in\mathcal X$ satisfy $x<p_i$. Then
\(d_i[p_i,x]\in Z(T)\).
\end{Lemma}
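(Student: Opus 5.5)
The plan is to use only two facts: each generator $d_ip_i+b_i$ of $K$ is central in $T$, and $T$ is metabelian (Corollary~\ref{cor:adapted-decomposition-of-T}). The first fact lets us trade $d_ip_i$ for $-b_i\in T'$ inside any bracket:
\[
 d_i[p_i,y]=[d_ip_i,y]=-[b_i,y],\qquad y\in T .
\]
The same holds with $i$ replaced by any index $j$. Since $x<p_i$, there are two cases in the chosen order: $x=t_j\in T'$, or $x=p_j$ with $j<i$.

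\emph{Case $x=t_j$.} Here $d_i[p_i,t_j]=-[b_i,t_j]\in[T',T']=0$. So the element is zero, and in particular central.

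\emph{Case $x=p_j$ with $j<i$.} Now $d_j\mid d_i$ by the invariant factor decomposition~\eqref{eq:L_ab_decomposition}. (If $d_j=0$, then $d_i=0$ as well and the claim is trivial.) We must show that $d_i[[p_i,p_j],z]=0$ for every $z\in T$. For $z\in T'$ this holds because $[p_i,p_j]\in T'$ and $T'$ is abelian. Since $T=T'\oplus\bigoplus\Z p_l$, it remains to treat $z=p_l$. Rewrite the double bracket by the Jacobi identity:
\[
 [[p_i,p_j],p_l]=-[[p_j,p_l],p_i]-[p_j,[p_i,p_l]] .
\]
Then multiply by $d_i$ and estimate the two terms separately.

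First term: $d_i[[p_j,p_l],p_i]=-[[p_j,p_l],d_ip_i]=[[p_j,p_l],b_i]$, which lies in $[T',T']=0$.

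Second term: this is the step where the divisibility $d_j\mid d_i$ enters, and it is the only point needing care. I would write
\[
 d_i[p_j,[p_i,p_l]]=\tfrac{d_i}{d_j}[d_jp_j,[p_i,p_l]]=-\tfrac{d_i}{d_j}[b_j,[p_i,p_l]] ,
\]
which again vanishes because both arguments lie in $T'$.

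Hence $d_i[[p_i,p_j],p_l]=0$ for all $l$, so $d_i[p_i,p_j]\in Z(T)$. The main obstacle is finding the right Jacobi rearrangement. A naive substitution of $d_ip_i=-b_i$ directly gives $-[[b_i,p_j],p_l]$, which need not vanish. One must instead move $p_i$ to the outside so that $d_i$ can be absorbed either by $p_i$ itself or, using $d_j\mid d_i$, by $p_j$.
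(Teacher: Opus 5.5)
Your proof is correct and follows the paper's argument. It uses the same two cases and the same Jacobi rearrangement, which pairs $d_ip_i$ and, via $d_j\mid d_i$, $d_jp_j$ against elements of $T'$. The only difference is that the paper works with an arbitrary $y\in T$ rather than reducing to $z\in T'$ and $z=p_l$. There is a harmless sign slip in your first term: in fact $d_i[[p_j,p_l],p_i]=+[[p_j,p_l],d_ip_i]=-[[p_j,p_l],b_i]$, which is irrelevant because the term vanishes.
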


\begin{proof}
Since $d_i p_i+b_i\in K$ and $b_i\in T'$, we have
\[
d_i p_i\in K+T'\subseteq Z(T)+T'.
\]
Moreover,
\(
[Z(T)+T',T']=0
\)
because $T$ is metabelian.

\emph{Case 1.} Suppose that $x=t_j$ for some $1\leq j\leq k$. Then
\[
d_i[p_i,t_j]=[d_i p_i,t_j]\in[Z(T)+T',T']=0.
\]

\emph{Case 2.} Suppose that $x=p_j$ for some $1\leq j<i$. For every
$y\in T$, the Jacobi identity gives
\begin{align*}
d_i[[p_i,p_j],y] =
- \Big[
\underbrace{[p_j,y]}_{\in T'},
\underbrace{d_i p_i}_{\in Z(T)+T'}
\Big]
- \Big[
\underbrace{[y, p_i]}_{\in T'},
\underbrace{d_i p_j}_{\substack{\in Z(T)+T'\\ \text{since }d_j\mid d_i}}
\Big] = 0.
\end{align*}
Thus $d_i[p_i,p_j]\in Z(T)$.
\end{proof}

The following lemmas~\ref{lem:multiplication-by-generators}
and~\ref{lem:ordered-products-with-commutators} establish properties of a
rewriting system that expresses every element of $\U(T)$ in terms
of its components in the decomposition
\(
\U(T)=\bigoplus_{r=0}^{\infty}M_r
\).

\begin{Lemma}
\label{lem:multiplication-by-generators}
\leavevmode
\begin{enumerate}
\item Let $q\geq0$ be an integer. Then
\[
T'\cdot M_q\subseteq M_{q+1}.
\]
\item Let $i,q$ be integers with $1\leq i\leq m$ and $q\geq1$. Then
\[
d_i p_i\cdot M_q
\subseteq M_{q+1}+(Z(T)\cap T')\cdot M_{q-1}
\subseteq M_{q+1}+M_q.
\]
\end{enumerate}
\end{Lemma}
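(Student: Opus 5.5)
Both parts are proved by pushing a single generator leftward through an ordered PBW monomial and collecting what is left over. Fix an ordered monomial $y_1\cdots y_q$ with $y_1\le\cdots\le y_q$ in $\mathcal X$. The key facts are these. $T'$ is abelian, because $T$ is metabelian. Every element of $T'$ is a combination of the $t_j$, and these come first in the order. The bracket of an element of $T'$ with any $p_j$ lands back in $T'$.

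For part (1), take $t\in T'$. Since all the $t_j$ precede all the $p_j$, write the monomial as $t_{j_1}\cdots t_{j_s}\,p_{l_1}\cdots p_{l_{q-s}}$. We have $t\cdot t_{j_1}\cdots t_{j_s}=t_{j_1}\cdots t_{j_s}\cdot t$ in $\U(T)$ because $T'$ is abelian, so the $T'$-part of $t\,y_1\cdots y_q$ commutes freely. Expanding $t=\sum_j \alpha_j t_j$ and using commutativity of the $t$'s, the product $t\,t_{j_1}\cdots t_{j_s}$ can be reordered into ordered monomials in the $t$'s of length $s+1$, with no error terms. Multiplying on the right by the tail $p_{l_1}\cdots p_{l_{q-s}}$, which is already ordered and sits after all the $t$'s, gives an ordered monomial of length $q+1$. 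Hence $t\,y_1\cdots y_q\in M_{q+1}$. This needs no induction, since no commutator of a $t$ with a $p$ ever has to be formed.

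For part (2), I move $d_ip_i$ rightward to its ordered position. Write the monomial as $u\,v\,w$, where $u$ is the prefix of factors $<p_i$, $v$ is the block of copies of $p_i$, and $w$ is the block of factors $>p_i$. Then
\[
d_ip_i\,u=u\,d_ip_i+\sum_k y_1\cdots y_{k-1}\,\bigl(d_i[p_i,y_k]\bigr)\,y_{k+1}\cdots y_{r},
\]
where $r$ is the length of $u$ and $k$ runs over the positions in $u$. The term $u\,d_ip_i\,v\,w$ is an ordered monomial of length $q+1$ (up to the scalar $d_i$), so it lies in $M_{q+1}$. In each correction term, Lemma~\ref{lem:central-multiple-commutator} gives $z_k=d_i[p_i,y_k]\in Z(T)$, and $z_k\in T'$ as well. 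Because $z_k$ is central, it can be moved to the front of the word, giving $z_k\,y_1\cdots\widehat{y_k}\cdots y_q$. The remaining word $y_1\cdots\widehat{y_k}\cdots y_q$ is still ordered and lies in $M_{q-1}$. So every correction lies in $(Z(T)\cap T')\cdot M_{q-1}$, and part (1) then places it in $M_q$.

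The main obstacle is bookkeeping, not substance. First, one must justify that the leftover factor $y_1\cdots\widehat{y_k}\cdots y_q$ is genuinely an ordered monomial; deleting one factor preserves order, so this holds. Second, one must check that the copies of $p_i$ commute with $d_ip_i$ and so produce no commutator terms. Third, the edge case $q=1$ must be handled, where $M_0=\Z\cdot 1$ and the correction is just $d_i[p_i,y_1]\in Z(T)\cap T'\subseteq M_1$.
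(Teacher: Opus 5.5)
Your proof is correct and is essentially the paper's argument. Part (1) reorders using the commutativity of the abelian $T'$, with no correction terms. Part (2) moves $d_ip_i$ rightward past the factors smaller than $p_i$, pulls each central correction $d_i[p_i,y_k]\in Z(T)\cap T'$ to the front via Lemma~\ref{lem:central-multiple-commutator}, and then applies part (1). The only slip is the word ``leftward'' in your opening sentence: the generator travels rightward, exactly as you then carry out.
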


\begin{proof}
Let
\[
u=x_1\cdots x_q\in M_q,
\qquad
x_\nu\in\mathcal X,
\qquad
x_1\leq\cdots\leq x_q,
\]
be a generator of $M_q$. There exists $s$ with $0\leq s\leq q$ such
that
\[
\{x_1,\ldots,x_s\}\subseteq\{t_1,\ldots,t_k\},
\qquad
\{x_{s+1},\ldots,x_q\}\subseteq\{p_1,\ldots,p_m\}.
\]
Thus
\[
u=t_{j_1}\cdots t_{j_s}p_{i_1}\cdots p_{i_{q-s}}
\]
for some
\[
1\leq j_1\leq\cdots\leq j_s\leq k,
\qquad
1\leq i_1\leq\cdots\leq i_{q-s}\leq m.
\]

To prove (1), it suffices by additivity to consider $t_i u$ with
$1\leq i\leq k$. Since $T$ is metabelian, the elements $t_i$ and $t_j$
commute in $\U(T)$. Hence $t_i u$ can
be placed in nondecreasing order without producing correction terms.
Since every $t_j$ precedes every $p_i$ in $\mathcal X$, the resulting
ordered monomial belongs to $M_{q+1}$.

To prove (2), assume $q\geq 1$ and fix $1\leq i\leq m$. Starting with
\[
d_i p_i u
=d_i p_i t_{j_1}\cdots t_{j_s}p_{i_1}\cdots p_{i_{q-s}},
\]
we move $d_i p_i$ to its position in the nondecreasing order. At each step
we use the rewriting rule
\[
(\dots d_ip_i \cdot x \dots)
\rightsquigarrow (\dots x \cdot d_i p_i \dots) + \underbrace{(\dots d_i [p_i,x]\dots)}_{\text{correction term}},
\qquad x \in \mathcal{X}, x<p_i.
\]
The ordered term obtained after all rewrites belongs to $M_{q+1}$.

It remains to consider correction terms. By
Lemma~\ref{lem:central-multiple-commutator},
$d_i[p_i,x]\in Z(T)$. Since $d_i[p_i,x]\in T'$, we have
\[
\begin{aligned}
\text{(correction term)}
&=u_1\cdot d_i[p_i,x]\cdot u_2
=d_i[p_i,x]\cdot u_1u_2\\
&\in (Z(T)\cap T')\cdot M_{q-1}\subseteq M_q,
\end{aligned}
\]
where the equality uses the centrality of $d_i[p_i,x]$ and the
last inclusion follows from the first assertion.
Here $u_1u_2$ is obtained from the ordered monomial $u$ by deleting the
occurrence of $x$ involved in the rewriting step; hence $u_1u_2\in M_{q-1}$.
Therefore
\[
d_i p_i u\in M_{q+1}+(Z(T)\cap T')\cdot M_{q-1}
\subseteq M_{q+1}+M_q,
\]
which proves (2).
\end{proof}

\begin{Lemma}
\label{lem:ordered-products-with-commutators}
Let $q\geq1$ be an integer and let $x_1,\ldots,x_q\in\mathcal X$. The product
$x_1\cdots x_q\in \U(T)$ is a $\Z$-linear combination of products
of the form
\begin{equation}
\label{eq:ordered-product-with-commutators}
c_1\cdots c_h y_1\cdots y_v\in M_{h+v},
\qquad
y_j\in\mathcal X,
\qquad
y_1\leq\cdots\leq y_v,
\end{equation}
where $h,v\geq 0$, each $c_\nu$ is a left-normed commutator of length
$s_\nu\geq 2$ in elements of $\mathcal X$, and
\[
s_1+\cdots+s_h+v=q.
\]
\end{Lemma}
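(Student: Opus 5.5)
We argue by induction on the number of inversions of the word $x_1\cdots x_q$. Define the inversion count as the number of pairs $\alpha<\beta$ with $x_\alpha>x_\beta$ in the total order on $\mathcal X$. To keep the induction closed, we prove a slightly stronger statement. Consider any product
\[
c_1\cdots c_h\,z_1\cdots z_w ,
\]
where each $c_\nu$ is a left-normed commutator of length $s_\nu\ge2$ in elements of $\mathcal X$, and $z_j\in\mathcal X$. We claim such a product is a $\Z$-linear combination of products of the form \eqref{eq:ordered-product-with-commutators} with the same total weight $\sum s_\nu+w$. The lemma is the case $h=0$. The induction runs on the pair $(w,\ \text{inversions of }z_1\cdots z_w)$, ordered lexicographically.

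If $z_1\le\cdots\le z_w$ there is nothing to do; the product lies in $M_{h+w}$ by Lemma~\ref{lem:multiplication-by-generators}(1). This requires each $c_\nu\in T'$, which holds because each $c_\nu$ is a commutator of length at least two.

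Otherwise choose an adjacent pair with $z_j>z_{j+1}$ and apply
\[
z_jz_{j+1}=z_{j+1}z_j+[z_j,z_{j+1}]
\]
in $\U(T)$. The first term has the same $w$ and one fewer inversion. The second term is
\[
c_1\cdots c_h\,z_1\cdots z_{j-1}\,[z_j,z_{j+1}]\,z_{j+2}\cdots z_w .
\]
Here the new commutator $c=[z_j,z_{j+1}]$, of length $2$, still sits among the letters. We move $c$ leftwards past $z_{j-1},\dots,z_1$ using $zc=cz+[z,c]=cz-[c,z]$. Each step produces the term with $c$ moved one position left, plus a term in which $c$ is replaced by the left-normed commutator $[c,z]$ of length one greater. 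In each new term, $w$ has decreased and the weight is preserved, so the induction hypothesis applies. Once $c$ reaches the front, it joins the block $c_1\cdots c_h$. No reordering within that block is needed, since $T$ is metabelian and hence $T'$ is commutative in $\U(T)$. A further simplification would be to note that $[c,z]\in T'$ directly, but we keep commutators in left-normed form with entries from $\mathcal X$, as the statement requires.

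The only delicate point is the bookkeeping. We must check that weight $\sum s_\nu+w$ is preserved at every rewrite and that the process terminates. Termination follows from the lexicographic measure: every rewrite either keeps $w$ and lowers the inversion count, or lowers $w$. I expect no real obstacle beyond stating the strengthened induction hypothesis cleanly.
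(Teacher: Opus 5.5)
Your proof is correct and is essentially the paper's straightening argument in $\U(T)$: rewrite with $xy=yx+[x,y]$, keep commutators left-normed, terminate via a lexicographic measure on (number of letters, number of inversions), and use Lemma~\ref{lem:multiplication-by-generators}(1) to land in $M_{h+v}$. The difference is organizational: the paper moves each $p_i$ rightward past commutators, letters $t_\alpha$ and smaller $p_j$, then uses metabelianity to reorder the resulting $T'$-factors. You instead bubble-sort all letters and push each new commutator to the front, which tacitly needs an inner induction on its position, so metabelianity enters only through Lemma~\ref{lem:multiplication-by-generators}(1).
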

\begin{proof}
We first show that the product $x_1\cdots x_q$ can be rewritten as a
$\Z$-linear combination of products of the form
\[
f_1\cdots f_r p_{i_1}\cdots p_{i_\ell},
\qquad
i_1\leq\cdots\leq i_\ell,
\]
where each $f_j$ is either a left-normed commutator of length at least
$2$ in elements of $\mathcal X$ or one of the elements
$t_1,\ldots,t_k$. To this end, we repeatedly apply the rewriting rule
\[
(\dots p_i\cdot a\dots)
\rightsquigarrow
(\dots a\cdot p_i\dots) - (\dots[a,p_i]\dots),
\]
where $a$ is either a left-normed commutator of weight at least $2$, an
element $t_\alpha$, or an element $p_j$ with $j<i$. The new commutator
$[a,p_i]$ is again left-normed and has weight one greater than that of
$a$, where a letter is regarded as a commutator of weight $1$.

An inversion is a pair of positions, not necessarily adjacent, whose
left factor is $p_i$ and whose right factor is a commutator of weight at
least $2$, a letter $t_\alpha$, or a letter $p_j$ with $j<i$.
In the first term on the right-hand side, swapping the adjacent inverted
pair decreases the number of inversions by one without changing the
number of factors; the second term has fewer factors.
Thus the lexicographically ordered pair consisting of the number of
factors and the number of inversions decreases at every branch, so the
rewriting process terminates.

Every factor $f_j$ belongs to $T'$. Since $T$ is metabelian, the
elements $f_1,\ldots,f_r$ commute with one another in
$\U(T)$. We may therefore place the commutators of length at
least $2$ first, in an arbitrary order, and then arrange the remaining
letters $t_\alpha$ in nondecreasing order. Since every $t_\alpha$
precedes every $p_i$ in $\mathcal X$, each resulting term has the form
\[
c_1\cdots c_h y_1\cdots y_v,
\qquad
y_1\leq\cdots\leq y_v,
\]
appearing in \eqref{eq:ordered-product-with-commutators}.

The sum of the weights
is preserved by each rewriting rule. Since the original product has total weight $q$, every
resulting term satisfies
\[
s_1+\cdots+s_h+v=q.
\]
Finally, $y_1\cdots y_v\in M_v$ and each $c_\nu\in T'$. Repeated
application of Lemma~\ref{lem:multiplication-by-generators}~(1) gives
\[
c_1\cdots c_h y_1\cdots y_v\in M_{h+v}.
\]
This proves the assertion.
\end{proof}

\begin{Proposition}
\label{prop:projection-properties}
The projection $p$ has the following properties:
\begin{enumerate}
\item for every $r\geq 2$,
\[
p\left(\varpi(T)^r\right)\subseteq W_r(T);
\]
\item one has
\[
p\left(K\cdot \U(T)\right)\subseteq K+Z(T)\cdot T.
\]
\end{enumerate}
\end{Proposition}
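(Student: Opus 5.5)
We verify the two assertions separately. Each time we expand an element along the PBW decomposition $\U(T)=\bigoplus_r M_r$ and use that $p$ is the identity on $M_0\oplus M_1\oplus M_2$ and zero on $M_r$ for $r\geq3$.

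\emph{Assertion (1).} Since $T=\bigoplus_{x\in\mathcal X}\langle x\rangle$, the ideal $\varpi(T)^r$ is generated additively by products $x_1\cdots x_q$ with $q\geq r$ and $x_\nu\in\mathcal X$; this follows by multilinear expansion of products of elements of $T$. By Lemma~\ref{lem:ordered-products-with-commutators}, each such product is a $\Z$-linear combination of terms $c_1\cdots c_hy_1\cdots y_v\in M_{h+v}$ with $s_1+\cdots+s_h+v=q$ and each $s_\nu\geq2$. Thus $p$ kills every term with $h+v\geq3$ and fixes the others, so it suffices to place the surviving terms in $W_r(T)$. We run through the cases $h+v\leq2$.
\begin{enumerate}
\item The case $h=v=0$ cannot occur, because $q\geq r\geq2$.
\item If $h=0$ and $v=1$, then $q=1$, which is impossible.
\item If $h=1$ and $v=0$, the term is a commutator $c_1$ of length $q$, so it lies in $\gamma_q(T)\subseteq\gamma_r(T)$.
\item If $h=2$ and $v=0$, the term lies in $\gamma_{s_1}(T)\cdot\gamma_{s_2}(T)$ with $s_1+s_2=q$.
\item If $h=v=1$, the term lies in $\gamma_{q-1}(T)\cdot\gamma_1(T)$.
\item If $h=0$ and $v=2$, then $q=2=r$ and the term lies in $\gamma_1(T)\cdot\gamma_1(T)=W_2(T)$.
\end{enumerate}
In the cases involving a product, the term lies in $W_q(T)$, and the inclusion \eqref{eq:W_q_inclusion} gives $W_q(T)\subseteq W_r(T)$.

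\emph{Assertion (2).} The group $K\cdot\U(T)$ is spanned by products $k u$, where $k=d_ip_i+b_i$ and $u\in M_q$ is an ordered PBW monomial. We split according to $q$.
\begin{enumerate}
\item If $q=0$, then $ku\in K$.
\item If $q=1$, then $u\in T$. Since $K\subseteq Z(T)$, we have $ku\in Z(T)\cdot T\subseteq P_{\leq2}\U(T)$, and $p$ fixes it.
\item If $q\geq2$, write $ku=b_iu+d_ip_iu$. Lemma~\ref{lem:multiplication-by-generators}(1) gives $b_iu\in M_{q+1}$, which $p$ kills. Lemma~\ref{lem:multiplication-by-generators}(2) gives $d_ip_iu\in M_{q+1}+(Z(T)\cap T')\cdot M_{q-1}$, and the second summand lies in $M_q$ by part~(1) of that lemma.
\begin{itemize}
\item For $q\geq3$ everything is killed by $p$, so $p(ku)=0$.
\item For $q=2$ only the $M_2$-component survives, and it lies in $(Z(T)\cap T')\cdot M_1\subseteq Z(T)\cdot T$.
\end{itemize}
\end{enumerate}
Thus $p(ku)\in K+Z(T)\cdot T$ in every case.

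The main point needing care is in assertion~(2) with $q=2$. One must make sure that the correction term $(Z(T)\cap T')\cdot M_{q-1}$ really lies in the graded piece $M_q$, rather than only in the filtration level $P_{\leq q}\U(T)$. Only then does $p$ act on it as the identity (for $q=2$) or as zero (for $q\geq3$). Lemma~\ref{lem:multiplication-by-generators}(1) provides exactly this, because the $T'$-factor commutes past all ordered letters without producing correction terms.
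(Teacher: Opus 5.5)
Your proof is correct and follows the paper's proof essentially step for step: for (1) you make the same reduction via Lemma~\ref{lem:ordered-products-with-commutators} and the same case table, and for (2) you make the same case split on $q$ using both parts of Lemma~\ref{lem:multiplication-by-generators}. Your closing remark overstates one point slightly: for $q=2$ it already suffices that the correction term lies in $Z(T)\cdot T\subseteq P_{\leq 2}\U(T)$, where $p$ is the identity, so the graded-piece information from Lemma~\ref{lem:multiplication-by-generators}(1) is genuinely needed only for $q\geq3$.
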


\begin{proof}
(1) The products
\[
x_1\cdots x_q,
\qquad q\geq r,
\qquad x_i\in\mathcal X,
\]
generate $\varpi(T)^r$ additively. It is therefore enough to prove that
\(p(x_1\cdots x_q)\in W_r(T)\).
By Lemma~\ref{lem:ordered-products-with-commutators}, we may express
$x_1\cdots x_q$ as a $\Z$-linear combination of terms of the
form \eqref{eq:ordered-product-with-commutators}. The projection $p$
annihilates every such term lying in $M_{h+v}$ with $h+v>2$.
Consequently, it remains to consider the following six possibilities:
\[
\begin{array}{c|l@{\qquad}r}
\eqref{eq:ordered-product-with-commutators}\in M_{h+v}
  & \multicolumn{2}{c}{\text{summand}} \\
\hline
M_{0+0} & 1
  & \text{(impossible since }q\geq 2\text{)} \\
M_{0+1} & y_1
  & \text{(impossible since }q\geq 2\text{)} \\
M_{0+2} & y_1y_2
  & \text{(possible only when }q=2\text{)} \\
M_{1+0} & c_1\in\gamma_{s_1}(T),
  & s_1=q \\
M_{1+1} & c_1y_1\in\gamma_{s_1}(T)\cdot T,
  & s_1+1=q \\
M_{2+0} & c_1c_2\in
  \gamma_{s_1}(T)\cdot\gamma_{s_2}(T),
  & s_1+s_2=q
\end{array}
\]
The table shows that
\[
p(x_1\cdots x_q)
\in\gamma_q(T)
+\sum_{i=1}^{q-1}\gamma_i(T)\cdot\gamma_{q-i}(T)
=W_q(T) \overset{\eqref{eq:W_q_inclusion}}{\subseteq} W_r(T).
\]
This proves (1).

(2) Fix an additive generator
\(
d_i p_i+b_i
\)
of $K$, where $1\leq i\leq m$, and an ordered monomial
\[
x_1\cdots x_q\in M_q,
\qquad q\geq 0,
\qquad x_j\in\mathcal X,
\qquad x_1\leq\cdots\leq x_q.
\]
We first prove that
\[
p\left((d_i p_i+b_i)\cdot x_1 \ldots x_q\right)
\in K+Z(T)\cdot T.
\]
We consider the following cases.

\begin{itemize}
\item If $q=0$, then
\(d_i p_i + b_i \in K \subseteq T \subseteq P_{\le 2}\U(T)\), and hence
\[
p(d_i p_i+b_i)=d_i p_i+b_i\in K.
\]

\item If $q=1$, then $(d_i p_i+b_i) x_1 \in P_{\leq 2}\U(T)$, and hence
\[
p\left((d_i p_i+b_i)x_1\right)
=(d_i p_i+b_i)x_1\in K \cdot T \subseteq Z(T)\cdot T.
\]

\item Suppose that $q=2$. By Lemma~\ref{lem:multiplication-by-generators}~(2),
\[
d_i p_i\cdot x_1x_2\in M_3+Z(T)\cdot T.
\]
Moreover, Lemma~\ref{lem:multiplication-by-generators}~(1) gives
\[
b_i \cdot x_1 x_2 \in M_3,
\]
because $b_i\in T'$. Thus
\[
p\left((d_i p_i+b_i) \cdot x_1 x_2\right)\in Z(T)\cdot T.
\]

\item Suppose that $q\geq 3$. Since $b_i\in T'$, the two assertions of
Lemma~\ref{lem:multiplication-by-generators} give
\[
d_i p_i \cdot x_1 \ldots x_q \in M_{q+1}+M_q,
\qquad
b_i \cdot x_1 \ldots x_q \in M_{q+1}.
\]
Both $M_q$ and $M_{q+1}$ are annihilated by $p$, so
\[
p\left((d_i p_i+b_i) \cdot x_1 \ldots x_q\right)=0.
\]
\end{itemize}

The elements $d_i p_i+b_i$, $1\leq i\leq m$, generate $K$
additively, while the ordered monomials in the groups $M_q$, $q\geq 0$,
generate $\U(T)$ additively. The required inclusion
\[
p\left(K\cdot \U(T)\right) = p\left(\sum_{i,q} (d_ip_i + b_i) \cdot M_q\right) \subseteq K+Z(T)\cdot T
\]
now follows by additivity.
\end{proof}

\subsection{Proof of Theorem~\ref{thm:C} and its consequences}

\begin{proof}[Proof of
Theorem~\ref{thm:main_thm_for_finitely_generated}]
Let $c\geq 2$ be an integer, and let $L$ be a metabelian Lie ring such that the
additive group of $L$ is finitely generated and
$\gamma_{c+1}(L)=0$. Consider the central extension
\[
0\longrightarrow K\longrightarrow T\xrightarrow{\pi}L
\longrightarrow 0
\]
constructed in \S~\ref{subsec:construction-of-T}, together with
the additive retraction
\(
p\colon \U(T)\twoheadrightarrow P_{\leq 2}\U(T)
\)
constructed in \S~\ref{subsec:additive-retraction}. By
Lemma~\ref{lem:properties-of-T}~(1), we have
$\gamma_{c+1}(T)=0$. By
Corollary~\ref{cor:adapted-decomposition-of-T}, the additive group of
$T$ is finitely generated. Proposition~\ref{prop:projection-properties}
verifies the two conditions required in
Corollary~\ref{cor:nilpotent-certificate}. Therefore that corollary
gives
\(
D_{2c-1}(L)=0\).
\end{proof}

To treat arbitrary $L$, we reduce to finitely generated Lie subrings.

\begin{Lemma}\label{lem:finitely-generated-reduction}
Let $n\geq1$ be an integer, and let $L$ be a Lie ring. If $D_n(L)\neq0$, then there is a finitely generated Lie
subring $L_0\subseteq L$ such that $D_n(L_0)\neq0$.
\end{Lemma}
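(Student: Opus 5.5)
Pick a nonzero $x\in D_n(L)$; we will find a finitely generated subring $L_0\ni x$ with $x\in\varpi(L_0)^n$, so that $D_n(L_0)\neq0$. The argument is the usual compactness one: a single membership $x\in\varpi(L)^n$ is witnessed by finitely many data, and all of it can be transported into a finitely generated subring. The one subtlety is that the witness must live in $\U(L_0)$ itself, not merely map into $\U(L)$.

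\emph{Finite witness.} Since $\varpi(L)^n$ is additively generated by products $y_1\cdots y_q$ with $q\geq n$ and $y_j\in L$, we may write $x=\sum_k \varepsilon_k\, y_{k,1}\cdots y_{k,q_k}$ in $\U(L)$, where each $\varepsilon_k=\pm1$ and each $q_k\geq n$. Lift this to the tensor algebra: the element
\[
\xi=x-\sum_k \varepsilon_k\, y_{k,1}\otimes\cdots\otimes y_{k,q_k}\in T(L)
\]
lies in the two-sided ideal $J_L$ generated by the elements $a\otimes b-b\otimes a-[a,b]$. Hence $\xi$ is a finite sum $\sum_l \alpha_l(a_l\otimes b_l-b_l\otimes a_l-[a_l,b_l])\beta_l$, where the $\alpha_l,\beta_l$ are pure tensors in elements of $L$. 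Moreover, $T(L)$ is a quotient of the free associative ring on the set $L$ by the additivity relations $(u+v)-u-v$, so one more finite batch of additivity relations is needed to make the identity hold literally. Let $S\subseteq L$ be the finite set of all elements of $L$ that appear anywhere in this data: $x$, the $y_{k,j}$, the $a_l$, $b_l$, $[a_l,b_l]$, the letters of the $\alpha_l$ and $\beta_l$, and every element occurring in the additivity relations used.

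\emph{Transport.} Let $L_0$ be the Lie subring generated by $S$; it is finitely generated. The map $T(L_0)\to T(L)$ need not be injective, because tensor products over $\Z$ do not preserve injectivity. For this reason we do not pull back along it. Instead we rebuild the identity directly in $T(L_0)$. Since every element and every sum used in the finite witness lies in $L_0$, the same formal expression, read in $T(L_0)$, shows that $x-\sum_k\varepsilon_k\, y_{k,1}\otimes\cdots\otimes y_{k,q_k}$ lies in $J_{L_0}$. Equivalently, work in the free associative ring on the set $L_0$, where the witness is a literal identity modulo finitely many Lie and additivity relations, all of which hold in $L_0$. Therefore $x=\sum_k\varepsilon_k\, y_{k,1}\cdots y_{k,q_k}$ in $\U(L_0)$, and so $x\in\varpi(L_0)^n$.

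\emph{Nonvanishing.} By Theorem~\ref{thm:pbw}, $\iota\colon L_0\to\U(L_0)$ is injective, so $x$ is a nonzero element of $L_0\cap\varpi(L_0)^n=D_n(L_0)$. The step requiring care is the transport, and routing the witness through the free associative ring on the set $S$ handles it. The lemma then gives Theorem~\ref{thm:C} from Theorem~\ref{thm:main_thm_for_finitely_generated}: a finitely generated Lie subring of a nilpotent Lie ring is additively finitely generated, it is metabelian, and its class is at most $c$.
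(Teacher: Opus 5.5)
Your proof is correct and uses the same compactness argument as the paper, which compresses your transport step into the remark that $\U$ preserves filtered colimits, so the finite identity in $\U(L)$ already holds in $\U(L_0)$ for a large enough finitely generated $L_0$. Your lift to the free associative ring on the finite set $S$ is precisely what justifies that remark, and it rightly avoids assuming that $\U(L_0)\to\U(L)$ is injective; the final appeal to PBW injectivity is harmless but not needed, since $D_n(L_0)$ is defined as a preimage in $L_0$.
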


\begin{proof}
Choose $0\neq z\in D_n(L)$ and express it in $\U(L)$ as an integral
linear combination of products of at least $n$ elements of $L$.
Since $\U$ preserves filtered colimits, this finite identity holds in
$\U(L_0)$ for some finitely generated Lie subring $L_0\subseteq L$
containing $z$ and all the factors.  Hence $0\neq z\in D_n(L_0)$.
\end{proof}

\begin{proof}[Proof of Theorem~\ref{thm:C}]
By Lemma~\ref{lem:finitely-generated-reduction}, it suffices to
consider finitely generated $L$, since Lie subrings inherit metabelianity
and the bound on the nilpotency class.  The additive group of such an $L$
is generated by Lie monomials of length at most $c$ in a finite generating
set, and is therefore finitely generated.
The result follows from Theorem~\ref{thm:main_thm_for_finitely_generated}.
\end{proof}

\begin{Corollary}\label{cor:general-odd}
\leanproof{\leanrepository/LieRings/Publication/Results.lean\#L207-L215}
Let $c \ge 2$ be an integer, and let $L$ be a Lie ring.
Then
\[
              D_{2c-1}(L)\subseteq\gamma_{c+1}(L)+L''.
\]
In particular, $D_5(L)\subseteq\gamma_4(L)$.
\end{Corollary}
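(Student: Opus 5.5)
The plan is to apply Theorem~\ref{thm:C} to the quotient $\bar L=L/(\gamma_{c+1}(L)+L'')$ and then pull the conclusion back to $L$ by functoriality.

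First we check that $I=\gamma_{c+1}(L)+L''$ is an ideal of $L$. Both summands are ideals: $L''$ is an ideal by the Jacobi identity, and $\gamma_{c+1}(L)$ is an ideal by definition. Hence $\bar L=L/I$ is a Lie ring. It is metabelian, because its second derived subring is the image of $L''$, which is zero. It has class at most $c$, because $\gamma_{c+1}(\bar L)$ is the image of $\gamma_{c+1}(L)$, which is zero. Theorem~\ref{thm:C} then gives $D_{2c-1}(\bar L)=0$.

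Next, let $f\colon L\to\bar L$ be the projection. By functoriality of the dimension series, $f(D_{2c-1}(L))\subseteq D_{2c-1}(\bar L)=0$. Therefore $D_{2c-1}(L)\subseteq\ker f=\gamma_{c+1}(L)+L''$, which is the general statement.

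For the special case, take $c=3$; this gives $D_5(L)\subseteq\gamma_4(L)+L''$. It remains to see that $L''\subseteq\gamma_4(L)$. Since $L'=\gamma_2(L)$, we have $[\gamma_2(L),\gamma_2(L)]\subseteq\gamma_4(L)$ by the standard inclusion $[\gamma_i(L),\gamma_j(L)]\subseteq\gamma_{i+j}(L)$. That inclusion follows by induction on $j$ using the Jacobi identity:
\[
[A,[B,L]]\subseteq[[A,B],L]+[[A,L],B].
\]
Alternatively, it follows from Theorem~\ref{thm:B}, since $\gamma_2(L)\subseteq D_2(L)$ gives $[\gamma_2(L),\gamma_2(L)]\subseteq[D_2(L),D_2(L)]\subseteq\gamma_4(L)$. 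Combining, $D_5(L)\subseteq\gamma_4(L)$.

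No step is a real obstacle. The only points needing care are that the quotient inherits both hypotheses of Theorem~\ref{thm:C}, and the inclusion $L''\subseteq\gamma_4(L)$, which Theorem~\ref{thm:B} handles directly.
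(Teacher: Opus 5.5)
Your proof is correct and follows the paper's argument: you pass to the quotient $L/(\gamma_{c+1}(L)+L'')$, which is metabelian of class at most $c$, then apply Theorem~\ref{thm:C} with functoriality, and finally use $L''\subseteq\gamma_4(L)$ for $c=3$. Your justification of that last inclusion, via $[\gamma_i(L),\gamma_j(L)]\subseteq\gamma_{i+j}(L)$ or via Theorem~\ref{thm:B}, supplies a detail the paper leaves implicit.
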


\begin{proof}
The quotient $L/(\gamma_{c+1}(L)+L'')$ is metabelian of class at most $c$.
Theorem~\ref{thm:C} and functoriality of the dimension series give the
first inclusion.  For $c=3$, the inclusion $L''\subseteq\gamma_4(L)$
gives the second assertion.
\end{proof}

\begin{Corollary}\label{cor:odd-metabelian}
\leanproof{\leanrepository/LieRings/Publication/Results.lean\#L217-L227}
Let $L$ be a metabelian Lie ring. Then
\[
                    D_{2c-1}(L)\subseteq\gamma_{c+1}(L),
                    \qquad c\geq2.
\]
\end{Corollary}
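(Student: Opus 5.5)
The plan is to pass to the quotient $\bar L=L/\gamma_{c+1}(L)$ and apply Theorem~\ref{thm:C} there. Since $L$ is metabelian, $\bar L$ is also metabelian: $\bar L''$ is the image of $L''=0$. Moreover, $\gamma_{c+1}(\bar L)$ is the image of $\gamma_{c+1}(L)$, which is zero, so $\bar L$ has nilpotency class at most $c$. For $c\geq2$, Theorem~\ref{thm:C} then gives $D_{2c-1}(\bar L)=0$.

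Next I would use functoriality of the dimension series, recorded in the preliminaries. Let $q\colon L\to\bar L$ be the quotient map. Then
\[
q\bigl(D_{2c-1}(L)\bigr)\subseteq D_{2c-1}(\bar L)=0,
\]
so $D_{2c-1}(L)\subseteq\ker q=\gamma_{c+1}(L)$. The same conclusion also follows directly from Corollary~\ref{cor:general-odd}, since $L''=0$ gives $\gamma_{c+1}(L)+L''=\gamma_{c+1}(L)$. Either route proves the displayed inclusion.

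The paper states that this corollary includes $D_\omega(L)=\gamma_\omega(L)$, so I would add that consequence as well. Intersecting the displayed inclusion over all $c\geq2$, and using that the series $D_n(L)$ is descending, gives
\[
D_\omega(L)=\bigcap_{c\geq2}D_{2c-1}(L)\subseteq\bigcap_{c\geq2}\gamma_{c+1}(L)=\gamma_\omega(L).
\]
The reverse inclusion $\gamma_\omega(L)\subseteq D_\omega(L)$ follows from \eqref{eq:lower-in-dimension}.

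I expect no real obstacle here. The only points needing care are that metabelianity and the class bound pass to the quotient, and that the index $2c-1$ runs over a cofinal set, which it does because $2c-1\to\infty$.
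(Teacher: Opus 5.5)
Your proposal is correct and is essentially the paper's argument. The paper deduces the inclusion from Corollary~\ref{cor:general-odd} using $L''=0$, and that corollary is itself proved exactly as you do: apply Theorem~\ref{thm:C} to the metabelian quotient of class at most $c$, then use functoriality of the dimension series. Your extra derivation of $D_\omega(L)=\gamma_\omega(L)$ likewise matches the paper's proof of Corollary~\ref{cor:omega}.
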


\begin{proof}
This follows from Corollary~\ref{cor:general-odd}, since $L''=0$.
\end{proof}

\Needspace{9\baselineskip}
\begin{Corollary}\label{cor:nilpotent-central-derived}
\leanproof{\leanrepository/LieRings/Publication/Results.lean\#L229-L237}
Let $c\geq2$ be an integer, and let $L$ be a nilpotent Lie ring of class at most $c$. Then
\[
                         D_{2c-1}(L)\subseteq L''\cap Z(L).
\]
\end{Corollary}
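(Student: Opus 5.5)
We prove the two inclusions $D_{2c-1}(L)\subseteq L''$ and $D_{2c-1}(L)\subseteq Z(L)$ separately.

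\emph{Inclusion into $L''$.} Since $L$ is nilpotent of class at most $c$, we have $\gamma_{c+1}(L)=0$. Corollary~\ref{cor:general-odd} then gives
\[
D_{2c-1}(L)\subseteq\gamma_{c+1}(L)+L''=L''.
\]
Equivalently, apply Theorem~\ref{thm:C} to the metabelian quotient $L/L''$, which is nilpotent of class at most $c$. By functoriality, $D_{2c-1}(L)$ maps into $D_{2c-1}(L/L'')=0$, so it lies in $L''$.

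\emph{Inclusion into $Z(L)$.} Theorem~\ref{thm:B} with $m=1$ gives $[D_n(L),L]=\gamma_{n+1}(L)$ for every $n\geq1$. Taking $n=2c-1$ yields
\[
[D_{2c-1}(L),L]=\gamma_{2c}(L).
\]
Since $c\geq2$, we have $2c\geq c+1$, so $\gamma_{2c}(L)\subseteq\gamma_{c+1}(L)=0$. Hence $[D_{2c-1}(L),L]=0$, which means $D_{2c-1}(L)\subseteq Z(L)$.

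Combining the two inclusions gives $D_{2c-1}(L)\subseteq L''\cap Z(L)$.

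This is a direct assembly of earlier results, so no step is a real obstacle. The only care needed is to confirm that Corollary~\ref{cor:general-odd} (or Theorem~\ref{thm:C} applied to $L/L''$) requires only the class bound, with no finite generation hypothesis. That holds because Theorem~\ref{thm:C} was already extended to arbitrary Lie rings via Lemma~\ref{lem:finitely-generated-reduction}.
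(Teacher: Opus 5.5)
Your proposal is correct and follows the paper's proof essentially verbatim: Corollary~\ref{cor:general-odd} (with $\gamma_{c+1}(L)=0$) gives $D_{2c-1}(L)\subseteq L''$, and Theorem~\ref{thm:B} gives $[D_{2c-1}(L),L]=\gamma_{2c}(L)=0$, so $D_{2c-1}(L)$ is central. Your alternative phrasing via Theorem~\ref{thm:C} applied to $L/L''$ is just the proof of Corollary~\ref{cor:general-odd} unwound, so it is also fine.
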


\begin{proof}
Corollary~\ref{cor:general-odd} gives
$D_{2c-1}(L)\subseteq L''$, while Theorem~\ref{thm:B} gives
\[
 [D_{2c-1}(L),L]=\gamma_{2c}(L)=0,
\]
so $D_{2c-1}(L)$ is central.
\end{proof}

\Needspace{10\baselineskip}
\begin{Corollary}\label{cor:omega}
\leanproof{\leanrepository/LieRings/Publication/Results.lean\#L239-L245}
Let $L$ be a metabelian Lie ring. Then \(D_\omega(L) = \gamma_\omega(L)\).
\end{Corollary}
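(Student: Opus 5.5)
The inclusion $\gamma_\omega(L)\subseteq D_\omega(L)$ is immediate from \eqref{eq:lower-in-dimension}: since $\gamma_n(L)\subseteq D_n(L)$ for every $n$, intersecting over all $n\geq1$ gives $\gamma_\omega(L)\subseteq D_\omega(L)$. The plan for the reverse inclusion is to use Corollary~\ref{cor:odd-metabelian}, which bounds odd-indexed dimension subrings of a metabelian Lie ring by terms of the lower central series.

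Let $x\in D_\omega(L)$ and fix an integer $n\geq 1$. It suffices to show $x\in\gamma_n(L)$. Choose $c=\max\{2,n\}$, so that $c\geq2$ and $c+1>n$. Then $x\in D_{2c-1}(L)$, because $D_\omega(L)$ is contained in every term of the dimension series. Corollary~\ref{cor:odd-metabelian} then gives
\[
x\in D_{2c-1}(L)\subseteq\gamma_{c+1}(L)\subseteq\gamma_n(L),
\]
where the last inclusion holds because the lower central series is descending and $c+1\geq n$. Since $n$ was arbitrary, $x\in\bigcap_{n\geq1}\gamma_n(L)=\gamma_\omega(L)$.

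There is no genuine obstacle here; the content is entirely in Corollary~\ref{cor:odd-metabelian}, and hence in Theorem~\ref{thm:C}. The only points to check are that $c$ can be chosen arbitrarily large subject to $c\geq2$, so that $\gamma_{c+1}(L)$ eventually lies in any prescribed $\gamma_n(L)$, and that no finiteness or nilpotency hypothesis on $L$ is needed. The latter holds because Corollary~\ref{cor:odd-metabelian} was stated for arbitrary metabelian $L$: it was derived through Corollary~\ref{cor:general-odd} by passing to the quotient $L/(\gamma_{c+1}(L)+L'')$, which is metabelian of class at most $c$.
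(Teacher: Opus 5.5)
Your proposal is correct and is essentially the paper's argument: the paper also combines \eqref{eq:lower-in-dimension} with Corollary~\ref{cor:odd-metabelian} and uses that the lower central series is descending. The only difference is presentation: the paper writes it as a chain of intersections $\bigcap_n\gamma_n(L)\subseteq\bigcap_n D_n(L)\subseteq\bigcap_{c\geq2}\gamma_{c+1}(L)=\bigcap_n\gamma_n(L)$, whereas you argue elementwise.
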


\begin{proof}
By~\eqref{eq:lower-in-dimension} and
Corollary~\ref{cor:odd-metabelian},
\[
 \bigcap_{n\geq1}\gamma_n(L)
 \subseteq\bigcap_{n\geq1}D_n(L)
 \subseteq\bigcap_{c\geq2}\gamma_{c+1}(L)
 =\bigcap_{n\geq1}\gamma_n(L).\qedhere
\]
\end{proof}

\section{Formal verification}\label{sec:formalization}

\subsection{Verification of proofs in Lean}

Most results in this paper, including theorems~\ref{thm:A}, \ref{thm:B}
and~\ref{thm:C}, have been formally verified in Lean~4.  Each formally
verified statement bears a clickable ``Lean 4'' badge at its upper right,
linking directly to the corresponding proof in the source
code~\cite{IoninPetrovLean2026}.  The formalization contains
no \texttt{sorry} placeholders or \texttt{axiom} declarations.

The formalization includes the integral PBW theorem in the form used in
\S~\ref{sec:preliminaries}, including the injectivity of
$L\to\U(L)$.  It also includes the results
\[
 D_2(L)=\gamma_2(L),
 \qquad
 D_3(L)=\gamma_3(L),
 \qquad
 2D_4(L)\subseteq\gamma_4(L),
\]
and the equality of the two series for free Lie rings,
following~\cite{BartholdiPassi2015}.

\subsection{Verification of computations in GAP}
\label{subsec:gap-verification}

The following code was run with GAP~4.14.0 and NQ~2.5.11.  It constructs
the group $M$ from \S~\ref{subsec:group-plateau}, verifies
$|a|=2^{14}$ and $w=a^{2^{13}}\neq1$, and constructs the epimorphism
$\varphi\colon\gamma_4(M)\to C_{2^{14}}$ used to define $K$.
In class at most four, the second derived subgroup is generated by
commutators of basic commutators of weight two; these are the additional
relators in the listing.  All assertions in the listing pass.

\begin{lstlisting}[style=gapcode,
  caption={Verification of the group in \S~\ref{subsec:group-plateau}.},
  label={lst:group-plateau}]
LoadPackage("nq");;
SetInfoLevel(InfoNQ, 0);
SetAssertionLevel(1);
F := FreeGroup(6);;
x := GeneratorsOfGroup(F);;
d := List([1..6], i -> LeftNormedComm([x[4],x[6],x[i]]));;
rels := [
  x[1]^8*d[3]^4*d[2],
  x[2]^64*d[3]^-16*d[1]^-1*d[5]^16,
  x[3]^16384*d[2]^16*d[1]^-4,
  x[5]^1024*d[2]^16,
  d[5]^2048, d[1]^16, d[2]^128,
  Comm(d[1],x[1])*Comm(d[2],x[2])^-8,
  Comm(d[2],x[2])^-8*Comm(d[3],x[3])^2048
];;
# In class four these relators impose metabelianity.
cs := [];;
for i in [1..6] do
  for j in [1..i-1] do Add(cs, Comm(x[i],x[j])); od;
od;
for i in [1..Length(cs)] do
  for j in [1..i-1] do Add(rels, Comm(cs[i],cs[j])); od;
od;
# Retain the images of the six original generators.
data := NqCallANU_NQ(rec(group := F/rels, class := 4));;
coll := NqInitFromTheLeftCollector(data);;
M := NqPcpGroupByCollector(coll, data);;
y := List(data.Images, g -> NqPcpElementByWord(coll,g));;
a := LeftNormedComm([y[4],y[6],y[3],y[3]]);;
w := Comm(y[1],y[2]^256)*Comm(y[1],y[3]^16384)
     *Comm(y[2],y[3]^32768);;
top := LowerCentralSeries(M)[4];;
Assert(1, Order(a) = 2^14 and w = a^(2^13));
# The Smith basis defines the projection onto a cyclic factor.
basis := Pcp(top, "snf");;
orders := RelativeOrdersOfPcp(basis);;
alpha := ExponentsByPcp(basis, a);;
j := First([1..Length(basis)],
           i -> orders[i] = 2^14 and IsOddInt(alpha[i]));;
C := CyclicGroup(IsPcpGroup, 2^14);;
z := GeneratorsOfGroup(C)[1];;
images := List(basis, b -> One(C));;
images[j] := z^(PowerModInt(alpha[j], -1, 2^14));;
phi := GroupHomomorphismByImages(top, C,
         GeneratorsOfPcp(basis), images);;
Assert(1, Image(phi,a) = z and Image(phi,w) = z^(2^13));
Print("order(a) = ", Order(a), ", order(w) = ", Order(w), "\n");
Print("phi(a) = z, phi(w) = z^8192\n");
QUIT_GAP(0);
\end{lstlisting}

The output is
\begin{verbatim}
order(a) = 16384, order(w) = 2
phi(a) = z, phi(w) = z^8192
\end{verbatim}

\newcommand{\doilink}[1]{\href{https://doi.org/#1}{doi:#1}}

\end{document}